\documentclass[11pt]{amsart}
\usepackage[margin=30mm]{geometry}
\usepackage[T1]{fontenc}
\usepackage{lmodern}
\usepackage{amsmath,amssymb,amsthm}
\usepackage{microtype}
\usepackage{setspace}
\usepackage[hidelinks]{hyperref}
\newtheorem{theorem}{Theorem}[section]
\newtheorem{lemma}[theorem]{Lemma}
\newtheorem{proposition}[theorem]{Proposition}

\theoremstyle{definition}
\newtheorem{definition}[theorem]{Definition}
\theoremstyle{remark}

\numberwithin{equation}{section}
\newcommand{\R}{\mathbb R}
\newcommand{\HH}{\mathcal H}
\newcommand{\LL}{\mathcal L}
\newcommand{\cof}{\operatorname{cof}}
\newcommand{\tr}{\operatorname{tr}}
\newcommand{\epi}{\operatorname{epi}}
\newcommand{\diag}{\operatorname{diag}}
\newcommand{\cc}{C_{\mathrm{cov}}}

\title[The singular set in optimal transport] {Hausdorff dimension of the singular set in optimal transport}

\author{Shibing Chen}
\address{}
\email{chenshibing1982@hotmail.com}

\author{Qi-Rui Li}
\address{School of Mathematical Sciences, Zhejiang University, Hangzhou 310058, China}
\email{qi-rui.li@zju.edu.cn}

\author{Yuanyuan Li}
\address{Institute for Theoretical Sciences, Westlake University, Hangzhou, 310030, China}
\email{lyyuan@westlake.edu.cn}

\author{Jiakun Liu}
\address{School of Mathematics and Statistics, The University of Sydney, Camperdown, NSW 2006, Australia}
\email{jiakun.liu@sydney.edu.au}

\author{Xiaotian Wu}
\address{Institute for Theoretical Sciences, Westlake University, Hangzhou, 310030, China}
\email{wuxiaotian@westlake.edu.cn}
\date{\today}
\subjclass[2020]{35J96, 49Q22}
\keywords{Optimal transport, Monge-Amp\`ere equation, singular set, Hausdorff dimension}

\begin{document}

\begin{abstract}
We study optimal transport between bounded smooth domains in $\R^n$, with smooth positive densities bounded above and below. Without convexity assumptions on the domains, we prove that the optimal map is smooth outside a relatively closed set of Hausdorff dimension at most $n-1$.
\end{abstract}
\maketitle

\section{Introduction}\label{introduction}

Let $\Omega,\Omega^*\subset\R^n$, $n\ge2$, be bounded open sets. Suppose  $f, g$ are  probability densities concentrated on $\Omega, \Omega^*$ respectively.
Let $\lambda,\Lambda$ be their lower and upper bounds
\begin{equation}\label{densities}
0<\lambda\le f,g\le\Lambda<\infty,\ \text{in their respective domains.}
\end{equation}

Optimal transport problem asks for finding the cheapest way to transport mass from $\Omega$ to $\Omega^*,$ namely solving the following minimization
problem:
$$\inf_{T_\sharp(fdx)=gdy}\int_{\Omega}\frac{1}{2}|x-Tx|^2f(x)dx,$$
where $T_\sharp fdx=gdy$ means that $\int_{T^{-1}A}f=\int_A g$ for any Borel set $A\subset \Omega^*.$
By Brenier's theorem \cite{B87, B91}, there exists a unique optimal map $T$. 
Moreover, $T=Du$ for some convex function $u.$ Hence,
\begin{equation}\label{transport}
(Du)_\#(fdx)=gdy.
\end{equation}
We always use the globally Lipschitz representative of $u$ described in Section 2.

The regularity theory for convex targets was developed by Caffarelli~\cite{C90,C91,C92}. For nonconvex targets the optimal map can be discontinuous even for smooth positive densities. Yu~\cite{Y07} studied the nondifferentiability set of planar transport potentials with a convex source and a nonconvex target. Under mild additional conditions, away from a countable set it is a disjoint union of countably many $C^1$ curves. Figalli~\cite{F10} proved partial regularity for general nonconvex planar domains. Figalli and Kim~\cite{FK10} established partial regularity in every dimension without convexity of the supports: $\{x\in \Omega: \partial u(x)\cap\Omega^*\ne\varnothing\}$ is an open set with full measure, and $u$ is smooth and strictly convex there when the densities are smooth. De Philippis and Figalli~\cite{DPF15} extended this to general costs. A very natural open problem was raised in \cite{DPF14}: Can we obtain a finer estimate for the size of the singular set?

The singularities of convex functions have been extensively studied. We recall two results particularly relevant to the present work.
Zaj\'i\v{c}ek~\cite{Z79} proved that the nondifferentiability set of a convex function has Hausdorff dimension at most $n-1$. Mooney~\cite{Mo15} proved that an Alexandrov solution of $\det D^2u\ge 1$ is strictly convex outside a set of zero $\HH^{n-1}$-measure. 

For optimal transport problem, under additional geometric assumptions, finer descriptions are known: see Kitagawa and McCann~\cite{KM19} for separated target components, Chen and Liu~\cite{CL25} for two disjoint convex targets, and Chen, Li and Liu~\cite{CLL26} for nonconvex polygonal targets in the plane, and Chen et al.~\cite{CLLLW26} for nonconvex polyhedral targets in arbitrary dimensions.
In the current paper, $\partial\Omega^*$ is an arbitrary smooth hypersurface.

 Let
\[
\Omega'=\bigl\{x\in\Omega:\exists\,r>0,\ B_r(x)\subset\Omega, \ u\in C^\infty(B_r(x)),\ D^2u>0\text{ on }B_r(x)\bigr\}
\]
be the \emph{regular set}. The \emph{singular set} is $\Sigma=\Omega\setminus\Omega'$. The main result of this paper is the following Hausdorff dimension estimate of
the singular set.

\begin{theorem}\label{main}
Let $\Omega,\Omega^*\subset\R^n$ be bounded connected open sets with $C^\infty$ boundary, $n\ge2$. Suppose $f\in C^\infty(\Omega)$ and $g\in C^\infty(\Omega^*)$ 
are probability densities satisfying \eqref{densities}. Then the potential $u$ in \eqref{transport} satisfies
\[
\dim_{\mathrm H}\Sigma\le n-1.
\]
\end{theorem}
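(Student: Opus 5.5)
The strategy is to split the singular set into the points where $u$ fails to be differentiable and the points where $u$ is differentiable but its gradient misses the interior of the target. By Figalli--Kim \cite{FK10}, the set $\Omega_0:=\{x\in\Omega:\partial u(x)\cap\Omega^*\neq\varnothing\}$ is open and $u$ is smooth and strictly convex there. Hence $\Omega_0\subset\Omega'$ and
\[
\Sigma\subset\{x\in\Omega:\partial u(x)\subset \R^n\setminus\Omega^*\}.
\]
With the globally Lipschitz representative (so that $\partial u(x)\subset\overline{\Omega^*}$ for every $x$), this gives $\Sigma\subset S_1\cup S_2$. Here $S_1$ is the nondifferentiability set of $u$, and
\[
S_2=\{x\in\Omega: u \text{ differentiable at } x,\ Du(x)\in\partial\Omega^*\}.
\]
By Zaj\'i\v{c}ek \cite{Z79}, $\dim_{\mathrm H} S_1\le n-1$. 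It therefore suffices to prove $\dim_{\mathrm H}S_2\le n-1$.

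For $S_2$ I would look at the Legendre dual $v=u^*$, which solves the reverse transport problem from $g$ on $\Omega^*$ to $f$ on $\Omega$. If $x\in S_2$ with $y=Du(x)\in\partial\Omega^*$, then $x\in\partial v(y)$. The points $x$ with $\partial v(y)$ a singleton for some $y$ form the image under $Dv$ of the set $\partial\Omega^*$, which is an $(n-1)$-dimensional smooth hypersurface. Since $Dv$ may fail to be Lipschitz there, I would not use that directly. Instead I would show that near $\partial\Omega^*$ the map $v$ enjoys a one-sided regularity. The plan is to apply Caffarelli-type localization to sections $S_h(v,y)$ for $y\in\partial\Omega^*$ and prove a uniform $C^{1,\alpha}$ estimate for $v$ up to $\partial\Omega^*$ along the differentiable points. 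Concretely, at a point $x_0\in S_2$ with $y_0=Du(x_0)$, I would blow up: rescale sections of $u$ at $x_0$ and use that $u$ is differentiable at $x_0$, together with the smoothness of $\partial\Omega^*$ (locally a graph, flat after rescaling). The limit should be an optimal transport from a region onto a half-space with constant densities. A Liouville-type rigidity would force the limit to be a one-dimensional profile, namely $u$ behaving like $\tfrac12 (x\cdot e)_+^2$-type plus affine. Compactness then gives that, in a neighborhood, $S_2$ lies in a Lipschitz graph transversal to $e$, or more loosely that $S_2$ satisfies a cone condition. Such a condition implies $S_2$ is locally contained in countably many Lipschitz graphs, hence has $\HH^{n-1}$-$\sigma$-finite measure and dimension at most $n-1$.

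An alternative route for $S_2$ is to cover it by relating it to $S_1$-like structures. One could prove that each $x\in S_2$ is a limit point of $S_1$ or lies on the boundary of $\Omega_0$ in a "flat" way, and then use a porosity argument: if $\Omega_0$ contains a cone of uniform aperture at every point of $S_2$ at all small scales, then $S_2$ is porous and $\dim_{\mathrm H}\le n-1$ follows from standard porosity estimates. Porosity would come from the quantitative statement that sections $S_h(u,x_0)$ intersect $\Omega_0$ in a set of proportional measure. That proportion comes from the mass balance: $g$ gives mass at least proportional to $|Du(S_h)\cap\Omega^*|$, and the density bounds give it via Alexandrov estimates. Engulfing and the John ellipsoid normalization make the cone of uniform aperture inside the section.

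The main obstacle will be this quantitative step, namely obtaining uniform nondegeneracy (cone or porosity) of the regular set near points of $S_2$ without convexity of $\Omega^*$. Sections centered at such points may be very eccentric, and the normalized picture must still see a nontrivial portion of $\Omega^*$. I would first try to control eccentricity using the smoothness of $\partial\Omega^*$: after normalization the target becomes nearly a half-space. Then I would use Caffarelli's strict convexity argument for convex (half-space) targets to rule out degenerate sections.
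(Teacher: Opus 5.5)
Your reduction matches the paper's. Figalli--Kim gives $\Sigma=A\cup E$ (your $S_1\cup S_2$), Zaj\'i\v{c}ek handles $A$, and everything rests on $\dim_{\mathrm H}E\le n-1$, which is Proposition~\ref{boundarydimension}. There is a minor slip: for the Lipschitz extension, $\partial u(x)$ lies only in the closed convex hull of $\Omega^*$. What is true, and all you need, is $Du(x)\in\overline{\Omega^*}$ at differentiability points.

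For $E$, however, neither of your routes reaches the exponent $n-1$. In the blow-up route, the claim that the normalized target is nearly a half-space is unjustified. Nothing prevents sections at points of $E$ from being extremely eccentric, with short semiaxes far smaller than the square of the long one, and then the curvature of $\partial\Omega^*$ survives the rescaling. This is why Lemma~\ref{taylorlimit} only produces a limit polynomial $P$ of degree up to $d_0>(1+\alpha)/\alpha$. It is also why the paper does not rule out eccentricity but exploits it: in the zero-mass case $T_j/\beta_j\to e^*\otimes e$. Moreover, the normalized Monge--Amp\`ere density carries the factor $\kappa_j=(\det T_j)^2/h_j^n$, which is bounded below but in general not above. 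So there is no limiting transport problem with constant densities. Differentiability at $x_0$ also gives no uniform rate for compactness; this is why the paper first removes $Z_\alpha$, of zero $\HH^{n-1+\alpha}$-measure, to get $c|x'|^{2/\alpha}\le w_x(x')\le C|x'|^{1+\alpha}$ uniformly on countably many pieces. Even a uniform $C^{1,\alpha}$ bound for $v$ along $\partial\Omega^*$ would only give $\dim_{\mathrm H}E\le(n-1)/\alpha$, since $E\subset\partial v(\partial\Omega^*)$. Finally, qualitative Liouville rigidity of blow-ups does not yield cone conditions at a uniform scale. The Lipschitz-graph structure you aim for is a rectifiability statement strictly stronger than the theorem, and nothing in the sketch produces it.

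The porosity route fails for two independent reasons. First, porosity with a fixed constant gives only $\dim_{\mathrm H}\le n-c$ for some $c>0$. For example, $\mathcal C\times[0,1]^{n-1}$, with $\mathcal C$ the middle-thirds Cantor set, is porous and has dimension $n-1+\log2/\log3$. Second, your source of porosity is vacuous: $\Omega'$ has full measure, so every section meets it in proportional measure, and no statement of this kind yields a $\Sigma$-free ball.

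What is missing is a mechanism that pays for eccentric sections. The paper compares finite weighted measures with densities $|F|^{-1+\epsilon}$ on the target and source sides against the covering cost $h^{n-1}/(a_2\cdots a_n)$ of the source ellipsoid. When $n\ge3$ it adds a measure on the subgradient graph, built via Moreau regularization. Where the normalized mass stays positive, the Macbeath-region selection of Lemma~\ref{selectionlemma} gives $\dim_{\mathrm H}\le n-1+2\epsilon$. Where it tends to zero, the normalized limit is rank one, and either $P(Y)=p(e\cdot Y)$ or $D_e\tilde u$ takes finitely many values. Fenchel's inequality on the affine intervals of $t\mapsto\tilde u(te)$ then gives $r_{\tau h}(x')\ge c\theta r_h(x)$ outside finitely many thin slabs. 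This drives the contraction $M\le M/2$ of a weighted Hausdorff content in Proposition~\ref{zeromass}.
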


The exponent $n-1$ is sharp, as a symmetric version of Caffarelli's smooth dumbbell construction~\cite{C92} yields an $(n-1)$-dimensional set of non-differentiability points for connected smooth domains and constant densities.

Let
\begin{equation}\label{singularsets}
A=\{x\in\Omega:u\text{ is not differentiable at }x\} \quad\text{and}\quad E=\{x\in\Omega\setminus A:Du(x)\in\partial\Omega^*\}.
\end{equation}
By the regularity theory of Figalli and Kim (Lemma~\ref{regularset}), $\Sigma=A\cup E$, and $\dim_{\mathrm H}A\le n-1$ by Zaj\'i\v{c}ek~\cite[Theorem~1]{Z79}. Hence Theorem~\ref{main} is equivalent to the following statement, which is the key of the paper.

\begin{proposition}\label{boundarydimension}
Under the hypotheses of Theorem~\ref{main},
\[
\dim_{\mathrm H}E\le n-1.
\]
\end{proposition}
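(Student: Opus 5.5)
We prove the statement by duality with the Legendre transform. Let $v=u^*$ be the conjugate potential, so $Dv$ is the optimal map from $g\,dy$ back to $f\,dx$. A point $x\in E$ is a differentiability point of $u$ with $y=Du(x)\in\partial\Omega^*$. Hence $x\in\partial v(y)$ with $y$ on the smooth hypersurface $\partial\Omega^*$. The plan is to show that the set $E$ is the image of the $(n-1)$-dimensional set $\partial\Omega^*$ under a map that does not increase dimension. In spirit, $E\subset\bigcup_{y\in\partial\Omega^*}\partial v(y)$. The subdifferentials are convex sets, however, so we must control the contribution of the possibly large sets $\partial v(y)$.

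\textbf{Step 1: stratify by the dimension of $\partial v(y)$.} For $y\in\partial\Omega^*$ set $k(y)=\dim\partial v(y)$. We expect that a point $x\in E$ cannot lie in the relative interior of a $k$-dimensional face $\partial v(y)$ with $k\ge1$ unless the face is transversal in a controlled way. The reason is that $u$ is affine on the convex set $\{x:Du(x)=y\}\supset\partial v(y)\cap\Omega$, with $u$ differentiable at $x$.

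Here we use the following mass-balance argument. Since $Du_\#f=g$ and $g$ is bounded below on $\Omega^*$, Caffarelli-type doubling near regular points forces the following: near the affine set where $Du\equiv y\in\partial\Omega^*$, the image $\partial u(B_r(x))$ must have measure comparable to $|B_r|$ inside $\overline{\Omega^*}$. Because $\Omega^*$ is smooth, it looks like a half-space at small scales. Using the Figalli--Kim lemma (Lemma~\ref{regularset}) and the standard estimate for sections, we aim to show that $u$ restricted to lines through such $x$ transverse to the face is $C^{1,\alpha}$ with a one-sided structure.

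\textbf{Step 2: a Lipschitz graph covering.} We cover $E$ by countably many sets $E_j$, each contained in a Lipschitz image of a subset of $\partial\Omega^*$ times a subset of a $k$-dimensional face parametrization. The key quantitative claim is the following. If $x\in E$ and $Du(x)=y$, then $\partial v$ is single-valued at $y$ in directions tangent to $\partial\Omega^*$, up to an $(n-1)$-dimensional family.

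Concretely, we will use Zaj\'i\v{c}ek's theorem again, now for $v$ extended convexly to $\R^n$. The points $y\in\partial\Omega^*$ where $\partial v(y)$ has dimension $\ge1$ form a set $S$ with $\dim_{\mathrm H}S\le n-1$, but we need the union of the faces. We instead apply the finer statement that the set of $x$ with $\dim\partial u(x)\ge k$ is $(n-k)$-rectifiable, with the roles of $u$ and $v$ interchanged. For $y$ with $\partial v(y)=\{x\}$ a singleton, the map $y\mapsto x=Dv(y)$ is defined on $\partial\Omega^*$. We prove it is locally Lipschitz on pieces, or at least Hölder with exponent arbitrarily close to $1$, using interior $C^{1,\alpha}$ estimates for $v$ near boundary points of $\Omega^*$ where the source is locally convex relative to the target. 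Then $\dim_{\mathrm H}Dv(\partial\Omega^*\setminus S)\le(n-1)/\alpha$, and letting $\alpha\to1$ finishes this part.

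\textbf{Main obstacle.} The hardest step is controlling points $x\in E$ lying on nontrivial faces $\partial v(y)$, $y\in\partial\Omega^*$, where $v$ is not strictly convex. Even the $(n-1)$-dimensional set of such $y$, multiplied by $k$-dimensional faces, could a priori give dimension $n-1+k$. We would first try to prove a dimension trade-off: if $\dim\partial v(y)=k$ for $y$ in a set $S_k$, then $\dim_{\mathrm H}S_k\le n-1-k$. This would follow by adapting Mooney's argument \cite{Mo15} to the half-space geometry at $\partial\Omega^*$. There, faces of $v$ must propagate to the boundary of the target, and the density lower bound prevents too many of them. Combining $\dim S_k+k\le n-1$ with a Lipschitz parametrization of $\bigcup_{y\in S_k}\partial v(y)$ yields $\dim_{\mathrm H}E\le n-1$.
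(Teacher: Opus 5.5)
Your reduction $E\subset\bigcup_{y\in\partial\Omega^*}\partial v(y)$ is correct. Your idea of removing nontrivial faces by a Mooney-type argument is in line with the paper, which does it more cheaply than your trade-off $\dim S_k\le n-1-k$. By Lemma~\ref{growth}, outside a set $Z_\alpha$ with $\HH^{n-1+\alpha}(Z_\alpha)=0$, the supporting plane touches the graph only at $x$, so $\partial v(Du(x))=\{x\}$ and no faces remain.

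The genuine gap is Step~2: the claim that $y\mapsto Dv(y)$ is H\"older on pieces of $\partial\Omega^*$ with exponent arbitrarily close to $1$. Nothing in the setting supports this.
\begin{itemize}
\item Neither domain is convex and there is no local convexity to invoke, so Caffarelli's strict convexity and $C^{1,\alpha}$ theory for $v$ is unavailable. Even where it applies, it yields only some small $\alpha$.
\item Near $y\in\partial\Omega^*$, $\mu_v$ vanishes on the complement of $\Omega^*$ (cf.\ \eqref{dualequation}), and there $\partial v$ takes values in the $(n-1)$-dimensional set $A$. So $v$ is degenerate exactly where you need the estimate.
\item What one can prove is the pointwise growth \eqref{growthbounds}. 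Adding the lower bounds $w_x(x')\ge c|x'|^{2/\alpha}$ at two points of a piece gives $|Dv(y)-Dv(y')|\le C|y-y'|^{\alpha/(2-\alpha)}$, but only off $Z_\alpha$, which is merely $\HH^{n-1+\alpha}$-null. Pushing the exponent toward $1$ forces $\alpha\to1$, which makes the exceptional set of dimension nearly $n$. Balancing the two gives a bound strictly between $n-1$ and $n$, never $n-1$.
\item Your ``Lipschitz parametrization'' of $\bigcup_{y\in S_k}\partial v(y)$ fails for the same reason: parametrizing by $y$ is exactly as irregular as $Dv$.
\item The heuristic that $\partial\Omega^*$ looks like a half-space at small scales is wrong at the relevant scale. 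The sections $S_h[v](y,x)$ can be extremely eccentric, and after normalizing by $T_h$ the boundary converges to the zero set of a possibly nonlinear polynomial (Lemma~\ref{taylorlimit}), not to a hyperplane.
\end{itemize}

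The missing idea is that isotropic control of $Dv$ is the wrong tool; you have to exploit the anisotropic geometry of sections. The paper never parametrizes $E$ by $\partial\Omega^*$. It covers $E$ directly by source ellipsoids $x+hT_h^{-\mathsf{T}}B_1$, whose $(n-1)$-dimensional covering cost is $h^{n-1}/(a_2\cdots a_n)$. It compares this cost with finite measures weighted by $|F|^{-1+\epsilon}$, plus, for $n\ge3$, a weighted measure on the subgradient graph built from the Moreau regularization. The argument then splits according to the normalized mass $\Theta_h$.
\begin{itemize}
\item Where $\Theta_h$ has positive $\limsup$, a Macbeath-region selection (Lemma~\ref{selectionlemma}) gives dimension at most $n-1+2\epsilon$.
\item Where $\Theta_h$ tends to zero, blow-ups are rank one. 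Either the limiting boundary polynomial depends only on $e\cdot Y$, or $D_e\tilde u$ takes finitely many values. This forces $\tilde u$ to be affine along the limiting line away from finitely many slabs (Proposition~\ref{refinement}). That yields the self-improving Hausdorff content estimate of Proposition~\ref{zeromass}.
\end{itemize}
Your proposal has no substitute for this anisotropic analysis, and without it Step~2 cannot be carried out.
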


We outline the proof of Proposition~\ref{boundarydimension}. For $0<\alpha<1$, a density bound for $\mu_{u+|\cdot|^2/2}$ yields
\[
c|x'|^{2/\alpha}\le u(x+x')-u(x)-Du(x)\cdot x'\le C|x'|^{1+\alpha}
\]
outside a set of zero $\HH^{n-1+\alpha}$-measure. 
These estimates allow us to localize a fixed dual potential \(v\) and obtain uniform control of its sections centered at \(Du(x)\), for \(x\) in a suitable countable family of subsets of \(E\). If a section of $v$ of height $h$ has semiaxes $a_1\ge\cdots\ge a_n$, the associated source ellipsoid has semiaxes $h/a_i$ and covering cost
\[
C_{\mathrm{cov}}=\frac{h^{n-1}}{a_2\cdots a_n}.
\]
Comparing this cost with finite weighted measures separates the positive upper mass limit from the zero mass limit. A selection argument treats the former. In the latter, two volume estimates in dimension two, and an additional measure on the subgradient graph in higher dimensions, yield a rank-one limit. The limiting boundary polynomial depends on one variable, or one component of the limiting source gradient takes finitely many values. On the limiting source line, all limiting positions arising from
points of $E$, except for finitely many interval endpoints, lie
in the interiors of intervals on which
$\psi(t)=\tilde{u}(te)$ is affine. Fenchel's inequality then implies
\[
r_{\tau h}(x')\ge c\theta r_h(x)
\]
outside finitely many slabs of relative width $\theta$. Covering the slabs by small balls and replacing $h$ by $\tau h$ on the complement contracts the total weight $r^{n-1+\delta/2}h^{\delta/(2N)}$. The corresponding supremum of normalized Hausdorff content satisfies $M\le M/2<\infty$, hence vanishes.

Section~\ref{preliminaries} records the preliminary facts, and Section~\ref{localization} establishes the local growth and section estimates. Section~\ref{measures} proves the selection estimates and fixes the subsets used in the measure comparisons. Section~\ref{planar} treats the planar limit and the covering argument used in every dimension. Section~\ref{higherdimensions} introduces the graph measure and completes the higher-dimensional proof.

\section{Preliminaries}\label{preliminaries}

For a proper lower semicontinuous convex function $w$ and $x\in\operatorname{dom}w$, let
\[
\partial w(x)=\{p\in\R^n:w(x')\ge w(x)+p\cdot(x'-x)\ \forall x'\} \quad\text{and}\quad w^*(p)=\sup_x\{p\cdot x-w(x)\},
\]
and $\epi w=\{(x,t):t\ge w(x)\}$. Then $w^{**}=w$ and
\begin{equation}\label{duality}
p\in\partial w(x) \iff x\in\partial w^*(p) \iff w(x)+w^*(p)=x\cdot p.
\end{equation}

For a finite convex function $w$ on an open convex set, its Monge-Amp\`ere measure
\[
\mu_w(\mathcal A)=|\partial w(\mathcal A)|
\]
is a locally finite Borel measure, weakly continuous under local uniform convergence~\cite[Sections~2.1--2.2]{F17}. By Alexandrov's theorem, at almost every $x$ there is a symmetric matrix $D^2w(x)\ge0$ with
\[
w(x+x')=w(x)+Dw(x)\cdot x'+\tfrac12x'^{\mathsf{T}}D^2w(x)x'+o(|x'|^2),
\]
and the absolutely continuous part of $\mu_w$ has density $\det D^2w$. The distributional Hessian of $w$ is a nonnegative matrix-valued Radon measure. Its trace $\Delta w$ has absolutely continuous part $\tr D^2w$.

Let $u$ be a convex solution to \eqref{transport}.
We extend $u$ to $\R^n$ as follows
$$\tilde u(x):=\sup\{L(x): L\text{ is affine},\ L\le u\text{ in }\Omega,\ DL\in\Omega^*\}.$$
 The extension $\tilde u$ is finite and Lipschitz and agrees with $u$ on $\Omega$. 
We henceforth denote this extension by $u$. 
By mass balance, we have that $Du\in\overline{\Omega^*}$ at every differentiability point in $\Omega$.

The following decomposition follows from Figalli and Kim~\cite[Proposition~2, the subsequent discussion, and Theorem~3.3]{FK10} and Zaj\'i\v{c}ek~\cite[Theorem~1]{Z79}.

\begin{lemma}\label{regularset}
Under the hypotheses of Theorem~\ref{main},
\begin{equation}\label{decomposition}
\Omega'=\{x\in\Omega\setminus A:Du(x)\in\Omega^*\} \quad\text{and}\quad \Sigma=A\cup E.
\end{equation}
The set $A$ lies in countably many $(n-1)$-dimensional Lipschitz graphs, and $\dim_{\mathrm H}A\le n-1$.
\end{lemma}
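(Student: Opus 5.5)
The plan is to prove the two inclusions defining $\Omega'$ separately, deduce the formula for $\Sigma$ by taking complements, and quote Zaj\'i\v{c}ek for the statement about $A$. Throughout, $u$ denotes the Lipschitz extension $\tilde u$ of Section~\ref{preliminaries}. I will use the fact recorded there that $Du(x)\in\overline{\Omega^*}$ at every differentiability point $x\in\Omega$. The deep analytic input, namely Caffarelli-type regularity away from $\partial\Omega^*$, will be taken from Figalli--Kim~\cite{FK10}.

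\emph{Inclusion $\Omega'\subset\{x\in\Omega\setminus A:Du(x)\in\Omega^*\}$.} Let $x\in\Omega'$. Then $u$ is smooth on some $B_r(x)$, so in particular $x\notin A$. Since $D^2u>0$ on $B_r(x)$, the inverse function theorem shows that $Du(B_r(x))$ is an open set. It is contained in $\overline{\Omega^*}$, because every point of $B_r(x)$ is a differentiability point. Since $\partial\Omega^*$ is a smooth hypersurface, the interior of $\overline{\Omega^*}$ equals $\Omega^*$. Hence $Du(B_r(x))\subset\Omega^*$, and in particular $Du(x)\in\Omega^*$.

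\emph{Inclusion $\{x\in\Omega\setminus A:Du(x)\in\Omega^*\}\subset\Omega'$.} This is the main step. Let $x\notin A$ with $p=Du(x)\in\Omega^*$, and choose $\varepsilon$ with $B_{2\varepsilon}(p)\subset\Omega^*$.
\begin{enumerate}
\item The subdifferential map is upper semicontinuous and $\partial u(x)=\{p\}$. So for $r$ small we get $\partial u(B_r(x))\subset B_\varepsilon(p)$. Thus near $x$ the gradient image lies in a convex region where $g$ is bounded above and below.
\item Mass balance \eqref{transport} then gives the two-sided bound $\lambda/\Lambda\le\mu_u\le\Lambda/\lambda$ on $B_r(x)$. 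Indeed, for $\mathcal A\subset B_r(x)$, the set $\partial u(\mathcal A)$ lies inside $\Omega^*$, and up to null sets $\mathcal A=(Du)^{-1}(\partial u(\mathcal A))$.
\item This is exactly the setting of \cite[Proposition~2 and the subsequent discussion]{FK10}. It gives that $u$ is strictly convex and $C^{1,\alpha}$ near $x$. The input is the Alexandrov bound on $\mu_u$ together with the fact that the relevant part of the target lies in a convex set, which rules out the Caffarelli obstruction to strict convexity.
\item With $f,g\in C^\infty$, \cite[Theorem~3.3]{FK10} and the standard Caffarelli $C^{2,\alpha}$ theory followed by Schauder bootstrapping give $u\in C^\infty$ and $D^2u>0$ near $x$. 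Hence $x\in\Omega'$.
\end{enumerate}
The delicate point is step~3, which requires a localization of the problem to a section whose image avoids $\partial\Omega^*$. I will cite it rather than reprove it.

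\emph{The decomposition of $\Sigma$ and the statement about $A$.} Taking complements in \eqref{decomposition}, a point $x\in\Omega$ lies in $\Sigma$ if and only if either $x\in A$, or $x\notin A$ and $Du(x)\notin\Omega^*$. In the second case $Du(x)\in\overline{\Omega^*}\setminus\Omega^*=\partial\Omega^*$, so $x\in E$. Conversely $A\cup E\subset\Sigma$ by the inclusions already proved, so $\Sigma=A\cup E$. Finally, \cite[Theorem~1]{Z79} states that the nondifferentiability set of a convex function on $\R^n$ is covered by countably many DC hypersurfaces. Each of these is locally an $(n-1)$-dimensional Lipschitz graph over some hyperplane. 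Such graphs have Hausdorff dimension $n-1$, so countable stability of Hausdorff dimension gives $\dim_{\mathrm H}A\le n-1$.
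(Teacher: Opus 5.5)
Your proposal is correct and follows the paper, which simply attributes the decomposition to Figalli--Kim and the statement about $A$ to Zaj\'i\v{c}ek; you additionally write out the elementary inclusion $\Omega'\subset\{x\in\Omega\setminus A:Du(x)\in\Omega^*\}$ and the complement argument giving $\Sigma=A\cup E$, and both are fine. One caution on step~3: for $n\ge3$, the local two-sided bound on $\mu_u$ in $B_r(x)$ together with $\partial u(B_r(x))\subset B_\varepsilon(p)$ does not by itself force strict convexity (Pogorelov's example satisfies both), so what you are actually invoking is the global Figalli--Kim result for the Brenier potential, which uses for instance the bound $\mu_u\ge(\lambda/\Lambda)\mathbf 1_\Omega$ on all of $\R^n$, namely that $u$ is strictly convex and $C^{1,\alpha}$ near every $x$ with $\partial u(x)\cap\Omega^*\neq\varnothing$.
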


Let $w$ be a proper lower semicontinuous convex function, suppose $x_0\in \partial w(y_0)$, and $h>0$. The \emph{section} of $w$ at $(y_0,x_0)$ of height $h$ is
\begin{equation}\label{sections}
S_h[w](y_0,x_0)=\{y:w(y)-w(y_0)-x_0\cdot(y-y_0)<h\}.
\end{equation}
At a differentiability point, write $S_h[w](y_0)=S_h[w](y_0,Dw(y_0))$. The section is \emph{proper} if its closure is compactly contained in the open set on which $w$ is considered.

By John's theorem~\cite[Theorem~III]{J48}, a bounded convex set $\mathcal S$ with nonempty interior has a \emph{normalizing ellipsoid} $z+TB_1$ with
\begin{equation}\label{normalization}
z+TB_1\subset\mathcal S\subset z+nTB_1 \quad\text{and}\quad \det T>0.
\end{equation}
We call $T$ a \emph{normalizing matrix} of $\mathcal S$. Up to a rotation of coordinates, $T$ is $\diag(a_1,\ldots,a_n)$, with $a_1\ge\cdots\ge a_n>0$.  Set $\beta=a_1=\|T\|$, where $\|T\|$ is the operator norm.

The following two estimates will be used later. If $w$ is convex on a bounded convex domain and $\mu_w\ge c_0\,dx$, then every section $S_h$, not necessarily proper, satisfies \cite[Lemma~2.2]{Mo15}
\begin{equation}\label{sectionvolume}
|S_h|\le C(n,c_0)h^{n/2}.
\end{equation}
For any proper section of a finite convex function, the Alexandrov estimate~\cite[Lemma~2.5]{Mo15} implies
\begin{equation}\label{sectionmass}
\mu_w(S_h)\ge c_nh^n/|S_h|.
\end{equation}

\section{Localization and section estimates}\label{localization}

We first remove one exceptional set and localize the dual potential. On the remaining points, uniform power bounds for $u$ yield uniform estimates for its dual sections. We use only the density bounds in \eqref{densities} until Subsection~\ref{taylor}.

The argument in Caffarelli~\cite[Lemma~2]{C92}, as recalled in \cite[Section~3]{FK10}, yields
\begin{equation}\label{sourcemeasure}
\mu_u\ge(\lambda/\Lambda)\mathbf1_\Omega\,dx\qquad\text{in }\R^n.
\end{equation}

\subsection{Pointwise growth}
{~}

Choose a ball $B_R$ containing $\overline\Omega$ and consider
\[
\widetilde\mu=\mu_{u+|\cdot|^2/2}.
\]
This measure is finite on $B_R$ and satisfies $\widetilde\mu\ge\Delta u$. Indeed, for smooth convex functions,
\[
\det(I+D^2u)\ge\tr D^2u.
\]
Mollification and weak convergence of Monge-Amp\`ere measures~\cite[Section~2.2]{F17} extend this inequality to convex $u$. For $0<\alpha<1$, define
\[
Z_\alpha=\left\{x\in\Omega: \limsup_{r\downarrow0}\frac{\widetilde\mu(B_r(x))}{r^{n-1+\alpha}}=\infty\right\}.
\]
For a differentiability point $x$, write
\[
w_x(x')=u(x+x')-u(x)-Du(x)\cdot x'.
\]
The measure $\widetilde\mu$ controls the lower growth through section volumes and the upper growth through $\Delta u$. For the lower bound we use the section argument of Mooney~\cite[Section~3]{Mo15}, keeping track of the distance to one point of the section.

\begin{lemma}\label{growth}
We have $\HH^{n-1+\alpha}(Z_\alpha)=0$. Every $x\in\Omega\setminus(A\cup Z_\alpha)$ admits $c,C,r_0>0$, depending on $x,n,\alpha,\lambda,\Lambda$ such that, for $0<|x'|,r<r_0$,
\begin{equation}\label{growthbounds}
c|x'|^N\le w_x(x')\le C|x'|^{1+\alpha} \quad\text{and}\quad \partial u(B_r(x))\subset B_{Cr^\alpha}(Du(x)), \quad\text{where} \ N=2/\alpha.
\end{equation}
The supporting plane to $u$ at $x$ touches its graph only at $x$ in $\Omega$.
\end{lemma}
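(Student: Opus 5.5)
The proof splits into the measure statement and the pointwise growth estimates.

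\emph{The null set.} Since $\widetilde\mu$ is a finite Radon measure on $B_R$, the standard density theorem for Hausdorff measures gives $\HH^{s}(\{x:\limsup_{r\downarrow0}\widetilde\mu(B_r(x))/r^{s}>t\})\le C\,\widetilde\mu(B_R)/t$ for $s=n-1+\alpha$. Letting $t\to\infty$ yields $\HH^{n-1+\alpha}(Z_\alpha)=0$. So fix $x\in\Omega\setminus(A\cup Z_\alpha)$. Then there are $K,r_1$ with $\widetilde\mu(B_r(x))\le Kr^{n-1+\alpha}$ for $r<r_1$.

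\emph{Upper bound.} Use $\Delta u\le\widetilde\mu$, so $\Delta w_x(B_r)\le Kr^{n-1+\alpha}$ for balls centered at $0$. Since $w_x\ge0$, $w_x(0)=0$ and $w_x$ is convex and subharmonic, I would use the spherical-mean representation
\[
\frac{d}{d\rho}\fint_{\partial B_\rho}w_x=\frac{\Delta w_x(B_\rho)}{|\partial B_\rho|}\le C K\rho^{\alpha}.
\]
The mean tends to $w_x(0)=0$ as $\rho\to0$, because $w_x$ is continuous. Integrating gives $\fint_{\partial B_\rho}w_x\le C\rho^{1+\alpha}$, and hence $\fint_{B_\rho}w_x\le C\rho^{1+\alpha}$. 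A nonnegative convex function is bounded on $B_{\rho}$ by a dimensional multiple of its average over $B_{2\rho}$. This gives $w_x\le C|x'|^{1+\alpha}$. Convexity then bounds subgradients: for $p\in\partial w_x(y)$ with $|y|<r$, we get $|p|\le \sup_{B_{2r}}w_x/r\le Cr^{\alpha}$. This is the inclusion $\partial u(B_r(x))\subset B_{Cr^\alpha}(Du(x))$.

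\emph{Lower bound.} This is the main obstacle, and I would follow Mooney's section argument while tracking the distance of a point from $x$. Suppose $y$ is close to $x$ with $h=w_x(y)$ small, and consider the section $S=S_{2h}[w_x](0)$, which contains $0$ and $y$. First I need $S$ to be bounded and contained in $B_{r}$ for small $h$. For this I use the last claim of the lemma, proved first. If the supporting plane touched the graph at a second point of $\Omega$, the contact set would be a convex set containing a segment. On it $\widetilde\mu$ carries at least the mass of the sections of $|\cdot|^2/2$ plus the Monge--Amp\`ere mass of $u$. Mooney's argument (Alexandrov estimate plus \eqref{sourcemeasure}) shows that sections around a segment have mass $\gtrsim r^{n-1}$ on balls of radius $r$ centered at $x$. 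This contradicts the $r^{n-1+\alpha}$ decay, so the contact set is $\{x\}$, and sections shrink to $x$.

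Given properness, \eqref{sectionvolume} with \eqref{sourcemeasure} gives $|S|\le Ch^{n/2}$. The section contains $0$ and $y$, so by convexity it contains a segment of length $|y|$, and its John ellipsoid has $a_1\gtrsim|y|$. Then $\prod_{i\ge2}a_i\lesssim h^{n/2}/|y|$. Estimate \eqref{sectionmass} gives $\mu_{w_x}(S)\ge ch^{n}/|S|$. I would instead count $\widetilde\mu$-mass via the Laplacian, $\widetilde\mu(S)\ge\Delta w_x(S)\gtrsim h/a_n^2$-type bounds, combined with $S\subset B_{C|y|}$ after rescaling. Balancing against $\widetilde\mu(B_{C|y|})\le K|y|^{n-1+\alpha}$ should give $h\gtrsim|y|^{2/\alpha}$. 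The exponent bookkeeping here, choosing which mass bound to compare so that the power comes out to $N=2/\alpha$, is where I expect to spend the effort. All constants depend on $x$ only through $K$, $r_1$ and $\operatorname{dist}(x,\partial\Omega)$.
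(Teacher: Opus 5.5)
Your treatment of $Z_\alpha$ (density theorem), of the upper bound $w_x\le C|x'|^{1+\alpha}$ (spherical means of $\Delta w_x\le\widetilde\mu$, then the mean value inequality for the nonnegative subharmonic $w_x$), and of the subgradient inclusion is the same as the paper's. The genuine gap is the lower bound $w_x(x')\ge c|x'|^{N}$, which is the substance of the lemma. You end with ``should give'', and the comparisons you indicate cannot close.

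For an arbitrary $y$ the section $S_{2h}[w_x](0)$ need not lie in $B_{C|y|}$, since it may be long in a direction other than $y$. You could arrange $S\subset\overline{B_{|y|}}$ by taking $y$ farthest from $0$ in the section, but even then comparing masses at the scale $|y|\approx a_1$ gives no information. In $n=2$, for example, take a bound of the type $\Delta w_x(S)\gtrsim h|S|/a_2^2\approx ha_1/a_2$. Set against $\widetilde\mu(B_{Ca_1}(x))\le Ka_1^{1+\alpha}$ and $a_1a_2\lesssim h$, it only yields $a_1^{1-\alpha}\lesssim K$. That holds for all small $a_1$ and says nothing about $h$. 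The Alexandrov bound $\mu_{w_x}(S)\ge ch^n/|S|\ge ch^{n/2}$, set against $Ka_1^{n-1+\alpha}$, bounds $h$ from above, which is the wrong direction. The decay $\widetilde\mu(B_r(x))\le Kr^{n-1+\alpha}$ has to be used at the scale $r\approx\sqrt h$, not $r\approx|y|$.

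The missing device is to use the quadratic part of $\widetilde\mu$ for localization. For small $h>w_x(x')$, the set $S'=\{z:w_x(z)+\tfrac12|z|^2<h\}$ lies in $B_{\sqrt{2h}}$. So $x+S'$ is automatically a proper section of $u+\tfrac12|\cdot|^2$, and $\widetilde\mu(x+S')\le CKh^{(n-1+\alpha)/2}$.

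The long direction is carried instead by $T=\{z\in B_{r_x}:w_x(z)<h\}$, a possibly non-proper section on a fixed ball. For it, \eqref{sectionvolume} still gives $|T|\le Ch^{n/2}$. Since $\tfrac12S'+[0,x'/2]\subset T$ while every line section of $S'$ has length at most $2\sqrt{2h}$, Fubini in the direction $x'$ gives $|S'|\le C\sqrt h\,|T|/|x'|\le Ch^{(n+1)/2}/|x'|$. This is the precise form of your observation $\prod_{i\ge2}a_i\lesssim h^{n/2}/|y|$.

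Then \eqref{sectionmass} for $\widetilde\mu$ gives $c|x'|h^{(n-1)/2}\le\widetilde\mu(x+S')\le CKh^{(n-1+\alpha)/2}$. Hence $|x'|\le CKh^{\alpha/2}$ for every small $h>w_x(x')$, that is, $w_x(x')\ge c|x'|^{2/\alpha}$.

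Nothing here requires properness of sections of $w_x$, so the touching statement follows as a corollary rather than a prerequisite. Your preliminary step establishing it first is at best a citation of Mooney's Section~3. It is exactly the qualitative form of the estimate you still need to prove quantitatively.
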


\begin{proof}
The density comparison theorem~\cite[Chapter~1, Theorem~3.3]{S14} gives $\HH^{n-1+\alpha}(Z_\alpha)=0$.

At a point $x\notin Z_\alpha$ there are $M_x\ge1$ and $0<r_x<\operatorname{dist}(x,\partial\Omega)$ such that
\begin{equation}\label{densitybound}
\widetilde\mu(B_r(x))\le M_xr^{n-1+\alpha}\qquad(0<r<r_x).
\end{equation}
Fix such a differentiability point and write $w=w_x$, so $w\ge0$ and $w(0)=0$.

For $0<t<r_x$, by a standard smooth approximation and divergence theorem we have
\[
m(t):=\frac1{|\partial B_t|}\int_{\partial B_t}w\,d\HH^{n-1}
=\frac1{|\mathbb S^{n-1}|}\int_0^t
\frac{\Delta w(B_s)}{s^{n-1}}\,ds
\le CM_xt^{1+\alpha}.
\]
Since $w\ge0$, by the mean value inequality we have
\[
\sup_{B_{2r}}w\le\frac1{|B_r|}\int_{B_{3r}}w\,dz
\le CM_xr^{1+\alpha}\qquad(3r<r_x).
\]
This proves the upper growth bound. If $x'\in B_r$ and
$0\ne p\in\partial w(x')$, by the supporting inequality at
$x'+rp/|p|\in B_{2r}$ we have
\[
r|p|\le w(x'+rp/|p|)-w(x')\le\sup_{B_{2r}}w
\le CM_xr^{1+\alpha},
\]
from which the subgradient inclusion follows.

Fix a small $x'\ne0$,  choose a sufficiently small height $h>w(x')$. Consider
\[
S=\{z:w(z)+\tfrac12|z|^2<h\} \quad\text{and}\quad T=\{z\in B_{r_x}:w(z)<h\}.
\]
For small $h$, $x+S$ is a proper section of $u+|\cdot|^2/2$ and $S\subset B_{\sqrt{2h}}$. The set $T$ is a section of $w$ on the fixed ball $B_{r_x}$, contains $S$ and $x'$, and satisfies by \eqref{sourcemeasure} and \eqref{sectionvolume},
\begin{equation}\label{volumebound}
|T|\le Ch^{n/2} .
\end{equation}
Let $Q$ be the projection of $S$ onto $(x')^\perp$. Convexity gives
$\tfrac12S+[0,x'/2]\subset T.$ It follows that 
$$\HH^{1}(T\cap(X+\R x'))\geq \frac{|x'|}{2},\qquad\forall X\in\tfrac12Q.$$
 On the other hand, $S\subset B_{\sqrt{2h}}$ bounds every line section of $S$ by $2\sqrt{2h}$ in length. By Fubini's theorem,
\[
|T|\ge2^{-n}|x'|\HH^{n-1}(Q) \quad\text{and}\quad |S|\le2\sqrt{2h}\,\HH^{n-1}(Q).
\]
Combining the above estimates with \eqref{volumebound} we have
\[
|S|\le C\frac{\sqrt h}{|x'|}|T| \le C\frac{h^{(n+1)/2}}{|x'|}.
\]
It then follows by \eqref{sectionmass} and \eqref{densitybound} that
\[
c|x'|h^{(n-1)/2} \le\frac{c_nh^n}{|S|} \le\widetilde\mu(x+S) \le\widetilde\mu(B_{\sqrt{2h}}(x)) \le C M_xh^{(n-1+\alpha)/2}.
\]
Consequently $h\ge(CM_x)^{-2/\alpha}|x'|^{2/\alpha}$. Since this holds for every sufficiently small $h>w(x')$, it follows that
\[
w(x')\ge(CM_x)^{-2/\alpha}|x'|^{2/\alpha}.
\]This proves the lower bound and strict convexity at $x.$
\end{proof}

Splitting the remaining points according to rational bounds for $c,C,r_0$, we obtain countably many subsets on which \eqref{growthbounds} holds uniformly.

\subsection{A fixed dual potential}
{~}

Using the ball $B_R\supset\overline\Omega$ fixed above, define
\begin{equation}\label{fixeddual}
v(y)=\max_{x\in\overline{B_R}}\{x\cdot y-u(x)\}.
\end{equation}
This function is finite, Lipschitz and convex. By biconjugation, $v^*=u$ on $B_R$~\cite[Appendix~A.4]{F17}. We use this same function throughout the proof. The next lemma localizes its equation.

\begin{lemma}\label{duallocalization}
Let $x_0\in\Omega\setminus(A\cup Z_\alpha)$ and $y_0=Du(x_0)$. There is a bounded neighborhood $\Omega_0^*$ of $y_0$ such that $v=u^*$ on $\Omega_0^*$ and $\partial v(\overline{\Omega_0^*})\Subset\Omega$. Moreover $Dv(y_0)=x_0$, and for every Borel set $\mathcal A\subset\Omega_0^*$,
\begin{equation}\label{dualmass}
\int_{\partial v(\mathcal A)}f(x)\,dx =\int_{\mathcal A\cap\Omega^*}g(y)\,dy.
\end{equation}
In particular,
\begin{equation}\label{dualequation}
\frac\lambda\Lambda\mathbf1_{\Omega^*}\,dy\le \mu_v \le\frac\Lambda\lambda\mathbf1_{\Omega^*}\,dy \qquad\text{on }\Omega_0^*.
\end{equation}
\end{lemma}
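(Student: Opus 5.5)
The plan is to use the strict convexity and the subgradient localization from Lemma~\ref{growth}, together with the duality \eqref{duality}. Since $x_0\notin A\cup Z_\alpha$, the function $u$ is differentiable at $x_0$ with $y_0=Du(x_0)$. By the last assertion of Lemma~\ref{growth}, the supporting plane $\ell(x)=u(x_0)+y_0\cdot(x-x_0)$ touches the graph of $u$ in $\Omega$ only at $x_0$. The growth bound $w_{x_0}(x')\ge c|x'|^N$ makes this quantitative near $x_0$.

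\textbf{Step 1: $Dv(y_0)=x_0$.} By \eqref{fixeddual}, $\partial v(y)$ is the convex hull of the maximizers $x\in\overline{B_R}$ of $x\cdot y-u(x)$. At $y=y_0$ the maximizers are exactly the points where $u-\ell$ attains its minimum over $\overline{B_R}$. To show that $x_0$ is the only one, I would use the construction of the extension $\tilde u$. It is a supremum of affine functions with slopes in $\Omega^*$ lying below $u$ on $\Omega$, so $u-\ell\ge0$ on $\R^n$. Equality away from $\overline\Omega$ would force $\ell$ to coincide with $\tilde u$ along a segment leaving $\Omega$ from a contact point. I expect to rule this out using strict convexity at $x_0$: the set $\{u=\ell\}$ is convex and contains $x_0$, and $x_0$ is an isolated point of it in $\Omega$, so the set is $\{x_0\}$. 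Hence $\partial v(y_0)=\{x_0\}$.

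\textbf{Step 2: choose $\Omega_0^*$.} The map $y\mapsto\partial v(y)$ is upper semicontinuous. It follows that for a small ball $\Omega_0^*=B_\rho(y_0)$, the set $\partial v(\overline{\Omega_0^*})$ lies in any prescribed neighborhood of $x_0$, say $B_{r}(x_0)\Subset\Omega$. I would make this quantitative through the lower growth bound. For $|y-y_0|<\rho$, any maximizer $x$ satisfies $w_{x_0}(x-x_0)\le (y-y_0)\cdot(x-x_0)$. This gives $c|x-x_0|^{N}\le\rho|x-x_0|$, so $|x-x_0|\le (\rho/c)^{1/(N-1)}$. Since the maximizers lie in $\Omega$, which is interior to $B_R$, the maximum in \eqref{fixeddual} over $\overline{B_R}$ equals the global supremum over $\R^n$. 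Here I use $u\ge\ell$ globally and convexity: a local maximizer of the concave function $x\cdot y-u(x)$ is global. Therefore $v=u^*$ on $\Omega_0^*$.

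\textbf{Step 3: the mass identity.} By \eqref{duality}, for $\mathcal A\subset\Omega_0^*$ we have $\partial v(\mathcal A)=\{x\in\Omega:\partial u(x)\cap\mathcal A\neq\varnothing\}$. Up to the null set $A$, this equals $(Du)^{-1}(\mathcal A)$ within $\Omega$. Applying \eqref{transport} gives $\int_{\partial v(\mathcal A)}f=\int_{\mathcal A}g\,\mathbf 1_{\Omega^*}$, which is \eqref{dualmass}. For \eqref{dualequation} I would use $\lambda\le f\le\Lambda$:
\[
\mu_v(\mathcal A)=|\partial v(\mathcal A)|\in\bigl[\Lambda^{-1},\lambda^{-1}\bigr]\cdot\int_{\mathcal A\cap\Omega^*}g .
\]
Combining this with $\lambda\le g\le\Lambda$ yields the bounds.

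\textbf{Main obstacle.} The delicate point is Step 1, specifically excluding contact of $\ell$ with $\tilde u$ outside $\Omega$. Lemma~\ref{growth} only controls $u$ inside $\Omega$. If this step proves troublesome, I would avoid it altogether. It is enough that the maximizers for $y$ near $y_0$ stay near $x_0$. For this I would first use the concave maximization locally: a maximizer inside $B_r(x_0)$ exists by the growth bound, and it is global by concavity. Then $\partial v(y)$, being the set of all global maximizers, is a convex set whose points near $x_0$ lie in $B_r(x_0)$. By convexity of the contact set $\{u-y\cdot x=\min\}$, it cannot also contain far points without containing points at intermediate distance. Those intermediate points are excluded by the quantitative estimate in Step 2.
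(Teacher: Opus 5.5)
Your argument is correct, and Step 3 is the same as the paper's. The localization is organized differently:
\begin{itemize}
\item You first show $\partial v(y_0)=\{x_0\}$ from the convexity of the contact set $\{w_{x_0}=0\}$. You then invoke upper semicontinuity of $\partial v$ to obtain a non-explicit radius $\rho$.
\item The paper extends the growth bound along rays to $w_{x_0}(z)\ge cr^{N-1}|z|$ for $|z|\ge r$ on all of $\R^n$. This is legitimate because the extended $u$ is convex on $\R^n$.
\item From that bound the paper reads off directly that, for $|y-y_0|\le cr^{N-1}/2$, every maximizer in \eqref{fixeddual} lies in $B_r(x_0)$. Uniqueness at $y_0$, $v=u^*$ and $\partial v(\overline{\Omega_0^*})\Subset\Omega$ then follow simultaneously.
\end{itemize}
Your fallback in the last paragraph is essentially the paper's argument.

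The paper's version gives a radius for $\Omega_0^*$ that depends only on $c,r,N$, rather than on $x_0$ through a compactness argument. This uniformity is exactly what the proof of Lemma~\ref{sectionbounds} later quotes (``by the proof of Lemma~\ref{duallocalization}, for $|y|<\tfrac12cr_0^{N-1}$ the supremum defining $v(y)$ is attained in $B_{r_0}$''). So the quantitative form is the one worth keeping.

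One small correction: $u\ge\ell$ on $\R^n$ does not follow merely from the extension being a supremum of affine functions with slopes in $\Omega^*$. It holds because the extension is convex on $\R^n$ and differentiable at the interior point $x_0$ with gradient $y_0$.
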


\begin{proof}
Choose $r>0$ with $\overline{B_{2r}(x_0)}\Subset\Omega$ such that $w_{x_0}(z)\ge c|z|^N$ for $|z|\le2r$. Convexity along rays extends this to
\[
w_{x_0}(z)\ge cr^{N-1}|z|\qquad(|z|\ge r).
\]
Hence, if $|y-y_0|\le cr^{N-1}/2$, then
\[
(y-y_0)\cdot z-w_{x_0}(z)\le-\tfrac c2r^{N-1}|z|<0\qquad(|z|\ge r),
\]
whereas the left side is zero at $z=0$. 
Thus, taking
$\Omega_0^*=B_{cr^{N-1}/2}(y_0),$
all maximizers lie in $B_r(x_0)$ for every
$y\in\overline{\Omega_0^*}$. Consequently,
$v=u^*$ on $\Omega_0^*$ and
$\partial v(\overline{\Omega_0^*})
\subset B_r(x_0)\Subset\Omega.$
At $y_0$ the unique maximizer is $x_0$, so
$Dv(y_0)=x_0$.

Since $v=u^*$ on $\Omega_0^*$, by Legendre duality we have
\[
x\in\partial v(\mathcal A)
\quad\Longleftrightarrow\quad
Du(x)\in\mathcal A
\] for almost every $x\in\Omega.$
Hence by \eqref{transport} and \eqref{densities},
we have \eqref{dualmass} and \eqref{dualequation}.
\end{proof}

Shrink the neighborhoods in Lemma~\ref{duallocalization} to bounded boundary charts $\Omega_0^*$ crossing $\partial\Omega^*$. Thus $v=u^*$ on $\Omega_0^*$, $\partial v(\overline{\Omega_0^*})\Subset\Omega$, and \eqref{dualequation} holds there. In each chart choose orthonormal coordinates $y=(y',y_n)$ and a smooth function $\varrho$ such that
\[
\Omega^*\cap\Omega_0^*=\{y\in\Omega_0^*:y_n>\varrho(y')\}.
\]
Define $F(y)=y_n-\varrho(y')$. Then
\[
\Omega^*\cap\Omega_0^*=\{F>0\}\quad\text{and}\quad \partial\Omega^*\cap\Omega_0^*=\{F=0\}.
\]
Since $DF(y)=(-D\varrho(y'),1)$, we have
\begin{equation}\label{boundarygradient}
|DF(y)|=\sqrt{1+|D\varrho(y')|^2}\ge1.
\end{equation}
By second countability, countably many such charts cover $Du(E\setminus Z_\alpha)$. On slightly smaller charts the derivatives of $\varrho$ of each fixed order are bounded.

\begin{definition}\label{preparedset}
Fix $0<\alpha<1$, take $N=2/\alpha$, and choose one such chart $\Omega_0^*$. For fixed $c,C,r_0>0$, let $\mathcal P=\mathcal P(c,C,r_0,\Omega_0^*)$ consist of the points $x\in E\setminus Z_\alpha$ satisfying \eqref{growthbounds} and
\[
\overline{B_{r_0}(x)}\subset\Omega \quad\text{and}\quad \overline{B_{r_0}(Du(x))}\subset\Omega_0^*.
\]
\end{definition}

By Lemmas~\ref{growth} and~\ref{duallocalization}, every point of $E\setminus Z_\alpha$ belongs to a set $\mathcal P(c,C,r_0,\Omega_0^*)$ from Definition~\ref{preparedset}. Choosing the charts and bounds from countable families therefore gives a countable cover of $E\setminus Z_\alpha$ by such sets. We work on one such set at a time. All subsequent local estimates are uniform in its base point and height.

\subsection{Size of dual sections}\label{sectionsize}
{~}

Fix a set $\mathcal P$ as in Definition~\ref{preparedset} and use $v$ from \eqref{fixeddual}. Let $T$ normalize $S_h[v](y_0,x_0)$. Normalizing the target coordinates by $T^{-1}$ and the potential by $h^{-1}$ induces the source coordinate change $X=h^{-1}T^{\mathsf{T}}(x-x_0)$. We call $x_0+hT^{-\mathsf{T}}B_1$ the \emph{source ellipsoid} associated with this normalization. Its semiaxes are $h/a_i$. For $x\in\mathcal P$ set
\begin{equation}\label{radii}
\mathcal R_h(x)=\sup\bigl\{|y-Du(x)|:y\in S_h[v](Du(x),x)\bigr\} \quad\text{and}\quad r_h(x)=c_*\frac h{\mathcal R_h(x)}.
\end{equation}
Here $c_*\in(0,1]$ depends only on the fixed data and will be chosen in Subsection~\ref{comparison}. The estimates below hold for any fixed such choice. The radius $r_h(x)$ is comparable to the smallest semiaxis of $x+hT^{-\mathsf{T}}B_1$.

Fix a base point $x_0\in\mathcal P$ and put $y_0=Du(x_0)$. Translating $y_0$ and $x_0$ to the origin and subtracting the supporting affine functions, we may assume $u(0)=v(0)=0$, $Du(0)=Dv(0)=0$ and $u,v\ge0$, so that $S_h=S_h[v](0,0)=\{v<h\}$. Let $y_h+T_hB_1$ be the normalizing ellipsoid of $S_h$, with smallest and largest semiaxes $a_{\min}(h)$ and $\beta(h)$, and write $\mathcal R_h=\mathcal R_h(0)$, $r_h=r_h(0)$.

\begin{lemma}\label{sectionbounds}
Uniformly for base points in $\mathcal P$ and for small $h$, the section $S_h$ is proper and contained in $\Omega_0^*$, and
\begin{gather}
c_1|y|^{(1+\alpha)/\alpha}\le v(y)\le C_1|y|^{N/(N-1)}, \label{dualgrowth}\\
ch^{1-1/N}\le a_{\min}(h)\le\beta(h)\le Ch^{\alpha/(1+\alpha)} ,\qquad \beta(h)\le\mathcal R_h\le2n\beta(h),\label{axisbounds}\\
|\det T_h|\ge ch^{n/2},\label{determinant}\\
ch^{1/(1+\alpha)}\le r_h\le Ch^{1/N} ,\qquad \|hT_h^{-\mathsf{T}}\|\le Ch^{1/N}.\label{radiusbounds}
\end{gather}
Moreover, $h\mapsto r_h$ is continuous and nondecreasing and attains every sufficiently small positive value.
\end{lemma}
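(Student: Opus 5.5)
The plan is to derive everything from Legendre duality applied to the uniform growth bounds \eqref{growthbounds}, together with the localization in Lemma~\ref{duallocalization}.

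\emph{Dual growth \eqref{dualgrowth}.} After normalization, $v=u^*$ near $0$. By Lemma~\ref{duallocalization} and its proof, the maximizers in \eqref{fixeddual} lie in $B_{r_0}$ for $|y|$ small, uniformly over $\mathcal P$, since the radius there depends only on $c,r_0,N$. So $v(y)=\sup_{|z|<r_0}\{y\cdot z-u(z)\}$. The upper bound $u(z)\le C|z|^{1+\alpha}$ gives the lower bound: taking $z$ parallel to $y$ with $|z|=(|y|/C')^{1/\alpha}$ yields $v(y)\ge c_1|y|^{(1+\alpha)/\alpha}$. The lower bound $u(z)\ge c|z|^N$ gives
\[
v(y)\le\sup_t\{|y|t-ct^N\}=C_1|y|^{N/(N-1)}.
\]
\emph{Properness and semiaxes.} From \eqref{dualgrowth}, for small $h$ we get
\[
B_{(h/C_1)^{(N-1)/N}}\subset S_h\subset B_{(h/c_1)^{\alpha/(1+\alpha)}}.
\]
This gives properness and $S_h\subset\Omega_0^*$ uniformly, using $\overline{B_{r_0}(Du(x))}\subset\Omega_0^*$. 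It also gives $\mathcal R_h\le Ch^{\alpha/(1+\alpha)}$ and an inscribed ball of radius $ch^{1-1/N}$.

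Comparing the inscribed ball with $y_h+nT_hB_1$ yields $a_{\min}\ge ch^{1-1/N}$ up to a factor $n$. The containment $y_h+T_hB_1\subset S_h\subset B_{\mathcal R_h}$ gives $\beta\le\mathcal R_h$. Also $0\in S_h\subset y_h+nT_hB_1$ gives $\mathcal R_h\le|y_h|+n\beta\le2n\beta$.

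\emph{Determinant \eqref{determinant}.} This is where the measure is needed. The estimate \eqref{sectionvolume} applies to $v$ only in the region where $\mu_v\ge c\,dy$, and \eqref{dualequation} gives a lower bound only on $\Omega^*$, which is the main obstacle. Instead I would use \eqref{sectionmass} with the upper bound in \eqref{dualequation}: since $S_h$ is proper,
\[
c_nh^n/|S_h|\le\mu_v(S_h)\le(\Lambda/\lambda)|S_h|,
\]
so $|S_h|\ge ch^{n/2}$. Since $|S_h|\le n^n|B_1|\det T_h$, this gives \eqref{determinant}.

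\emph{Radii \eqref{radiusbounds} and continuity.} The relation $r_h=c_*h/\mathcal R_h$ combined with $ch^{1-1/N}\le\mathcal R_h\le Ch^{\alpha/(1+\alpha)}$ gives $ch^{1/(1+\alpha)}\le r_h\le Ch^{1/N}$. The semiaxes of $hT_h^{-\mathsf T}$ are $h/a_i\le h/a_{\min}\le Ch^{1/N}$.

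For monotonicity, I would show that $h/\mathcal R_h$ is nondecreasing. Let $y$ realize $\mathcal R_{h'}$ with $h'>h$. By convexity of $v$ and $v(0)=0$, the point $(h/h')y$ satisfies $v\le h$. Hence $\mathcal R_h\ge(h/h')\mathcal R_{h'}$, which gives $h/\mathcal R_h\le h'/\mathcal R_{h'}$.

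For continuity, $\mathcal R_h$ is the sup over the open sublevel set. It is left continuous because sublevel sets increase to $S_h$. It is right continuous because $\bigcap_{h'>h}\overline{S_{h'}}=\{v\le h\}=\overline{S_h}$, the last equality holding since $v$ is convex with minimum $0<h$.

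Thus $r_h$ is continuous and nondecreasing, with $r_h\to0$ as $h\to0$. By the intermediate value theorem it attains every sufficiently small positive value.
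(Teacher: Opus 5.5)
Your proposal is correct and follows the paper's proof essentially step for step. You use duality with the growth bounds for \eqref{dualgrowth}, John's normalization for the axis bounds, \eqref{sectionmass} with the upper bound in \eqref{dualequation} for the determinant, and the scaling $\frac{h}{h'}S_{h'}\subset S_h$ for monotonicity. The only difference is cosmetic: the paper reads continuity off directly from the two-sided bound $\frac{h}{h'}\mathcal R_{h'}\le\mathcal R_h\le\mathcal R_{h'}$, whereas you argue via left and right limits of sublevel sets.
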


\begin{proof}
By the definition of $v$ and the upper bound in \eqref{growthbounds}, we have
\[
v(y)\ge\sup_{0<t<r_0}\bigl(t|y|-Ct^{1+\alpha}\bigr)=c_1|y|^{(1+\alpha)/\alpha}
\]
for small $|y|$. 
For the upper bound, by the proof of Lemma \ref{duallocalization}, for $|y|<\frac12cr_0^{N-1}$ the supremum defining $v(y)$ is therefore attained in $B_{r_0}$, where it is bounded by $\sup_{t\ge0}(t|y|-ct^N)=C_1|y|^{N/(N-1)}$. This proves \eqref{dualgrowth}, from which \eqref{axisbounds} follows.

By \eqref{sectionmass} and \eqref{dualequation},
\[
ch^n/|S_h|\le\mu_v(S_h)\le C|S_h|,
\]
so $|S_h|\ge ch^{n/2}.$ Then,  \eqref{determinant} follows from $\det T_h\approx |S_h|.$
The estimates \eqref{radiusbounds} follow from $r_h=c_*h/\mathcal R_h$, \eqref{axisbounds} and $\|hT_h^{-\mathsf{T}}\|=h/a_{\min}(h)$.

Since $v(0)=0$, by convexity we have 
\[
\frac{h_1}{h_2}S_{h_2}\subset S_{h_1}\subset S_{h_2}\qquad(h_1<h_2),
\]
 Hence
\[
\frac{h_1}{h_2}\mathcal R_{h_2}\le\mathcal R_{h_1}\le\mathcal R_{h_2},
\]
so $\mathcal R_h$ is continuous and $r_h$ is continuous and nondecreasing. Since $r_h\to0$, it attains every sufficiently small positive value.
\end{proof}

\subsection{Rescaling the target boundary}\label{taylor}
{~}

Recall that $F$ defines the fixed boundary chart. Take $y_0=Du(x_0)$ with $x_0\in\mathcal P$. Choose an integer $d_0$ such that
\begin{equation}\label{degree}
d_0>(1+\alpha)/\alpha.
\end{equation}
Let $P_h^0$ be the degree-$d_0$ Taylor polynomial at zero of $Y\mapsto F(y_0+T_hY)$, and set
\[
\gamma_h=\|P_h^0\|_{L^\infty(B_1)}.
\]
Consider a sequence $(x_j,y_j,h_j)$ with $x_j\in\mathcal P$, $h_j\downarrow0$ and $y_j=Du(x_j)$. For each $j$, let $T_j$ normalize $S_{h_j}[v](y_j,x_j)$, $P_j^0$ be the degree-$d_0$ Taylor polynomial at zero of $Y\mapsto F(y_j+T_jY)$, and $\gamma_j=\|P_j^0\|_{L^\infty(B_1)}$. The following normalization keeps a nonzero polynomial in the limit.

\begin{lemma}\label{taylorlimit}
For every $x_0\in\mathcal P$ and sufficiently small $h>0$, we have
\begin{equation}\label{amplitude}
ca_{\min}(h)\le\gamma_h\le C\beta(h).
\end{equation}
For every sequence as above, there is a subsequence such that, for every $L>0$,
\[
\left\|\frac{F(y_j+T_j\,\cdot\,)}{\gamma_j}-P\right\|_{C^2(B_L)}\to0,
\]
where $P$ is a polynomial with $\deg P\le d_0$, $P(0)=0$ and $\|P\|_{L^\infty(B_1)}=1$.
\end{lemma}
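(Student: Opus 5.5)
The plan is to write $G_h(Y)=F(y_0+T_hY)$. Its Taylor coefficients at zero are $\partial^\gamma G_h(0)=D^\gamma F(y_0)[T_hY,\dots]$, so a coefficient of order $k$ is bounded by $C\|T_h\|^k=C\beta(h)^k$. The derivatives of $F$ are bounded uniformly on the chart, and \eqref{axisbounds} gives $\beta(h)\le Ch^{\alpha/(1+\alpha)}\le 1$ for small $h$.

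\emph{Upper bound in \eqref{amplitude}.} Since $F(y_0)=0$, the constant term of $P_h^0$ vanishes. Hence on $B_1$ we have $|P_h^0|\le\sum_{k=1}^{d_0}C\beta(h)^k\le C\beta(h)$.

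\emph{Lower bound in \eqref{amplitude}.} The linear part of $P_h^0$ is $Y\mapsto DF(y_0)\cdot T_hY=(T_h^{\mathsf T}DF(y_0))\cdot Y$. Its sup over $B_1$ is $|T_h^{\mathsf T}DF(y_0)|\ge a_{\min}(h)|DF(y_0)|\ge a_{\min}(h)$ by \eqref{boundarygradient}. To extract the linear part from $\|P_h^0\|_{L^\infty(B_1)}$, I will use equivalence of norms on the finite-dimensional space of polynomials of degree $\le d_0$. The coefficient functional $Q\mapsto DQ(0)$ satisfies $|DQ(0)|\le C(n,d_0)\|Q\|_{L^\infty(B_1)}$ (Markov-type inequality). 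Therefore $\gamma_h\ge c\,a_{\min}(h)$.

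\emph{Compactness.} For the sequence, set $G_j=F(y_j+T_j\cdot)/\gamma_j$ and write $G_j=P_j^0/\gamma_j+R_j$, where $R_j$ is the Taylor remainder of order $d_0+1$. On $B_L$ we have $|D^mR_j|\le C_L\beta_j^{d_0+1}/\gamma_j$ for $m\le 2$, because all derivatives of order $d_0+1$ of $F(y_j+T_j\cdot)$ are $O(\beta_j^{d_0+1})$. By the lower bound, $\gamma_j\ge c\,a_{\min}(h_j)\ge ch_j^{1-1/N}$ via \eqref{axisbounds}, while $\beta_j^{d_0+1}\le Ch_j^{(d_0+1)\alpha/(1+\alpha)}$. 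The choice \eqref{degree} gives $(d_0+1)\alpha/(1+\alpha)>1>1-1/N$, so $\beta_j^{d_0+1}/\gamma_j\to0$, i.e.\ $R_j\to0$ in $C^2(B_L)$. This step is the main obstacle, since it is exactly where the degree condition \eqref{degree} and both sides of \eqref{axisbounds} enter. The uniformity over base points in $\mathcal P$ (Lemma~\ref{sectionbounds}) is needed so that the constants do not depend on $j$.

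Finally, the polynomials $P_j^0/\gamma_j$ have degree $\le d_0$, vanish at $0$, and have $L^\infty(B_1)$ norm $1$. They lie in a compact subset of a finite-dimensional space, so a subsequence converges in coefficients to some $P$ with the same three properties. Coefficient convergence implies $C^2(B_L)$ convergence for every $L$, and combining this with $R_j\to0$ gives the claim.
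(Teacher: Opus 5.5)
Your proposal is correct and follows the paper's proof essentially step for step. You use norm equivalence on polynomials of degree $\le d_0$ to extract the linear coefficient $T_h^{\mathsf T}DF(y_0)$ for the lower bound, and the coefficient bound $C\beta(h)^k$ with vanishing constant term for the upper bound; then the Taylor remainder estimate $C_L\beta_j^{d_0+1}/\gamma_j\to0$ via \eqref{degree} and \eqref{axisbounds}, and compactness of the normalized polynomials $P_j^0/\gamma_j$ in a finite-dimensional space give the convergence.
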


\begin{proof}
On the space of polynomials of degree at most $d_0$, the coefficient norm and the $L^\infty(B_1)$ norm are equivalent. Since the linear coefficient of $P_h^0$ is $T_h^{\mathsf{T}}DF(y_0)$, we find by \eqref{boundarygradient} that
\[
\gamma_h\ge c|T_h^{\mathsf{T}}DF(y_0)|\ge ca_{\min}(h).
\]
Since $F(y_0)=0$, the constant coefficient vanishes. For $1\le k\le d_0$, each order-$k$ coefficient is bounded by
\[
C\|D^kF(y_0)\|\,\|T_h\|^k\le C\beta(h)^k,
\]
since the derivatives of $F$ are uniformly bounded. Thus $\gamma_h\le C\beta(h)$ for small $h$. All constants are uniform for $x_0\in\mathcal P$ and in the orthonormal frame. Hence \eqref{amplitude} follows.

Fix a sequence $x_j\in\mathcal P$, $h_j\to0$. Let $\beta_j=\|T_j\|$. By \eqref{amplitude} and \eqref{axisbounds}, $\gamma_j\ge ch_j^{1-1/N}$. For each fixed $L>0$, this lower bound, Taylor's formula and \eqref{axisbounds} show
\[
\left\|\frac{F(y_j+T_j\,\cdot\,)-P_j^0}{\gamma_j}\right\|_{C^2(B_L)} \le C_L\frac{\beta_j^{d_0+1}}{\gamma_j} \le C_Lh_j^{\alpha(d_0+1)/(1+\alpha)-(1-1/N)}\to0.
\]
Here $C_L$ is independent of $j$, and the exponent is positive by \eqref{degree}. The polynomials $P_j^0/\gamma_j$ have degree at most $d_0$, vanish at zero, and have $L^\infty(B_1)$ norm one. Thus $P_j^0/\gamma_j\to P$ in $C^2(B_L)$ for every fixed $L>0$, with $P(0)=0$ and $\|P\|_{L^\infty(B_1)}=1$.
\end{proof}

\section{Finite measures and coverings}\label{measures}

Fix a set $\mathcal P=\mathcal P(c,C,r_0,\Omega_0^*)$ as in Definition~\ref{preparedset}. Choose
\[
\Omega_0=\{x\in\Omega: \operatorname{dist}(x,\partial\Omega)>r_0/2\}, \quad\text{and}\quad \Omega_1^*=\{y\in\Omega_0^*: \operatorname{dist}(y,\partial\Omega_0^*)>r_0/2\}.
\]
By Definition~\ref{preparedset},
\[
\mathcal P\subset\Omega_0\quad\text{and}\quad Du(\mathcal P)\subset\Omega_1^*,
\]
and these two regions will contain the source and target ellipsoids, respectively, at sufficiently small heights. For every $x_0\in\mathcal P$, let $T_h$ normalize $S_h[v](y_0,x_0)$ with $y_0=Du(x_0)$. By Definition~\ref{preparedset} and Lemma~\ref{sectionbounds}, there exists $h_0>0$, independent of $x_0$, such that for all
\[
0<h<h_0,
\]
there holds
\begin{equation}\label{containment}
S_h[v](y_0,x_0)\Subset\Omega_1^*,\quad x_0+hT_h^{-\mathsf{T}}B_1\Subset\Omega_0,\quad \partial u(x_0+hT_h^{-\mathsf{T}}B_1)\subset\Omega_1^*.
\end{equation}
The last inclusion follows from the uniform subgradient estimate in \eqref{growthbounds}, since the source ellipsoid shrinks uniformly and $\operatorname{dist}(Du(x_0),\partial\Omega_0^*)>r_0$.

We henceforth consider only heights $0<h<h_0$.

\subsection{Weighted measures}\label{weights}
{~}

Let $0<\epsilon<1$. Define the target measure $\nu_0$ on $\Omega_1^*$ and the source measure $\mu_0$ on $\Omega_0$ by
\begin{equation}\label{weightedmeasures}
d\nu_0(y)=\mathbf1_{\{F(y)\ne0\}}|F(y)|^{-1+\epsilon}\,dy \quad\text{and}\quad d\mu_0(x)=\mathbf1_{\{Du(x)\in\Omega_1^*\cap\Omega^*\}} F(Du(x))^{-1+\epsilon}\,dx.
\end{equation}
The boundary chart and the transport identity imply
\begin{equation}\label{finitemeasures}
\nu_0(\Omega_1^*)<\infty \quad\text{and}\quad \mu_0(\Omega_0)\le\frac\Lambda\lambda \int_{\Omega_1^*\cap\Omega^*}F(y)^{-1+\epsilon}\,dy<\infty.
\end{equation}
For a section of height $h$ with normalizing matrix $T$ and semiaxes $a_1\geq\dots\geq a_n$ for the associated John ellipsoid, define the \emph{covering cost} of this section by
\[
\cc:=\left(\frac{h}{a_1}\right)^{n-1}\prod_{i>1}\frac{h/a_i}{h/a_1}=\frac{h^{n-1}}{a_2\cdots a_n}.
\]
Here $(h/a_1)^{n-1}$ represents the cost per ball and $\prod_{i>1}(a_1/a_i)$ the number of balls needed to cover the source ellipsoid, both up to dimensional constants.

In the planar proof we use $\nu_0$ and $\mu_0$. In higher dimensions, we retain $\nu_0$ and use the graph measures introduced in Section~\ref{higherdimensions} in place of $\mu_0$.

\subsection{Selection}\label{selection}
{~}

For a section $S_h[v](y_0,x_0)$, let $z+TB_1$ be a normalizing ellipsoid. Write $y=y_0+TY$ and define
\begin{equation}\label{normalizedpotential}
\tilde{v}(Y)=\frac{v(y_0+TY)-v(y_0)-x_0\cdot TY}{h}.
\end{equation}
We call $\tilde{v}$ the \emph{normalized potential}. Then $\tilde{v}\ge0=\tilde{v}(0)$ and $T^{-1}(S_h[v](y_0,x_0)-y_0)=\{\tilde{v}<1\}$.
Set $Y^\circ:=T^{-1}(z-y_0)$. By \eqref{normalization} and $y_0\in S_h[v](y_0,x_0)$,
\[
B_1(Y^\circ)\subset\{\tilde{v}<1\}\subset B_{2n}\quad\text{and}\quad |Y^\circ|\le n.
\]

We use Macbeath regions~\cite{Ma52} and the packing--covering
scheme of B\'ar\'any and Larman~\cite{BL88}.
For epigraph packing in strips of fixed vertical width,
see~\cite{AFM26}.
We give the formulation needed here, including the selected-mass
estimate and simultaneous source and target coverings.

\begin{lemma}\label{selectionlemma}
Let $v$ be a proper lower semicontinuous convex function, $\Omega_1^*$ a bounded open set, and $\nu$ a finite Borel measure on $\Omega_1^*$. Let $I$ be an index set. For $i\in I$, take $x_i\in\partial v(y_i)$, $h_i>0$, 
an invertible linear transformation $T_i$, and a point $Y_i\in\mathbb R^n$ in the normalized target coordinates $y=y_i+T_iY$, 
subject to the hypotheses below. Define
\begin{equation}\label{normalizedsequence}
\tilde{v}_i(Y)=\frac{v(y_i+T_iY)-v(y_i)-x_i\cdot T_iY}{h_i}.
\end{equation}
Suppose that, with constants $c,C,M,r_1,R_1$ uniform in $i\in I$,
\begin{equation}\label{localbounds}
\sup_{B_{r_1}(Y_i)}\tilde{v}_i\le M\quad\text{and}\quad|Y_i|\le R_1,
\end{equation}
and, for every $i\in I$ and $Z\in\R^n$,
\begin{equation}\label{symmetricgrowth}
\tilde{v}_i(Y_i+Z)+\tilde{v}_i(Y_i-Z) \ge c|Z|-C.
\end{equation}
Then there is $C_*>0$, depending only on these constants and $n$, with the following property. For Borel sets $Q_i\subset\Omega_1^*\cap\bigl(y_i+T_iB_{r_1/20}(Y_i)\bigr)$ and $h>0$, take $I_h=\{i\in I:h/2<h_i\le h\}$. There is a countable $J\subset I_h$ such that

\begin{itemize}
\item[(i)] The selected sets satisfy
\begin{equation}\label{selectedmass}
\sum_{j\in J}\nu(Q_j)\le C_*\,\nu(\Omega_1^*).
\end{equation}
\item[(ii)] The source and target points are covered by
\begin{equation}\label{selectedcover}
\{x_i:i\in I_h\}\subset\bigcup_{j\in J} \bigl(x_j+C_*h_jT_j^{-\mathsf{T}}B_1\bigr) \quad\text{and}\quad \{y_i:i\in I_h\}\subset\bigcup_{j\in J} \bigl(y_j+C_*T_jB_1\bigr).
\end{equation}
\end{itemize}

\end{lemma}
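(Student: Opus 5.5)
We prove Lemma~\ref{selectionlemma} with a Vitali-type greedy selection applied to the "Macbeath-type" target regions $\mathcal M_i:=y_i+T_i B_{r_1/20}(Y_i)$, together with an engulfing property of sections that converts disjointness in the target into bounded overlap.

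\emph{Step 1: engulfing and comparability.} For $i,k\in I_h$, the heights $h_i,h_k$ are within a factor $2$. Suppose
\[
\bigl(y_i+T_iB_{r_1/10}(Y_i)\bigr)\cap\bigl(y_k+T_kB_{r_1/10}(Y_k)\bigr)\ne\varnothing .
\]
We show that $T_k^{-1}T_i$ and its inverse have norms bounded by a constant, and that $T_k^{-1}(y_i-y_k)$ is bounded. The upper bound \eqref{localbounds} says $v$ lies within $Mh_i$ of the supporting plane $\ell_i(y)=v(y_i)+x_i\cdot(y-y_i)$ on $y_i+T_iB_{r_1}(Y_i)$. The symmetric growth \eqref{symmetricgrowth} says that, along every chord through $y_i+T_iY_i$, $v-\ell_i$ grows at least linearly at rate $ch_i$ beyond distance $C$ in normalized units.

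At a common point, $v-\ell_i\le Mh_i$ and $v-\ell_k\le Mh_k$. Convexity, applied on the ball of radius $r_1/20$ around that point, which lies inside both $r_1$-balls, then bounds $|\ell_i-\ell_k|\le C h$ there. The growth of $v-\ell_k$ along chords, compared with the boundedness of $v-\ell_i$ on $T_iB_{r_1}$-balls, forces $y_i+T_iB_{r_1/2}(Y_i)\subset y_k+T_kB_{C}(Y_k)$. Symmetrically, the reverse inclusion also holds, which gives the comparability. Dualizing the inclusion of target ellipsoids gives the reverse inclusion of the dual ellipsoids. The estimate $|x_i-x_k|\le C h|T_k^{-\mathsf T}(\cdot)|$, i.e.\ $x_i\in x_k+C h_kT_k^{-\mathsf T}B_1$, follows from the slope estimate: the difference of slopes $x_i-x_k$ of two affine functions that are $Ch$-close on an ellipsoid $y+T_kB_{r}$ satisfies $|T_k^{\mathsf T}(x_i-x_k)|\le Ch$.

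\emph{Step 2: greedy selection and covering.} Choose a maximal family $J\subset I_h$ such that the enlarged regions $y_j+T_jB_{r_1/10}(Y_j)$ are pairwise disjoint; Zorn's lemma, or a countable exhaustion using separability and the fact that these regions have volume bounded below relative to $\Omega_1^*$ being bounded at fixed comparable scale, gives such a family. Countability follows from disjointness of open sets in $\R^n$. By maximality, every $i\in I_h$ meets some $j\in J$, so by Step~1 we get $y_i\in y_j+C_*T_jB_1$, using $|Y_i|\le R_1$, and $x_i\in x_j+C_*h_jT_j^{-\mathsf T}B_1$. This proves (ii).

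\emph{Step 3: mass bound.} Since $Q_j\subset\Omega_1^*\cap\mathcal M_j$ and the $\mathcal M_j$ are pairwise disjoint for $j\in J$, we get $\sum_j\nu(Q_j)\le\nu(\Omega_1^*)$, so (i) holds even with $C_*=1$. The multiplicity constant would only matter if we insisted on the regions $y_j+C_*T_jB_1$ themselves; disjointness of the small regions already suffices.

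\emph{Main obstacle.} The delicate point is Step~1: deriving, from only the upper bound on a small ball and the symmetric linear growth \eqref{symmetricgrowth}, rather than a genuine doubling property of $\mu_v$, that two intersecting normalized regions at comparable heights have comparable normalizing matrices, and the resulting control on $x_i-x_k$. We would first establish a one-dimensional lemma along each line through the intersection point, combining convexity with the two-sided bounds, and then take a supremum over directions to obtain the matrix comparison.
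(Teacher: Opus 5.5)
Your route is genuinely different from the paper's, and it can be made to work, but Step~1, which carries the whole argument, is not yet proved, and the one concrete inference you give for it presupposes its conclusion.

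\textbf{How the paper argues.} The paper never compares target regions directly. It works in $\mathcal K=\epi v\subset\R^{n+1}$ and selects a maximal family of pairwise disjoint shrunken Macbeath regions $z_i+\tfrac1{10}\operatorname{int}\mathcal N_i$, with $z_i=(y_i^\circ,v(y_i^\circ)+L_0h_i)$. It gets (i) by Fubini in the vertical variable: over each $y\in Q_j$ the selected regions have disjoint vertical slices of length at least $h_j/5$, all inside a strip $ch\le t-v(y)\le Ch$. It gets (ii) from the Macbeath overlap lemma \eqref{commoncenter}, a general fact about convex bodies that is intrinsic to $\epi v$. Disjointness in $\R^{n+1}$ is weaker than disjointness of the target projections. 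That is why the paper needs the strip argument and gets only some $C_*$ in \eqref{selectedmass}.

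\textbf{How your route differs.} You require disjointness in the target itself. Then (i) holds with $C_*=1$ for free. All the work moves into an engulfing statement for intersecting target regions, which must be proved directly from \eqref{localbounds} and \eqref{symmetricgrowth}. That statement is true, so your scheme gives a more elementary proof.

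\textbf{The flaw in Step 1.} You appeal to ``the ball of radius $r_1/20$ around that point, which lies inside both $r_1$-balls.'' Before comparability is known, $y_i^\circ+T_iB_{r_1}$ and $y_k^\circ+T_kB_{r_1}$ (with $y_i^\circ=y_i+T_iY_i$) can have unrelated shapes. So no $T_i$- or $T_k$-ball of that radius about $p$ need lie in both, and you control $\ell_i-\ell_k$ only at the common point $p$.

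\textbf{How to close it.} The pointwise control at $p$ suffices if you reflect through $p$. Put $g_i=v-\ell_i\ge0$ and $g_k=v-\ell_k\ge0$. For $y^*\in y_i^\circ+T_iB_{r_1/2}$, both $y^*$ and $2p-y^*$ lie in $y_i^\circ+T_iB_{r_1}$. Since $g_i-g_k$ is affine,
\[
g_k(y^*)+g_k(2p-y^*)=g_i(y^*)+g_i(2p-y^*)-2g_i(p)+2g_k(p)\le 2Mh_i+2Mh_k\le 4Mh .
\]
Next, \eqref{symmetricgrowth} controls only the two-sided sum about $Y_k$, not each side, so recenter it at $p$. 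Let $f(Z)=\tilde v_k(Y_k+Z)$ and $Q=T_k^{-1}(p-y_k^\circ)$, so $|Q|<r_1/10$. Convexity gives $f(\pm W/2)\le\tfrac12 f(Q\pm W)+\tfrac12 f(-Q)\le\tfrac12 f(Q\pm W)+\tfrac M2$. Hence $f(Q+W)+f(Q-W)\ge c|W|-2C-2M$.
\begin{itemize}
\item Taking $W=T_k^{-1}(y^*-p)$ and using $h_k>h/2$ bounds $|W|$. That is, $y_i^\circ+T_iB_{r_1/2}\subset p+T_kB_{C'}$.
\item Swapping $i$ and $k$ gives two-sided comparability of $T_i$ and $T_k$. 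Together with $|Y_i|,|Y_k|\le R_1$, this gives the target covering.
\item Only now does an ellipsoid $p+T_kB_{c'}$ lying in both $r_1$-regions exist. On it $|\ell_i-\ell_k|\le Mh$, which gives $|T_k^{\mathsf T}(x_i-x_k)|\le Ch_k$ as in your slope estimate.
\end{itemize}
This is the ``one-dimensional lemma along lines through the intersection point'' you anticipated. The two ingredients missing from your sketch are the cancellation of the affine difference in the symmetric sum about $p$, and the recentering of \eqref{symmetricgrowth}.

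\textbf{Minor point.} In Step~2, the claim that the regions have volume bounded below is unjustified, since nothing bounds $|\det T_i|$ from below. It is also unnecessary: Zorn's lemma and the countability of disjoint open sets suffice.
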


\begin{proof}
\emph{Step 1. Symmetric regions.} Denote $\mathcal K=\epi v\subset\R^{n+1}$. For $L_0\ge M+2$, let
\[
y_i^\circ=y_i+T_iY_i,\qquad z_i=\bigl(y_i^\circ,v(y_i^\circ)+L_0h_i\bigr),\qquad \mathcal N_i=(\mathcal K-z_i)\cap(z_i-\mathcal K).
\]
Thus $z_i+\mathcal N_i$ is the largest set contained in $\mathcal K$ that is centrally symmetric about $z_i$. Let $\ell_i(y):=v(y_i)+x_i\cdot(y-y_i)$. The coordinates $Y=T_i^{-1}(y-y_i)$ and $s=(t-\ell_i(y))/h_i$ from \eqref{normalizedsequence} send $\mathcal K$ to $\epi \tilde{v}_i$. Let $\widehat{\mathcal N}_i$ be the image of $z_i+\mathcal N_i$ in these coordinates. By the first estimate in \eqref{localbounds} and the convexity, we find that
\begin{equation}\label{innerbox}
B_{r_1}(Y_i)\times[H_i-1,H_i+1] \subset\widehat{\mathcal N}_i \quad \text{where} \ H_i=\tilde{v}_i(Y_i)+L_0.
\end{equation}
Take $(Y_i+Z,H_i+\sigma)\in\widehat{\mathcal N}_i$, for some $(Z,\sigma)\in\R^n\times\R$. By central symmetry, the reflected point $(Y_i-Z,H_i-\sigma)$ also belongs to $\widehat{\mathcal N}_i$. Both points lie in $\epi \tilde{v}_i$, hence
\[
\tilde{v}_i(Y_i+Z)\le H_i+\sigma \quad\text{and}\quad \tilde{v}_i(Y_i-Z)\le H_i-\sigma.
\]
Since $\tilde{v}_i\ge0$, these inequalities imply $ 0\le H_i+\sigma\le2H_i\le2M+2L_0$, which controls the vertical coordinate of $\widehat{\mathcal N}_i$. Adding the above two inequalities and using \eqref{symmetricgrowth} also lead to
\[
c|Z|-C\le \tilde{v}_i(Y_i+Z)+\tilde{v}_i(Y_i-Z) \le2H_i \le 2(M+L_0).
\]
Thus $|Z|$ is uniformly bounded. Enlarging $C$ if needed, these bounds give
\begin{equation}\label{outerbox}
\widehat{\mathcal N}_i \subset B_C(Y_i)\times [0,2M+2L_0].
\end{equation}
Taking horizontal projections in \eqref{innerbox} and \eqref{outerbox} and returning to the original coordinates,
\begin{equation}\label{projection}
y_i^\circ+T_iB_{r_1} \subset \{y:\text{there exists }t\text{ with }(y,t)\in z_i+\mathcal N_i\} \subset y_i^\circ+T_iB_C.
\end{equation}
The vertical interval bounds in \eqref{innerbox} and \eqref{outerbox}, rescaled by $h_i$, also give
\begin{equation}\label{verticallength}
2h_i \le\LL^1\{t:(y,t)\in z_i+\mathcal N_i\} \le Ch_i \qquad\bigl(\ \forall y\in y_i^\circ+T_iB_{r_1} \ \bigr).
\end{equation}
Here $C$ depends only on the constants in the hypotheses and the fixed choice of $L_0$.

\emph{Step 2. Selection.}
In the $(Y,s)$ coordinates, shrinking about $z_i$ acts as 
$(Z,\sigma)\to(Z/10,\sigma/10)$. 
Since $Q_i\subset y_i^\circ+T_iB_{r_1/20}$, multiplying its normalized horizontal displacements by $10$ places them in $B_{r_1/2}$. Thus \eqref{innerbox} gives
\begin{equation}\label{reducedlength}
\LL^1\{t:(y,t)\in z_i+\tfrac1{10}\operatorname{int}\mathcal N_i\} \ge h_i/5\qquad(y\in Q_i).
\end{equation}
For every $(y,t)\in z_i+\tfrac1{10}\operatorname{int}\mathcal N_i$, 
there is a point $(\bar y,\bar t)\in z_i+\mathcal N_i\subset\epi v$ such that
\[
 (y,t)=\tfrac9{10}z_i+\tfrac1{10}(\bar y,\bar t).
\]
Using the definition of $z_i$ and convexity, we obtain
\[
 t-v(y)
 \ge\bigl(\frac9{10}v(y_i^\circ)+\frac9{10}L_0h_i+\frac1{10}\bar t\bigr)
-\bigl(\frac9{10}v(y_i^\circ)+\frac1{10}v(\bar y)\bigr)
 \ge\frac9{10}L_0h_i.
\]
Since both $(y,t)$ and its reflection $\left(2y_i^\circ-y,2\bigl(v(y_i^\circ)+L_0h_i\bigr)-t\right)$ about $z_i$ lie above $\ell_i$, we have
\begin{equation}\label{heightbound}
t-v(y)\le t-\ell_i(y) \le2\bigl(v(y_i^\circ)+L_0h_i-\ell_i(y_i^\circ)\bigr) \le2(M+L_0)h_i.
\end{equation}
Thus, for $h/2<h_i\le h$, every $(y,t)\in z_i+\tfrac1{10}\operatorname{int}\mathcal N_i$ lies in the same strip
\begin{equation}\label{heightstrip}
ch\le t-v(y)\le Ch.
\end{equation}

Choose a maximal pairwise disjoint subfamily of $z_i+\tfrac1{10}\operatorname{int}\mathcal N_i$ with $h/2<h_i\le h$, indexed by $j$. It is countable. For each fixed $y$, their vertical intervals are disjoint and lie in the strip~\eqref{heightstrip}. Integrating their lengths over $Q_j\subset\Omega_1^*$ and using \eqref{reducedlength}, 
Fubini's theorem and \eqref{heightbound} in order, we have
\[
\begin{aligned}
\frac h{10}\sum_j\nu(Q_j) &\le\sum_j\int_{Q_j} \LL^1\{t:(y,t)\in z_j+\tfrac1{10}\operatorname{int}\mathcal N_j\} \,d\nu(y)\\
&\le\int_{\Omega_1^*}\LL^1\{t:ch\le t-v(y)\le Ch\}\,d\nu(y)\le Ch\,\nu(\Omega_1^*).
\end{aligned}
\]
Dividing by $h$ proves \eqref{selectedmass}.

\emph{Step 3. Target covering.}
Fix $i\in I_h$. By maximality in Step 2, there is $j\in J$ such that
\begin{equation}\label{intersection}
\bigl(z_i+\tfrac1{10}\operatorname{int}\mathcal N_i\bigr)
\cap\bigl(z_j+\tfrac1{10}\operatorname{int}\mathcal N_j\bigr)\ne\varnothing.
\end{equation}
The corresponding $1/5$-regions also intersect. 
The Macbeath overlap property~\cite[Lemma~2.5]{BCP93} gives
\begin{equation}\label{commoncenter}
z_i+\tfrac15\mathcal N_i\subset z_j+\mathcal N_j,
\qquad z_j+\tfrac15\mathcal N_j\subset z_i+\mathcal N_i.
\end{equation}
To apply the convex-body version, intersect $\mathcal K$ with a sufficiently large closed ball containing both full regions, which are bounded by \eqref{outerbox}. This leaves their Macbeath regions unchanged.
Taking difference sets in \eqref{commoncenter} and using central symmetry yields
\begin{equation}\label{regioncomparison}
\tfrac15\mathcal N_j\subset\mathcal N_i\subset5\mathcal N_j.
\end{equation}
Taking horizontal projections and using \eqref{projection}, we obtain
\[
cT_jB_1\subset T_iB_1\subset CT_jB_1.
\]
Also, \eqref{commoncenter} gives $z_i\in z_j+\mathcal N_j$, hence
$y_i^\circ-y_j^\circ\in CT_jB_1$ by \eqref{projection}.
Since $y_i^\circ-y_i=T_iY_i$ and $|Y_i|,|Y_j|\le R_1$,
\[
y_i-y_j=(y_i^\circ-y_j^\circ)-T_iY_i+T_jY_j\in CT_jB_1.
\]
This proves the target covering in \eqref{selectedcover}.

\emph{Step 4. Source covering.}
By \eqref{commoncenter},
\begin{equation}\label{l4.1:e1}
z_j+\tfrac15\mathcal N_j
\subset(z_i+\mathcal N_i)\cap(z_j+\mathcal N_j).
\end{equation}
By \eqref{projection}, its horizontal projection contains
\begin{equation}\label{l4.1:e0}
y_j^\circ+\tfrac{r_1}{5}T_jB_1.
\end{equation}
On $z_i+\mathcal N_i$, reflection about $z_i$, as in \eqref{heightbound}, gives
\[
0\le v(y)-\ell_i(y)\le t-\ell_i(y)\le2(M+L_0)h_i.
\]
The same bound holds with $j$ in place of $i$. Since $h_i,h_j\le h$, \eqref{l4.1:e1} therefore implies
\[
|\ell_i(y)-\ell_j(y)|\le Ch
\qquad\bigl(y\in y_j^\circ+\tfrac{r_1}{5}T_jB_1\bigr).
\]
The affine function $\ell_i-\ell_j$ has slope $x_i-x_j$, so its oscillation on \eqref{l4.1:e0} is
$\frac{2r_1}{5}|T_j^{\mathsf T}(x_i-x_j)|$.
As $h<2h_j$, we obtain
\[
|T_j^{\mathsf T}(x_i-x_j)|\le Ch_j.
\]
Thus $x_i\in x_j+Ch_jT_j^{-\mathsf T}B_1$, proving the source covering in \eqref{selectedcover} and completing (ii).
\end{proof}

To control source-side measures, we also apply Lemma~\ref{selectionlemma} to $v^*$. For ordinary sections, write $\tilde{u}_i=\tilde{v}_i^*$ and $x=x_i+h_iT_i^{-\mathsf{T}}X$. Then
\begin{equation}\label{sourcenormalization}
\begin{gathered}
\tilde{u}_i(X)=\frac{v^*(x_i+h_iT_i^{-\mathsf{T}}X)-v^*(x_i)-y_i\cdot h_iT_i^{-\mathsf{T}}X}{h_i}\ge0,\\
X\in\partial\tilde{v}_i(Y)\iff Y\in\partial\tilde{u}_i(X).
\end{gathered}
\end{equation}
Let $b_i+T_iB_1$ be the chosen normalizing ellipsoid and set $Y_i:=T_i^{-1}(b_i-y_i)$. By \eqref{normalization},
\[
B_1(Y_i)\subset\{\tilde{v}_i<1\}\subset B_{2n}\quad\text{and}\quad |Y_i|\le n.
\]
Convexity along $\{\lambda Y:\lambda\ge0\}$ gives
\begin{equation}\label{raybound}
\tilde{v}_i(Y)\ge |Y|/(2n)-1.
\end{equation}
Since $\tilde{u}_i=\tilde{v}_i^*$, we thus see from the definition of Legendre transform that
\begin{equation}\label{s4:e1}
0\le\tilde{u}_i(X)\le\sup_{Y\in\mathbb R^n}\left\{X\cdot Y-\frac{|Y|}{2n}+1\right\}=1
\qquad\text{for}\ X\in B_{1/(2n)}.
\end{equation}
Since $B_1(Y_i)\subset\{\widetilde v_i<1\}$, we have
\[
\tilde{u}_i(X)\ge \sup_{Y\in B_1(Y_i)} X\cdot Y-1 =  Y_i\cdot X+|X|-1.
\]
Switching $X$ and $-X$ then shows
\begin{equation}\label{sourcegrowth}
\tilde{u}_i(X)+\tilde{u}_i(-X)\ge2|X|-2.
\end{equation}
We now fix the constant $\rho$ as below which is used throughout the paper,
\begin{equation}\label{rhochoice}
0<\rho<\frac1{40n}.
\end{equation}
Together with \eqref{s4:e1},
\eqref{sourcegrowth} verifies the hypotheses of Lemma~\ref{selectionlemma} for $v^*$ with base points $x_i$, supporting slopes $y_i$, normalizing matrices $h_iT_i^{-\mathsf{T}}$ and normalized centers zero. The local boundedness radius is $1/(2n)$, so \eqref{rhochoice} verifies the prescribed set condition. For a finite source measure $\mu$ and prescribed Borel subsets of $x_i+\rho h_iT_i^{-\mathsf{T}}B_1$, the lemma selects a countable subfamily with bounded total $\mu$-mass and covers the source centers by $x_j+Ch_jT_j^{-\mathsf{T}}B_1$.

For $s>0$, let $c_s$ be the standard normalization constant for $\HH^s$. For $\mathcal A\subset\R^n$ and $0<\eta\le\infty$, define
\[
\HH^s_\eta(\mathcal A)=c_s\inf\left\{\sum_{i=1}^\infty(\operatorname{diam}U_i)^s:\mathcal A\subset\bigcup_{i=1}^\infty U_i,\ \operatorname{diam}U_i\le\eta\right\}.
\]
The infimum is over countable covers by sets $U_i\subset\R^n$. The case $\eta=\infty$ imposes no diameter restriction and gives the $s$-dimensional Hausdorff content $\HH^s_\infty$. The Hausdorff measure satisfies $\HH^s(\mathcal A)=\lim_{\eta\downarrow0}\HH^s_\eta(\mathcal A)$, with the normalization chosen so that $\HH^k$ agrees with $k$-dimensional Lebesgue measure on every $k$-dimensional affine subspace.

The next lemma bounds the Hausdorff covering cost of a source ellipsoid by the measure of a corresponding set.

\begin{lemma}\label{ellipsoidcover}
Let $\mathcal E$ be an ellipsoid with semiaxes $b_1\ge\dots\ge b_n$, and let $\delta\ge0$. Take
\[
\cc=\prod_{i<n}b_i.
\]

\begin{itemize}
\item[(i)] The geometric covering estimate is
\[
\HH^{n-1+\delta}_{2b_n}(\mathcal E)\le C\cc\,b_n^\delta.
\]
\item[(ii)] If $\mathcal E$ is the source ellipsoid associated with a section of $v$ at height $h$, with normalizing matrix $T$ and $\beta=\|T\|$, then $b_n=h/\beta\le C\sqrt h$. If a measure $\mu$ and a set $Q$ satisfy $\mu(Q)\ge ch^\epsilon\cc$, then
\begin{equation}\label{coveringcost}
\HH^{n-1+\delta}_{C\sqrt h}(\mathcal E) \le Ch^{\delta/2-\epsilon}\mu(Q).
\end{equation}
\end{itemize}

\end{lemma}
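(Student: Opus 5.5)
Part (i) is a direct grid covering; part (ii) then follows from the geometry of source ellipsoids established in Lemma~\ref{sectionbounds}.

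\emph{Part (i).} After a rotation and translation, take $\mathcal E$ to be centered at the origin with axes along the coordinate directions. Then $\mathcal E$ lies in the box $\prod_i[-b_i,b_i]$. Cover this box by cubes of side $b_n/\sqrt n$ aligned with the axes. Each such cube has diameter $b_n<2b_n$, so these cubes are admissible in the definition of $\HH^{n-1+\delta}_{2b_n}$.

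In direction $i$ one needs at most $2\sqrt n\,b_i/b_n+1\le 3\sqrt n\,b_i/b_n$ cubes, since $b_i\ge b_n$. The last direction contributes only a bounded factor. The total number of cubes is therefore at most
\[
C\prod_{i<n}\frac{b_i}{b_n}.
\]
Multiplying by $(\operatorname{diam})^{n-1+\delta}=b_n^{n-1+\delta}$ gives
\[
\HH^{n-1+\delta}_{2b_n}(\mathcal E)\le C\prod_{i<n}b_i\cdot b_n^{\delta}=C\,\cc\,b_n^\delta.
\]

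\emph{Part (ii).} The source ellipsoid is $x_0+hT^{-\mathsf T}B_1$, whose semiaxes are $h/a_i$. Ordered decreasingly, these are $b_1=h/a_n\ge\dots\ge b_n=h/a_1=h/\beta$.

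\emph{Covering cost.} With this ordering,
\[
\prod_{i<n}b_i=\frac{h^{n-1}}{a_2\cdots a_n},
\]
which is exactly the covering cost of Subsection~\ref{weights}. So the two notions of $\cc$ coincide.

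\emph{Smallest semiaxis.} I need $h/\beta\le C\sqrt h$, i.e.\ $\beta\ge c\sqrt h$. By \eqref{determinant}, $a_1^n\ge\det T\ge ch^{n/2}$, so $\beta=a_1\ge ch^{1/2}$, as required.

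\emph{Conclusion.} Since $b_n\le C\sqrt h$, covers of diameter at most $2b_n$ are admissible for scale $C\sqrt h$. Hence
\[
\HH^{n-1+\delta}_{C\sqrt h}(\mathcal E)\le \HH^{n-1+\delta}_{2b_n}(\mathcal E)\le C\,\cc\,b_n^\delta\le C\,\cc\,h^{\delta/2}.
\]
Finally, the hypothesis $\mu(Q)\ge ch^\epsilon\cc$ gives $\cc\le C h^{-\epsilon}\mu(Q)$, which yields \eqref{coveringcost}.

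\emph{Possible obstacle.} Everything here is routine. The one point to handle is the normalization of $\HH^s_\eta$ when the diameter bound $2b_n$ is compared with the threshold $C\sqrt h$. The constant $C$ in the subscript should be taken at least twice the constant in $b_n\le C\sqrt h$, so that all cubes used are admissible at scale $C\sqrt h$.
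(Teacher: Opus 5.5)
Your proposal is correct and follows essentially the same route as the paper: part (i) uses a grid of mesh $b_n/\sqrt n$ in the principal frame, and part (ii) uses \eqref{determinant} to get $\beta\ge c\sqrt h$, then the hypothesis $\mu(Q)\ge ch^\epsilon\cc$. Your explicit check that $\prod_{i<n}b_i=h^{n-1}/(a_2\cdots a_n)$ matches the covering cost of Subsection~\ref{weights}, and your remark on choosing the constant in the scale $C\sqrt h$, are details the paper leaves implicit.
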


\begin{proof}
A grid of mesh $b_n/\sqrt n$ in the principal frame covers $\mathcal E$ by at most $C\prod_{i<n}(b_i/b_n)$ balls of radius $b_n$. Thus
\[
\HH^{n-1+\delta}_{2b_n}(\mathcal E)\le C\prod_{i<n}(b_i/b_n)b_n^{n-1+\delta}=C\cc b_n^\delta.
\]
For a section of $v$, \eqref{determinant} gives
\[
\beta^n\ge|\det T|\ge ch^{n/2},
\]
so $b_n=h/\beta\le C\sqrt h$. If $\mu(Q)\ge ch^\epsilon\cc$, then
\[
\HH^{n-1+\delta}_{C\sqrt h}(\mathcal E)\le C\cc\,h^{\delta/2}\le Ch^{\delta/2-\epsilon}\mu(Q).
\]
\end{proof}

\subsection{The normalized mass}\label{normalizedmass}
{~}

Let $x\in\mathcal P$, $y=Du(x)$, and let $b_h+T_hB_1$ be a normalizing ellipsoid of $S_h[v](y,x)$. Let $\tilde{v}$ be the normalized potential in \eqref{normalizedpotential} for this section, and let $\tilde{u}=\tilde{v}^*$. John's normalization~\eqref{normalization}, \eqref{raybound} and \eqref{sourcegrowth} verify the selection hypotheses for $\tilde{v}$ and $\tilde{u}$.
With $\rho$ fixed by \eqref{rhochoice}, set
\begin{equation}\label{ellipsoids}
\Omega_h^*(x)=b_h+\rho T_hB_1, \qquad \Omega_h(x)=x+\rho hT_h^{-\mathsf{T}}B_1, \qquad \cc(x,h)=\frac{\|T_h\|h^{n-1}}{|\det T_h|}.
\end{equation}
By \eqref{rhochoice}, Lemma~\ref{selectionlemma} and its source-side version apply to the prescribed sets $\Omega_h^*(x)$ and $\Omega_h(x)$, respectively. Let $\nu$ be a finite Borel measure on $\Omega_1^*$ and $\mu$ a finite Borel measure on $\Omega_0$. Define the \emph{normalized mass} at $x$ of the scale $h$ by
\begin{equation}\label{massratio}
\Theta_h(x)=\frac{\nu(\Omega_h^*(x))+\mu(\Omega_h(x))} {h^\epsilon\cc(x,h)}.
\end{equation}
Throughout Section~\ref{planar} we will take $\nu=\nu_0$ and $\mu=\mu_0$. In Section~\ref{higherdimensions} we keep $\nu_0$, replace $\mu_0$ by a graph projection, and add one term from the graph itself.

Consider the following partition of $\mathcal P$:
\begin{equation}\label{masspartition}
\mathcal P_+=\{x\in\mathcal P:\limsup_{h\downarrow0}\Theta_h(x)>0\} \quad\text{and}\quad \mathcal P_0=\{x\in\mathcal P:\lim_{h\downarrow0}\Theta_h(x)=0\}.
\end{equation}

\begin{proposition}\label{positivemass}
For the finite measures in \eqref{massratio}, $\dim_{\mathrm H}\mathcal P_+\le n-1+2\epsilon$.
\end{proposition}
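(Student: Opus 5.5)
The plan is a Vitali-type covering argument at dyadic heights, using Lemma~\ref{selectionlemma} to select scales and Lemma~\ref{ellipsoidcover} to convert covering cost into mass. Fix $\delta>2\epsilon$. It suffices to show $\HH^{n-1+\delta}(\mathcal P_+)=0$. Write $\mathcal P_+=\bigcup_k\mathcal P_{+,k}$, where
\[
\mathcal P_{+,k}=\{x\in\mathcal P:\limsup_{h\downarrow0}\Theta_h(x)>1/k\}.
\]
Fix $k$ and $\eta>0$. Every $x\in\mathcal P_{+,k}$ admits arbitrarily small heights $h<h_0$ with $\Theta_h(x)>1/k$. For such a height, at least one of
\[
\nu(\Omega_h^*(x))\ge \tfrac1{2k}h^\epsilon\cc(x,h)\quad\text{or}\quad \mu(\Omega_h(x))\ge\tfrac1{2k}h^\epsilon\cc(x,h)
\]
holds, so $\mathcal P_{+,k}$ splits into a target-type part and a source-type part. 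Note that $\cc(x,h)=\|T_h\|h^{n-1}/|\det T_h|$ is comparable to $h^{n-1}/(a_2\cdots a_n)=\prod_{i<n}b_i$, where $b_i=h/a_{n+1-i}$ are the semiaxes of the source ellipsoid. This is exactly the quantity in Lemma~\ref{ellipsoidcover}.

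Next, fix a threshold $h_1\le h_0$, small in terms of $\eta$ and the constants. For each point of the target-type part, choose one good height $h_x<h_1$, and group the points dyadically: index them by $m$, with $h_x\in(2^{-m-1},2^{-m}]$. For each $m$, apply Lemma~\ref{selectionlemma} to $v$ with the following data:
\begin{itemize}
\item $\nu$ and $Q_x=\Omega^*_{h_x}(x)$, which is admissible by \eqref{rhochoice};
\item $Y_x$ equal to the John center, with the hypotheses supplied by \eqref{raybound} and $B_1(Y_x)\subset\{\tilde v<1\}$, as recorded in Subsection~\ref{normalizedmass}.
\end{itemize}
This yields a countable $J_m$ with $\sum_{j\in J_m}\nu(Q_j)\le C_*\nu(\Omega_1^*)$, and the points of level $m$ are covered by the enlarged source ellipsoids $x_j+C_*h_jT_j^{-\mathsf T}B_1$.

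For the source-type part, do the same with the source-side version of the lemma applied to $v^*$, using the finite measure $\mu$ and $Q_x=\Omega_{h_x}(x)$. This again gives bounded total selected mass and covers the centers by $x_j+Ch_jT_j^{-\mathsf T}B_1$.

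Each enlarged ellipsoid has semiaxes $C b_i$, so by Lemma~\ref{ellipsoidcover}(ii) with $\mu(Q_j)\ge (2k)^{-1}h_j^\epsilon\cc$,
\[
\HH^{n-1+\delta}_{C\sqrt{h_j}}\bigl(x_j+C_*h_jT_j^{-\mathsf T}B_1\bigr)\le Ck\,h_j^{\delta/2-\epsilon}\,\mu(Q_j),
\]
and likewise with $\nu$. Summing over $j\in J_m$ at level $m$ bounds the contribution by
\[
Ck\,2^{-m(\delta/2-\epsilon)}\bigl(\nu(\Omega_1^*)+\mu(\Omega_0)\bigr).
\]
Summing over $m\ge m_1$, where $2^{-m_1}\approx h_1$, gives
\[
\HH^{n-1+\delta}_{C\sqrt{h_1}}(\mathcal P_{+,k})\le Ck\,h_1^{\delta/2-\epsilon},
\]
because $\delta/2-\epsilon>0$. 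This tends to zero as $h_1\to0$, so $\HH^{n-1+\delta}(\mathcal P_{+,k})=0$. Since $\delta>2\epsilon$ is arbitrary, $\dim_{\mathrm H}\mathcal P_+\le n-1+2\epsilon$.

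The main obstacle is checking that the hypotheses of Lemma~\ref{selectionlemma} hold uniformly for the chosen family, with constants independent of $x$ and $h$. In particular one must check that the enlargement by $C_*$ in the covering only changes constants in Lemma~\ref{ellipsoidcover}, since semiaxes scale linearly and $\cc$ by $C_*^{n-1}$. One must also confirm that the comparability $\cc(x,h)\approx\prod_{i<n}b_i$ survives the choice of the John matrix up to rotation. I expect both to follow from \eqref{normalization} and the uniform estimates of Lemma~\ref{sectionbounds}, which hold for $h<h_0$ on $\mathcal P$.
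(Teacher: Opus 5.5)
Your proposal is correct and follows the paper's proof essentially step for step. You split by a threshold on $\limsup\Theta_h$ and by which alternative recurs, apply Lemma~\ref{selectionlemma} in dyadic height bands to $v$ for the target alternative or to $v^*$ for the source alternative, convert selected mass into covering cost via Lemma~\ref{ellipsoidcover}(ii), and sum the geometric series in $2^{-m(\delta/2-\epsilon)}$.

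The differences are only cosmetic: you choose one height per point instead of all pairs $(x,t)$ in a band, and you use thresholds $1/k$ instead of rational $b$. You should state explicitly that the target-type part consists of points where the first alternative holds at arbitrarily small heights. Your choice of a good $h_x<h_1$ for every $h_1$ requires exactly this.
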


\begin{proof}
Fix $\delta>2\epsilon$. For every $x\in\mathcal P_+$, there is a positive rational $b$ such that $\Theta_h(x)>2b$ at arbitrarily small heights. At each such height, at least one of the following inequalities holds:
\begin{equation}\label{massalternatives}
\nu(\Omega_h^*(x))\ge b h^\epsilon\cc(x,h) \quad\hbox{or}\quad \mu(\Omega_h(x))\ge b h^\epsilon\cc(x,h).
\end{equation}
Along a sequence of these heights tending to zero, one inequality holds infinitely often. It therefore suffices, for each fixed $b$, to treat separately the points admitting arbitrarily small heights with the first or the second inequality in \eqref{massalternatives}.

Consider the first inequality in \eqref{massalternatives}. For each $k$, we define
\[
I_{2^{-k}}=\bigl\{(x,t):x\in\mathcal P,\ 2^{-k-1}<t\le2^{-k},\ \nu(\Omega_t^*(x))\ge bt^\epsilon\cc(x,t)\bigr\}.
\]
This is the height group $I_h$ in Lemma~\ref{selectionlemma} with $h=2^{-k}$. For $i=(x,t)\in I_{2^{-k}}$, set $x_i=x$, $h_i=t$ and $y_i=Du(x_i)$, and let $b_i+T_iB_1$ normalize $S_{h_i}[v](y_i,x_i)$. Apply Lemma~\ref{selectionlemma} to $v$ with these data, centers $Y_i=T_i^{-1}(b_i-y_i)$, and prescribed sets $Q_i=\Omega_{h_i}^*(x_i)=b_i+\rho T_iB_1$ from \eqref{ellipsoids}. The hypotheses \eqref{localbounds} and \eqref{symmetricgrowth} follow from \eqref{normalization} and \eqref{raybound}. Since $\rho<1/20$, for all sufficiently large $k$, the section containment in \eqref{containment} gives
\[
Q_i=y_i+T_iB_\rho(Y_i) \subset\Omega_1^*\cap\bigl(y_i+T_iB_{1/20}(Y_i)\bigr).
\]
The lemma therefore selects a countable $J\subset I_{2^{-k}}$. The subfamily $(Q_j)_{j\in J}$ has total $\nu$-mass bounded by \eqref{selectedmass}, and \eqref{selectedcover} covers all qualifying source points. Denote these ellipsoids by $\mathcal E_j=x_j+C_*h_jT_j^{-\mathsf{T}}B_1$. By \eqref{coveringcost},
\begin{equation}\label{coveringsum}
\sum_j\HH^{n-1+\delta}_{C2^{-k/2}}(\mathcal E_j) \le C_b\sum_j h_j^{\delta/2-\epsilon}\nu(Q_j) \le C_b2^{-k(\delta/2-\epsilon)}\sum_j\nu(Q_j) \le C_b'2^{-k(\delta/2-\epsilon)}.
\end{equation}
Here $\delta/2-\epsilon>0$. Let $\mathcal P_{+,b}$ be the set of points $x\in\mathcal P$ such that the first inequality in \eqref{massalternatives} holds infinitely often along a sequence of heights tending to zero. For each sufficiently large $k$, denote the source ellipsoids selected above by $\mathcal E_{k,j}$. Then for every sufficiently large integer $k_0$ there holds
\[
\mathcal P_{+,b}\subset\bigcup_{k\geq k_0}\bigcup_{j}\mathcal E_{k,j}.
\]
It follows that
\[
\mathcal H^{n-1+\delta}_{C2^{-k_0/2}}(\mathcal P_{+,b})\leq\sum_{k\geq k_0}\sum_j\mathcal H^{n-1+\delta}_{C2^{-k/2}}(\mathcal E_{k,j})\leq C_b'\sum_{k\geq k_0}2^{-k(\delta/2-\epsilon)},
\]
where \eqref{coveringsum} is used in the last inequality. Since $k_0$ can be arbitrarily large, $\mathcal H^{n-1+\delta}(\mathcal P_{+,b})=0$.

For the second inequality in \eqref{massalternatives}, apply the lemma to $v^*$ with the dual data in \eqref{sourcenormalization}, measure $\mu$, and prescribed sets $Q_i=\Omega_{h_i}(x_i)$ from \eqref{ellipsoids}. Estimate~\eqref{sourcegrowth} verifies the symmetric growth hypothesis. The local boundedness radius $1/(2n)$ in \eqref{s4:e1} and \eqref{rhochoice} verify the prescribed-set condition. The same selection and covering argument applies. Taking the union of the two cases over positive rational $b$ gives $\mathcal H^{n-1+\delta}(\mathcal P_+)=0$. Letting $\delta\downarrow2\epsilon$ proves the conclusion.
\end{proof}

\section{The two-dimensional case}\label{planar}

Throughout this section $n=2$, and $\Theta_h$ uses the weighted measures in \eqref{weightedmeasures}, which are finite by \eqref{finitemeasures}. We estimate the product of their masses on $\Omega_h^*(x)$ and $\Omega_h(x)$.

\subsection{Limits of normalized sections}\label{planarlimit}
{~}

Let $x_j\in\mathcal P$, $y_j=Du(x_j)$ and $h_j\downarrow0$. Write
\begin{equation}\label{potentials}
\tilde{v}_j(Y)=\frac{v(y_j+T_jY)-v(y_j)-x_j\cdot T_jY}{h_j} \quad\text{and}\quad \tilde{u}_j=\tilde{v}_j^*,
\end{equation}
where $T_j$ normalizes $S_{h_j}[v](y_j,x_j)$. Let $b_j+T_jB_1$ be the corresponding normalizing ellipsoid and set $Y_j^0:=T_j^{-1}(b_j-y_j)$. Then $|Y_j^0|\le n$ and
\begin{equation}\label{sectionnormalization}
\tilde{v}_j\ge0,\qquad \tilde{v}_j(0)=0,\qquad B_1(Y_j^0)\subset\{\tilde{v}_j<1\}\subset B_{2n}.
\end{equation}
Choose orthonormal bases in the domain and range of $T_j$ in which $T_j$ is diagonal, and pass to a subsequence on which these bases converge. Put
\[
\beta_j=a_{1,j}\ge a_{2,j}>0,\qquad C_{\mathrm{cov},j}=\frac{\beta_jh_j}{|\det T_j|},\qquad F_j(Y)=F(y_j+T_jY).
\]
By Lemma~\ref{taylorlimit}, after a subsequence there are positive numbers $\gamma_j$ and a nonzero polynomial $P$ of degree at most $d_0$ such that
\begin{equation}\label{polynomiallimit}
P_j:=F_j/\gamma_j\to P\quad\hbox{in }C^2 \hbox{ on bounded sets} \quad\text{and}\quad P(0)=0.
\end{equation}

\begin{proposition}\label{planarclassification}
Let $n=2$ and suppose $\Theta_{h_j}(x_j)\to0$. After passing to a subsequence,
\[
T_j/\beta_j\to e^*\otimes e \quad\text{and}\quad P(Y)=p(e\cdot Y),
\]
where $|e^*|=|e|=1$ and $p$ is a nonzero polynomial of degree at most $d_0$. For the fixed small $\rho$ chosen in \eqref{rhochoice}, there is a convex function $\tilde{u}$ on $B_{2\rho}$ such that $\tilde{u}_j\to \tilde{u}$ locally uniformly there. The subgradients of $\tilde{u}_j$ are uniformly bounded on the smaller ball $B_\rho$. 
\end{proposition}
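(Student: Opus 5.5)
The plan is to play the target mass against the source mass and show that, in the plane, their product is too large compared with $(h_j^\epsilon C_{\mathrm{cov},j})^2$ unless $a_{2,j}$ is negligible relative to $\gamma_j$. The anisotropy $a_{2,j}/\gamma_j\to0$ then forces both the rank-one limit of $T_j/\beta_j$ and the one-variable form of $P$. Throughout write $\Omega_j^*=\Omega_{h_j}^*(x_j)$, $\Omega_j=\Omega_{h_j}(x_j)$, and recall $C_{\mathrm{cov},j}=\beta_jh_j/|\det T_j|=h_j/a_{2,j}$ when $n=2$.

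\emph{Step 1: compactness of $\tilde u_j$.} By \eqref{s4:e1}, $0\le\tilde u_j\le1$ on $B_{1/(2n)}$. Since $2\rho<1/(20n)$ by \eqref{rhochoice}, convexity gives a uniform Lipschitz bound on $B_{2\rho}$, hence $|\partial\tilde u_j|\le C$ on $B_\rho$. Arzel\`a--Ascoli, after a subsequence, then gives $\tilde u_j\to\tilde u$ locally uniformly on $B_{2\rho}$ with $\tilde u$ convex. This already proves the last two assertions.

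\emph{Step 2: lower bounds for the two masses.} Changing variables $y=y_j+T_jY$ in \eqref{weightedmeasures} gives
\[
\nu_0(\Omega_j^*)=|\det T_j|\,\gamma_j^{-1+\epsilon}\int_{B_\rho(Y_j^0)}|P_j|^{-1+\epsilon}\,dY\ge c\,a_{1,j}a_{2,j}\gamma_j^{-1+\epsilon}.
\]
The inequality holds because $|P_j|\le C$ on $B_{2n+1}$ by \eqref{polynomiallimit}.

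On the source side, substitute $x=x_j+h_jT_j^{-\mathsf T}X$ and use \eqref{sourcenormalization}, which gives $Du(x)=y_j+T_jD\tilde u_j(X)$ a.e. The indicator in $\mu_0$ holds a.e. on $\Omega_j$, by \eqref{containment} and since $|Du^{-1}(\partial\Omega^*)|=0$ by \eqref{transport}. Hence
\[
\mu_0(\Omega_j)=\frac{h_j^2}{|\det T_j|}\gamma_j^{-1+\epsilon}\int_{B_\rho}P_j(D\tilde u_j)^{-1+\epsilon}\,dX\ge c\,\frac{h_j^2}{a_{1,j}a_{2,j}}\gamma_j^{-1+\epsilon}.
\]
Here Step 1 keeps $D\tilde u_j$ in a fixed ball, on which $|P_j|\le C$.

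Multiplying the two bounds and using $\Theta_{h_j}(x_j)\to0$ gives
\[
\frac{a_{2,j}^2}{\gamma_j^{2}}\cdot\frac{\gamma_j^{2\epsilon}}{h_j^{2\epsilon}}\le C\,\frac{\nu_0(\Omega_j^*)\,\mu_0(\Omega_j)}{\bigl(h_j^\epsilon C_{\mathrm{cov},j}\bigr)^2}\le C\,\Theta_{h_j}(x_j)^2\to0.
\]
By \eqref{amplitude} and \eqref{axisbounds}, $\gamma_j\ge ch_j^{1-1/N}$, so $\gamma_j^{2\epsilon}/h_j^{2\epsilon}\ge ch_j^{-2\epsilon/N}\to\infty$. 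Therefore $a_{2,j}/\gamma_j\to0$. Since $\gamma_j\le C\beta_j$ by \eqref{amplitude}, also $a_{2,j}/\beta_j\to0$.

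\emph{Step 3: identifying the limits.} Write $T_j=\beta_je_j^*\otimes e_j+a_{2,j}f_j^*\otimes f_j$ in the convergent orthonormal frames, with $e_j\to e$ and $e_j^*\to e^*$. Since $a_{2,j}/\beta_j\to0$, this gives $T_j/\beta_j\to e^*\otimes e$. Next,
\[
|P_j(Y)-P_j((e_j\cdot Y)e_j)|\le \frac{a_{2,j}}{\gamma_j}\,|Y|\sup|DF|\to0
\]
uniformly on bounded sets, because $F_j(Y)-F_j((e_j\cdot Y)e_j)=F(y_j+T_jY)-F(y_j+\beta_j(e_j\cdot Y)e_j^*)$ and the two arguments differ by at most $a_{2,j}|Y|$. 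Passing to the limit with \eqref{polynomiallimit} gives $P(Y)=P((e\cdot Y)e)=:p(e\cdot Y)$. Finally, $p$ is nonzero of degree at most $d_0$ because $\|P\|_{L^\infty(B_1)}=1$.

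The main obstacle is the source-side lower bound in Step 2. It requires that $P_j\circ D\tilde u_j$ stay bounded on $B_\rho$, which is exactly the uniform subgradient bound of Step 1. It also requires checking that the indicator in $\mu_0$ holds on almost all of $\Omega_j$, so that no degeneracy of $D\tilde u_j$ toward $\{F=0\}$ makes the integral small. That part is harmless, since small $F$ only increases the weight $F^{-1+\epsilon}$.
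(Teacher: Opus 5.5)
Your proposal is correct and follows essentially the paper's argument. Compactness comes from \eqref{s4:e1}, and lower bounds on the target and source masses, multiplied and compared with $(h_j^\epsilon C_{\mathrm{cov},j})^2$ where $C_{\mathrm{cov},j}=h_j/a_{2,j}$, force $a_{2,j}/\gamma_j\to0$, which yields the rank-one limit and the one-variable form of $P$. The only differences are cosmetic: you use a zeroth-order difference estimate $|P_j(Y)-P_j((e_j\cdot Y)e_j)|\le Ca_{2,j}|Y|/\gamma_j$ where the paper bounds $DP_j\cdot e_{2,j}$, and you use the sharper bound $\gamma_j\ge ch_j^{1-1/N}$ where the paper only needs $\gamma_j\ge ch_j$.
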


\begin{proof}
\emph{Step 1. Compactness.} By \eqref{s4:e1}, $0\le\tilde{u}_j\le1$ on $B_{1/(2n)}$. By \eqref{rhochoice}, $4\rho<1/(2n)$. The convex subgradient estimate and Arzel\`a-Ascoli yield a subsequence with $\tilde{u}_j\to\tilde{u}$ locally uniformly on $B_{2\rho}$ and uniformly bounded subgradients on this ball.

\medskip\noindent \emph{Step 2. Rank one limit.} By \eqref{polynomiallimit}, $|F_j|\le C\gamma_j$ on each fixed ball. Since the normalized target ellipsoids have bounded centers and fixed radius $\rho$, and $\{F=0\}$ has Lebesgue measure zero,
\begin{equation}\label{targetmass}
\nu_0(\Omega_{h_j}^*(x_j)) \ge c\gamma_j^{-1+\epsilon}|\det T_j|.
\end{equation}
For almost every $X\in B_\rho$, put $x=x_j+h_jT_j^{-\mathsf{T}}X$. The source normalization \eqref{sourcenormalization} gives
\begin{equation}\label{p5.1:tt}
Du(x)=y_j+T_jD\tilde{u}_j(X).
\end{equation}
By the definition of $F_j$, we have $F(Du(x))=F_j(D\tilde{u}_j(X))$.
So \eqref{polynomiallimit} shows $|F(Du(x))|\le C\gamma_j$. 
Changing variables from $x$ to $X$, we obtain
\begin{equation}\label{sourcemass}
\mu_0(\Omega_{h_j}(x_j)) \ge c\gamma_j^{-1+\epsilon}\frac{h_j^2}{|\det T_j|}.
\end{equation}
Since $C_{\mathrm{cov},j}=h_j/a_{2,j}$, multiplying \eqref{targetmass} and \eqref{sourcemass} gives
\begin{equation}\label{massproduct}
\begin{aligned}
c\left(\frac{\gamma_j}{h_j}\right)^{2\epsilon} \frac{a_{2,j}^2}{\gamma_j^2} \le \frac{\nu_0(\Omega_{h_j}^*(x_j))\mu_0(\Omega_{h_j}(x_j))} {h_j^{2\epsilon}C_{\mathrm{cov},j}^2} \le\tfrac14\Theta_{h_j}(x_j)^2\to 0.
\end{aligned}
\end{equation}
By \eqref{amplitude} and \eqref{axisbounds}, $\gamma_j\ge ch_j$ for $h_j\le1$. Together with $\gamma_j\le C\beta_j$, this proves
\[
\frac{a_{2,j}}{\gamma_j}\to0 \quad\text{and}\quad \frac{a_{2,j}}{\beta_j}\to0.
\]
Choose orthonormal bases $e_{1,j},e_{2,j}$ and $e_{1,j}^*,e_{2,j}^*$ such that $T_je_{i,j}=a_{i,j}e_{i,j}^*$ for $i=1,2$. After passing to a further subsequence, both bases converge. Define
\[
e=\lim_j e_{1,j},\qquad e_2=\lim_j e_{2,j},\qquad e^*=\lim_j e_{1,j}^*.
\]
Then $e,e_2$ are orthonormal, $|e^*|=1$, and
\[
\frac{T_j}{\beta_j} =e_{1,j}^*\otimes e_{1,j} +\frac{a_{2,j}}{\beta_j}e_{2,j}^*\otimes e_{2,j} \to e^*\otimes e.
\]
\medskip\noindent \emph{Step 3. Boundary polynomial.} On each fixed ball, the chain rule gives
\[
|DP_j(Y)\cdot e_{2,j}| =\frac{|DF(y_j+T_jY)\cdot T_je_{2,j}|}{\gamma_j} \le C\frac{a_{2,j}}{\gamma_j}\to0.
\]
Passing to the limit yields $DP\cdot e_2=0$. Thus $P$ is constant along lines parallel to $e_2$, and
\[
P(Y)=p(e\cdot Y) \quad\text{and}\quad p(t):=P(te)\quad(t\in\R).
\]
Since $P$ is a nonzero polynomial of degree at most $d_0$, so is $p$.
\end{proof}

\subsection{Comparison of radii}\label{comparison}
{~}

We next compare the radii of sections centered at nearby points. The argument works in every dimension and will also be used in Section~\ref{higherdimensions}.

\begin{lemma}\label{radiuscomparison}
Let $\tilde{v}\ge0$ be convex, $\tilde{v}(0)=0$, and suppose $B_{r_1}(Y^\circ)\subset\{\tilde{v}<1\}\subset B_{R_1}$ for fixed $r_1,R_1>0$. Let $T:\R^n\to\R^n$ be an invertible linear map and set $\beta=\|T\|$. There are $c,C>0$, depending only on $r_1,R_1$, such that
\[
c\beta\le \sup_{Y\in\overline{S_1[\tilde{v}](Y',X)}}|T(Y-Y')| \le C\beta,\quad\forall X\in B_{1/(4R_1)}, \ \forall Y'\in\partial \tilde{v}^*(X).
\]
\end{lemma}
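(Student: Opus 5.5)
The plan is to reduce both bounds to two facts about the normalized picture. First, the section $S_1[\tilde v](Y',X)$ lies in a fixed ball $B_{C}$ and contains a fixed ball $B_{c}(Z_0)$, with constants depending only on $r_1,R_1$. Second, $Y'$ itself lies in a fixed ball. Granting these, the upper bound is immediate: $|T(Y-Y')|\le\beta(|Y|+|Y'|)\le C\beta$ for every $Y$ in the closure of the section. For the lower bound, let $e$ be a unit vector with $|Te|=\beta$. Both $Z_0\pm ce$ lie in the closed section, and $|T(Z_0+ce-Y')|+|T(Z_0-ce-Y')|\ge 2c|Te|=2c\beta$. So one of the two points gives $|T(Y-Y')|\ge c\beta$.

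\emph{Step 1: growth of $\tilde v$ and location of $Y'$.} Since $\tilde v\ge0=\tilde v(0)$ and $\{\tilde v<1\}\subset B_{R_1}$, convexity along rays from $0$ gives $\tilde v(Y)\ge|Y|/R_1-1$ for all $Y$, as in \eqref{raybound}. By \eqref{duality}, $Y'\in\partial\tilde v^*(X)$ means that $Y'$ maximizes $Y\mapsto X\cdot Y-\tilde v(Y)$. For $|X|<1/(4R_1)$ this function is bounded by $-\tfrac{3}{4R_1}|Y|+1$, which is negative for $|Y|>4R_1/3$, while it vanishes at $Y=0$. Hence $|Y'|\le 4R_1/3$. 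The same comparison with $Y=0$ gives $0\le\tilde v(Y')\le X\cdot Y'\le1/3$.

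\emph{Step 2: outer bound.} For $Y$ in the section we have $\tilde v(Y)<1+\tilde v(Y')+X\cdot(Y-Y')\le 1+\tfrac13+\tfrac{|Y|}{4R_1}+\tfrac13$. Combining this with $\tilde v(Y)\ge|Y|/R_1-1$ gives $\tfrac{3}{4R_1}|Y|\le C$, so $|Y|\le CR_1$. The closure of the section obeys the same bound.

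\emph{Step 3: inner ball.} This is the only step needing a choice, and it is the main point to get right. Take $Z=\tfrac14Y$ with $Y\in B_{r_1}(Y^\circ)$. Convexity and $\tilde v(0)=0$ give $\tilde v(Z)\le\tfrac14\tilde v(Y)<\tfrac14$. Then, using $\tilde v(Y')\ge0$,
\[
\tilde v(Z)-\tilde v(Y')-X\cdot(Z-Y')<\tfrac14+\tfrac{|Z|+|Y'|}{4R_1}\le\tfrac14+\tfrac{R_1/4+4R_1/3}{4R_1}<1,
\]
since $|Z|\le R_1/4$ because $B_{r_1}(Y^\circ)\subset B_{R_1}$. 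Thus $B_{r_1/4}(Y^\circ/4)$ lies in the section, and we take $Z_0=Y^\circ/4$ and $c=r_1/4$.

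The only delicate point is that the inner ball must be found uniformly in $X$ and $Y'$. The argument above handles this by shrinking $B_{r_1}(Y^\circ)$ toward the minimum point $0$, using the smallness of $|X|$ and the bound on $|Y'|$ from Step 1. Everything else is direct convexity, and all constants depend only on $r_1,R_1$.
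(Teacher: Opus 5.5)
Your proof is correct and follows essentially the same route as the paper. Ray convexity, $\tilde v(Y)\ge |Y|/R_1-1$, confines $Y'$ and the tilted section to a fixed ball, which gives the upper bound. For the lower bound, convexity shrinks $B_{r_1}(Y^\circ)$ into $S_1[\tilde v](Y',X)$, and one then picks a point displaced along a unit vector $e$ with $|Te|=\beta$.

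The differences are cosmetic. The paper contracts toward $Y'$: it takes the midpoint of $Y'$ and a point $Y_1\in B_{r_1}(Y^\circ)$ with $|e\cdot(Y_1-Y')|\ge r_1/2$, and then uses $|TZ|\ge\beta|e\cdot Z|$. You instead contract toward the minimum point $0$, which gives the inner ball $B_{r_1/4}(Y^\circ/4)$ uniformly in $(X,Y')$, and then use a symmetric pair of points with the triangle inequality. Both routes reach the same constant $r_1/4$.
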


\begin{proof}
Take $X\in B_{1/(4R_1)}$ and $Y'\in\partial\tilde{v}^*(X)$.
By duality, $X\in\partial\tilde{v}(Y')$ and $\tilde{v}(Y')\le X\cdot Y'$. 
It follows from convexity that
\begin{equation}\label{l5.2:e1}
\tilde{v}(Y)\ge |Y|/R_1\qquad\text{for}\ |Y|\ge R_1.
\end{equation}
Therefore $|Y'|<R_1$. Otherwise the preceding estimates imply $|X|\ge R_1^{-1}$, a contradiction. 

For $Y\in\overline{S_1[\tilde{v}](Y',X)}$ with $|Y|\ge R_1$, we infer from \eqref{l5.2:e1} and $|X|\le (4R_1)^{-1}$ that
\[
1\ge\tilde{v}(Y)-\tilde{v}(Y')-X\cdot(Y-Y')\ge\tilde{v}(Y)-X\cdot Y\ge\frac{3|Y|}{4R_1}.
\]
Hence $S_1[\tilde{v}](Y',X)\subset B_{4R_1/3}$. As a result, for every $Y\in S_1[\tilde{v}](Y',X)$,
\[
|T(Y-Y')|\le\beta(|Y|+|Y'|)\le C_{R_1}\beta.
\] 

Let $e$ be a unit vector such that $|Te|=\beta$. 
Then $|TZ|\ge\beta|e\cdot Z|$ for every $Z\in\mathbb R^n$. Choose $Y_1\in B_{r_1}(Y^\circ)$ such that $|e\cdot(Y_1-Y')|\ge r_1/2$. Since $Y_1,Y'\in B_{R_1}$,
\[
\tilde{v}(Y_1)-\tilde{v}(Y')-X\cdot(Y_1-Y')\le 1+|X||Y_1-Y'| \le 1+\frac{2R_1}{4R_1}=\frac32.
\]
By convexity, $(Y'+Y_1)/2\in S_1[\tilde{v}](Y',X)$. The lower bound follows, since
\[
\left|T\left(\frac{Y'+Y_1}{2}-Y'\right)\right|\ge\frac\beta2|e\cdot(Y_1-Y')|\ge\frac{r_1\beta}{4}.
\]
\end{proof}

With $\rho$ fixed by \eqref{rhochoice}, we now choose $0<c_*\le\rho/4$ in \eqref{radii} and fix it throughout the paper. 

If $T$ normalizes the section at $(Du(x),x)$,
for every $x'\in B_{r_h(x)}(x)$, write
\[
x'=x+\frac h\beta\widehat X \quad\text{and}\quad X=\frac{T^{\mathsf{T}}\widehat X}{\beta},
\]
By \eqref{radii} and the inequality $\beta\le\mathcal R_h(x)$ proved in Lemma~\ref{sectionbounds},
\begin{equation}\label{s5.2:et}
 |X|\le|\widehat X|\le\frac{\beta r_h(x)}h
=c_*\frac{\beta}{\mathcal R_h(x)}\le c_*\le\rho/4.   
\end{equation}
Thus $X$ lies in a fixed ball compactly contained in $B_\rho$. 

Let $\tilde{v}$ be the potential normalized as in \eqref{normalizedpotential} at $(Du(x),x)$. 
For $x'\in\mathcal P\cap B_{r_h(x)}(x)$, let
\[
Y':=T^{-1}(Du(x')-Du(x)).
\] 
By \eqref{sourcenormalization}, $Y'\in\partial\tilde{v}^*(X)$. 
Since $y=Du(x)+TY$, the recentered sections satisfy
\[
S_h[v](Du(x'),x')=Du(x)+T S_1[\tilde{v}](Y',X).
\]
Hence Lemma~\ref{radiuscomparison}, applied to $\tilde{v}$ with inner radius $1$ and outer radius $2n$, shows
\[
c\beta\le\mathcal R_h(x')=\sup_{Y\in\overline{S_1[\tilde{v}](Y',X)}}|T(Y-Y')|\le C\beta.
\]
Since $\beta\le\mathcal R_h(x)\le2n\beta$ by \eqref{axisbounds} and $r_h=c_*h/\mathcal R_h$ by \eqref{radii}, we obtain
\begin{equation}\label{sameheight}
c_2r_h(x)\le r_h(x')\le C_2r_h(x) \qquad(x'\in\mathcal P\cap B_{r_h(x)}(x)).
\end{equation}

By Lemma~\ref{sectionbounds}, $h\mapsto r_h(x)$ is continuous and nondecreasing, and
\begin{equation}\label{shrinkage}
r_h(x)\le C_0h^{1/N}.
\end{equation}
In particular, these radii tend to zero uniformly on $\mathcal P$.

The next proposition converts the sequential limit classification into uniform small-scale radius estimates, without choosing a limit model for each base point and height.

\begin{proposition}\label{refinement}
Let $n\ge2$. Assume that whenever $x_j\in\mathcal P$, $h_j\downarrow0$ and $\Theta_{h_j}(x_j)\to0$, there is a subsequence for which
\begin{equation}\label{ranklimit}
\tilde{u}_j\to \tilde{u}\quad\text{locally uniformly on }B_{2\rho}, \qquad T_j/\beta_j\to e^*\otimes e, \qquad |e^*|=|e|=1,
\end{equation}
and one of the following alternatives holds:

\begin{enumerate}
\item The normalized boundary functions $F_j/\gamma_j$ converge locally uniformly to $P(Y)=p(e\cdot Y)$, where $p$ is nonzero and has degree $\leq d_0$.
\item $\#\{D_e\tilde{u}(X):X\in B_\rho,\ \tilde{u}\text{ is differentiable at }X\}\le d_0$.
\end{enumerate}

Then for each sufficiently small $\theta>0$ and each $\tau\in(0,1)$, 
there are positive thresholds $h_*=h_*(\theta,\tau)$
and $\Theta_*=\Theta_*(\theta,\tau)$ such that, whenever $x\in\mathcal P$, $h<h_*$ and $\Theta_h(x)<\Theta_*$, there is an exceptional set $\mathcal E_{x,h}\subset\mathcal P\cap B_{r_h(x)}(x)$, satisfying the following properties:

\begin{itemize}
\item[(i)] Every subset $\mathcal A\subset\mathcal E_{x,h}$ can be covered by at most $C\theta^{-(n-1)}$ balls of radius $\theta r_h(x)$ with centers in $\mathcal A$.
\item[(ii)] For every $x'\in(\mathcal P\cap B_{r_h(x)}(x))\setminus\mathcal E_{x,h}$,
\begin{equation}\label{refinedradius}
c\theta r_h(x)\le r_{\tau h}(x')\le Cr_h(x).
\end{equation}
\end{itemize}

The choice of $\mathcal E_{x,h}$ also depends on the fixed $\theta,\tau$. 
The constants $c,C$ depend only on $n,d_0$ and the fixed $\rho,c_*$,
but are independent of $\theta,\tau,x,h$ and $\mathcal A$.
\end{proposition}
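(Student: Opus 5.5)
The plan is to argue by contradiction and compactness, using the hypothesis of the proposition only through the limit model it provides. The upper bound in \eqref{refinedradius} is immediate. Since $h\mapsto r_h(x')$ is nondecreasing (Lemma~\ref{sectionbounds}), \eqref{sameheight} gives $r_{\tau h}(x')\le r_h(x')\le C_2r_h(x)$ for every $x'\in\mathcal P\cap B_{r_h(x)}(x)$. So everything rests on the lower bound $r_{\tau h}(x')\ge c\theta r_h(x)$. Recall $r_{\tau h}(x')=c_*\tau h/\mathcal R_{\tau h}(x')$ and $r_h(x)\approx c_*h/\beta$. The lower bound is therefore equivalent to
\[
\mathcal R_{\tau h}(x')\le C\frac{\tau}{\theta}\,\beta .
\]
Here the factor $\tau$ must appear explicitly, so that $c$ is independent of $\tau$.

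\emph{Step 1 (Fenchel estimate for a single scale).} I would work in the normalized coordinates at $(Du(x),x)$. Write $x'=x+hT^{-\mathsf T}X$, with $|X|\le\rho/4$ by \eqref{s5.2:et}, and $Y'=T^{-1}(Du(x')-Du(x))\in\partial\tilde u(X)$. The recentred section is
\[
S_{\tau h}[v](Du(x'),x')=Du(x)+TS_\tau[\tilde v](Y',X).
\]
Suppose $\tilde u$ is affine, with slope $Y'\cdot e$, along the segment $X\pm\theta e$; then
\[
\tilde u(X\pm\theta e)-\tilde u(X)-Y'\cdot(\pm\theta e)=0 .
\]
Fenchel's inequality, in the form
\[
\tilde v(Y)-\tilde v(Y')-X\cdot(Y-Y')\ge \pm\theta e\cdot(Y-Y')-\bigl[\tilde u(X\pm\theta e)-\tilde u(X)\mp\theta Y'\cdot e\bigr],
\]
then gives $|e\cdot(Y-Y')|\le\tau/\theta$ on $S_\tau[\tilde v](Y',X)$. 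The full section lies in a bounded ball by Lemma~\ref{radiuscomparison}. Since $T/\beta$ is close to $e^*\otimes e$, this yields
\[
|T(Y-Y')|\le\beta\Bigl(\frac{\tau}{\theta}+o(1)\Bigr),
\]
with $o(1)\to 0$ as $T/\beta\to e^*\otimes e$. For fixed $\tau,\theta$, taking $o(1)\le\tau/\theta$ gives the desired bound. In the prelimit I would replace exact affinity by near-affinity: the bracket is at most $\tau/2$, which keeps the same estimate up to constants.

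\emph{Step 2 (limit structure and exceptional slabs).} Suppose the conclusion fails for some $\theta,\tau$. Then there are $x_j\in\mathcal P$, $h_j\downarrow0$, $\Theta_{h_j}(x_j)\to0$, such that every candidate exceptional set of the type below fails. Pass to the subsequence given by \eqref{ranklimit}. Points $x'\in E$ have $Du(x')\in\partial\Omega^*$, so $F_j(Y')=0$.

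In alternative (1), $P_j\to p(e\cdot Y)$ in $C^2$. Hence $e\cdot Y'$ lies near the finitely many real roots of $p$ (at most $d_0$ of them), so $D_e\tilde u$ takes, in the limit, only finitely many values along the relevant points. In alternative (2) this finiteness is assumed directly.

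Consider a point where $\tilde u$ is differentiable and $D_e\tilde u$ takes one of these finitely many values. Monotonicity of $D_e\tilde u$ along lines parallel to $e$, the function $\psi(t)=\tilde u(X_0+te)$ being convex, forces $\psi$ to be affine on an interval around it, except at finitely many interval endpoints. I must control this uniformly in the transverse variable. I would do this through the constraint that $D_e\tilde u$ is monotone in $t$ and takes at most $d_0$ values: the preimages of each value form $e$-convex pieces whose endpoints lie on at most $2d_0$ graphs. I would then compactify further, passing to the limit so that these endpoints converge to finitely many positions $t_1,\dots,t_m$ of $e\cdot X$. This uniformity in the transverse variable is the step I expect to be the main obstacle, and the place where rank one of the limit is essential.

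Define $\mathcal E_{x,h}$ as the points of $\mathcal P\cap B_{r_h(x)}(x)$ whose normalized position satisfies $|e\cdot X-t_k|<2\theta$ for some $k$. Back in the source, these are at most $m\le C(d_0)$ slabs of width $\sim\theta r_h(x)$ inside a ball of radius $r_h(x)$. Any subset $\mathcal A$ of them is covered by $C\theta^{-(n-1)}$ balls of radius $\theta r_h(x)$ centred in $\mathcal A$: cover by a grid, then recentre each ball at a point of $\mathcal A$ and double the radius. This gives (i).

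For the remaining points, $\psi$ is affine on $[e\cdot X-\theta,e\cdot X+\theta]$ in the limit. Local uniform convergence $\tilde u_j\to\tilde u$ then makes the Fenchel bracket $<\tau/2$ for large $j$, and Step~1 gives (ii). This contradicts the choice of the sequence, so thresholds $h_*(\theta,\tau)$ and $\Theta_*(\theta,\tau)$ exist. The constants $c,C$ come only from Lemma~\ref{radiuscomparison}, \eqref{sameheight} and the count $m\le C(d_0)$; they depend on $n,d_0,\rho,c_*$ and not on $\theta,\tau,x,h$ or $\mathcal A$.
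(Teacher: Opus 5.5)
There is a genuine gap in Step 2, exactly where you locate the main obstacle. Your upper bound and your single-scale Fenchel estimate in Step 1 are fine. The problem is that you treat the normalized source positions $X=h^{-1}T^{\mathsf T}(x'-x)$ as filling an $n$-dimensional region. You therefore try to control the affine pieces of $t\mapsto\tilde u(X_0+te)$ uniformly in a transverse variable $X_0$, and then assert that the interval endpoints converge to finitely many values $t_1,\dots,t_m$ of $e\cdot X$. Nothing in the hypotheses gives this, and it is false for general limits. For example, $\tilde u(X)=\max\{0,X_1-\phi(X_2,\dots,X_n)\}$ with $\phi$ concave and nonconstant is convex with $D_{e_1}\tilde u\in\{0,1\}$, yet its kinks lie on the graph $\{X_1=\phi\}$, not on finitely many hyperplanes $\{X_1=t_k\}$. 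So, as written, neither your slab-shaped exceptional set nor the claim that $\psi$ is affine on $[e\cdot X-\theta,e\cdot X+\theta]$ off the slabs is justified. Your sketch also reasons at differentiability points of $\tilde u$. In alternative (1), however, you only know something at limiting positions, where $\tilde u$ need not be differentiable, and these positions need not fill an interval.

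The missing idea is that there is no transverse variable. Write $x'=x_j+\frac{h_j}{\beta_j}\widehat X_j$ with $|\widehat X_j|\le c_*\le\rho/4$. Then $X_j=T_j^{\mathsf T}\widehat X_j/\beta_j$, and $T_j/\beta_j\to e^*\otimes e$ forces $X_j\to(e^*\cdot\widehat X)e$ along subsequences. Since $r_h(x)\approx h/\beta$ is the smallest semiaxis of the source ellipsoid, $B_{r_h(x)}(x)$ becomes, in normalized coordinates, an ellipsoid with semiaxes $c_*a_i/\beta$. This ellipsoid collapses onto a segment of the single line $\R e$ through $0$, so only $\psi(t)=\tilde u(te)$ matters.

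In case (1), use $F_j(Y_j)=0$ and $Y_j\to Y\in\partial\tilde u(t_0e)$; in case (2), approximate by differentiability points. Either way one gets $\partial\psi(t_0)\cap\mathcal T\ne\varnothing$ at every limiting position $t_0$, with $\#\mathcal T\le d_0$. For each $s\in\mathcal T$, the set $\mathcal I_s=\{t:s\in\partial\psi(t)\}$ is a closed interval on which $\psi$ is affine. Hence there are at most $2d_0$ endpoints $t_i$, and a limiting position at distance at least $2\vartheta$ from all of them lies, together with a $\vartheta$-neighbourhood, in one $\mathcal I_s$.

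The slabs $\{|\frac{\beta_j}{h_j}e^*\cdot(x'-x_j)-t_i|\le2\vartheta\}$, with $\vartheta$ a small multiple of $\theta$, then serve as $\mathcal E_{x_j,h_j}$. With this choice your Fenchel estimate and covering argument go through. You need one more compactness argument on violating sequences to make the lower bound uniform for large $j$.
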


\begin{proof}
Fix $\theta,\tau$. Assume no thresholds $h_*,\Theta_*$ exist.
Choose $x_j\in\mathcal P$ and $h_j\downarrow0$, with $\Theta_{h_j}(x_j)<1/j$, such that no exceptional set satisfies both (i) and (ii).
By the assumed sequential property, pass to a subsequence satisfying \eqref{ranklimit} and alternative (1) or (2).
We construct exceptional sets $\mathcal E_{x_j,h_j}$ satisfying both conclusions for all large $j$, 
obtaining a contradiction.

\smallskip\noindent \emph{Step 1. Exceptional hyperplanes.} 
In case (1), let $\mathcal T$ be the set of real roots of $p$. In case (2), 
\[
\mathcal T:=\{e\cdot D\tilde{u}(X):X\in B_\rho,\ \tilde{u}\text{ is differentiable at }X\}.
\]
In either case, $\#\mathcal T\le d_0$.
Set $\mathcal I=(-\rho/2,\rho/2)$ and $\psi(t)=\tilde{u}(te)$ for $t\in\mathcal I$.

Take any sequence $x'_j\in\mathcal P\cap B_{r_{h_j}(x_j)}(x_j)$ and define
\[
\widehat X_j:=\frac{\beta_j}{h_j}(x'_j-x_j)\quad\text{and}\quad X_j:=\frac{T_j^{\mathsf{T}}\widehat X_j}{\beta_j}.
\]
By \eqref{s5.2:et}, $|X_j|\le|\widehat X_j|\le\rho/4$. 
After passing to a subsequence, $\widehat X_j\to\widehat X$, and \eqref{ranklimit} implies
\[
X_j\to(e^*\cdot\widehat X)e.
\]
Since $|\widehat X|\le\rho/4$, we have $e^*\cdot\widehat X\in[-\rho/4,\rho/4]\Subset\mathcal I$. We claim that
\begin{equation}\label{contactslopes}
\partial\psi(e^*\cdot\widehat X)\cap\mathcal T\ne\varnothing.
\end{equation}

In case (1), set $Y_j:=T_j^{-1}(Du(x'_j)-y_j)$. By \eqref{sourcenormalization}, $Y_j\in\partial\tilde{u}_j(X_j)$. 
Local uniform convergence controls these subgradients, so a subsequence satisfies
\[
Y_j\to Y\in\partial \tilde{u}((e^*\cdot\widehat X)e).
\]
Since $\mathcal P\subset E$, these target points satisfy $F_j(Y_j)=0$. 
By \eqref{polynomiallimit}, $p(e\cdot Y)=0$. 
Restricting the supporting inequality for $\tilde{u}$ to the line $te$ gives $e\cdot Y\in\partial\psi(e^*\cdot\widehat X)$, proving \eqref{contactslopes}.

In case (2), for each $t\in\mathcal I$, approach $te$ by differentiability points of $\tilde{u}$ in $B_\rho$. 
A subsequence of the gradients converges to $q\in\partial\tilde{u}(te)$, with $e\cdot q\in\mathcal T$.
Restricting the supporting inequality to the line $\mathbb R e$ gives $\partial\psi(t)\cap\mathcal T\ne\varnothing$. 
Taking $t=e^*\cdot\widehat X$ proves \eqref{contactslopes}.

For $s\in\mathcal T$, consider
\[
\mathcal I_s=\{t\in\mathcal I:s\in\partial\psi(t)\}.
\]
Each nonempty $\mathcal I_s$ is a relatively closed interval on which $\psi$ is affine with slope $s$.
Denote the endpoints of these intervals lying in $\mathcal I$ by $t_i$. 
The intervals $\mathcal I_s$ need not meet.
Since $\mathcal T$ contains at most $d_0$ elements, there are at most $2d_0$ such endpoints. 
By \eqref{contactslopes}, every limiting position lies over one of the intervals $\mathcal I_s$. 
Thus, away from the hyperplanes $\{\widehat X:e^*\cdot\widehat X=t_i\}$, it lies in the interior of an interval on which $\psi$ is affine.

\smallskip\noindent \emph{Step 2. Exceptional covering.} Take $\vartheta$ to be a sufficiently small fixed multiple of $\theta$. Define
\[
\mathcal E_{x_j,h_j} =\Bigl\{x'\in\mathcal P\cap B_{r_{h_j}(x_j)}(x_j): \Big|\frac{\beta_j}{h_j}e^*\cdot(x'-x_j)-t_i\Big| 
\le2\vartheta\text{ for some }i\Bigr\}.
\]
By \eqref{radii} and \eqref{axisbounds}, $h_j/\beta_j$ is comparable to $r_{h_j}(x_j)$. 
Thus $\mathcal E_{x_j,h_j}$ lies in at most $2d_0$ slabs of width $C\vartheta r_{h_j}(x_j)$. 
For each slab, its intersection with $B_{r_{h_j}(x_j)}(x_j)$ can be covered by at most $C\vartheta^{-(n-1)}$ balls of radius $C\vartheta r_{h_j}(x_j)$. 
Choose the fixed multiple so that this radius is at most $\theta r_{h_j}(x_j)/2$. 
Given $\mathcal A\subset\mathcal E_{x_j,h_j}$, retain only those balls in the preceding cover that meet $\mathcal A$. For each retained ball, choose a point of $\mathcal A$ inside it as the new center. Doubling the radii gives (i).

\smallskip\noindent \emph{Step 3. Radius estimates.} 
The upper bound follows from $0<\tau<1$ and \eqref{sameheight}:
\[
r_{\tau h_j}(x')\le r_{h_j}(x')\le C_2r_{h_j}(x_j).
\]
For the lower bound, take a sequence of points outside $\mathcal E_{x_j,h_j}$. 
Pass to a subsequence with normalized positions $\widehat X_j\to\widehat X$, and set $t_0=e^*\cdot\widehat X$. 
Step 1 and the definition of $\mathcal E_{x_j,h_j}$ imply that $\psi$ is affine on $[t_0-\vartheta,t_0+\vartheta]$. 
The corresponding target points satisfy, after a subsequence,
\[
X_j=\frac{T_j^{\mathsf{T}}\widehat X_j}{\beta_j}\to t_0e, \qquad Y_j\in\partial \tilde{u}_j(X_j), \qquad Y_j\to Y_0\in\partial \tilde{u}(t_0e).
\]
Since $\psi$ is affine on this interval, $e\cdot Y_0=\psi'(t_0)$. Hence the nonnegative quantities
\[
d_j^\pm=\tilde{u}_j(X_j\pm\vartheta e)-\tilde{u}_j(X_j) \mp\vartheta e\cdot Y_j\to0.
\]

The normalized target section at $x'$ is $S_\tau[\tilde{v}_j](Y_j,X_j)$. 
Since $\tilde{u}_j=\tilde{v}_j^*$ and $Y_j\in\partial\tilde{u}_j(X_j)$,
\[
\tilde{v}_j(Y)\ge(X_j\pm\vartheta e)\cdot Y-\tilde{u}_j(X_j\pm\vartheta e)\quad\text{and}\quad
\tilde{v}_j(Y_j)=X_j\cdot Y_j-\tilde{u}_j(X_j).
\]
Subtracting the equality from the inequality and rearranging with the definition of $d_j^\pm$ gives
\[
\tilde{v}_j(Y)-\tilde{v}_j(Y_j)-X_j\cdot(Y-Y_j)\ge\pm\vartheta e\cdot(Y-Y_j)-d_j^\pm.
\]
Hence
\begin{equation}\label{sectionwidth}
Y\in S_\tau[\tilde{v}_j](Y_j,X_j)\quad\Longrightarrow\quad |e\cdot(Y-Y_j)|\le\frac{\tau+\max(d_j^+,d_j^-)}{\vartheta}.
\end{equation}
The proof of Lemma~\ref{radiuscomparison} bounds these sections in a fixed ball. 
Using \eqref{sectionwidth} and \eqref{ranklimit},
\[
\frac{\mathcal R_{\tau h_j}(x')}{\beta_j} =\sup_{Y\in S_\tau[\tilde{v}_j](Y_j,X_j)} \left|\frac{T_j}{\beta_j}(Y-Y_j)\right| \le\frac\tau\vartheta+o(1) \le\frac{2\tau}{\vartheta}
\]
for large $j$, with $\vartheta,\tau$ fixed. By \eqref{radii} and \eqref{axisbounds}, $r_{h_j}(x_j)\le c_*h_j/\beta_j$. Hence
\begin{equation}\label{affineradius}
r_{\tau h_j}(x') =\frac{c_*\tau h_j}{\mathcal R_{\tau h_j}(x')} \ge\frac{c_*\vartheta h_j}{2\beta_j} \ge\frac{\vartheta}{2}r_{h_j}(x_j).
\end{equation}
This estimate holds uniformly outside $\mathcal E_{x_j,h_j}$ for large $j$. Otherwise, a sequence of violating points would have a convergent subsequence to which the preceding argument applies. Since $\vartheta$ is a fixed multiple of $\theta$, this proves (ii). The constants in (i) and (ii) are independent of $\theta,\tau$. Both conclusions now hold for large $j$, contradicting the choice of the sequence.
\end{proof}

\subsection{The covering argument}\label{covering}
{~}

\begin{proposition}\label{zeromass}
Under the hypotheses of Proposition~\ref{refinement}, $\dim_{\mathrm H}\mathcal P_0\le n-1$.
\end{proposition}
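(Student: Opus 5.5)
Fix $\delta>0$. The plan is to show $\HH^{n-1+\delta}(\mathcal P_0)=0$ through a self-improving estimate on a normalized Hausdorff content, as the introduction outlines. For $x\in\mathcal P_0$ and small $h$, define the weight $W(x,h)=r_h(x)^{n-1+\delta/2}h^{\delta/(2N)}$. Because $r_h\le C_0h^{1/N}$ by \eqref{shrinkage}, we have $r_h^{n-1+\delta}\le C\,W(x,h)$. It is therefore enough to cover $\mathcal P_0$ by balls $B_{r_{h_i}(x_i)}(x_i)$ whose total weight is small.

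The procedure is one refinement step, iterated. Fix $\tau\in(0,1)$ and a small $\theta$. Apply Proposition~\ref{refinement} with these choices, which produces $h_*,\Theta_*$. Since $\Theta_h(x)\to0$ on $\mathcal P_0$, we can split $\mathcal P_0$ into countably many pieces $\mathcal P_0^{(k)}$. On each piece, $\Theta_h(x)<\Theta_*$ for every $h<h_k$, with $h_k\le h_*$.

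Fix one piece and consider a ball $B=B_{r_h(x)}(x)$ with $x\in\mathcal P_0^{(k)}$ and $h<h_k$. The points of $\mathcal P_0^{(k)}\cap B$ split into two groups.
\begin{itemize}
\item The exceptional points $\mathcal E_{x,h}$ are covered by at most $C\theta^{-(n-1)}$ balls of radius $\theta r_h(x)$ centered in the set. For each such ball we choose a height $h'\le h$ with $r_{h'}(x')=\theta r_h(x)$, using continuity and monotonicity from Lemma~\ref{sectionbounds} together with \eqref{sameheight} to keep the balls comparable. Their total weight is at most
\[
C\theta^{-(n-1)}\bigl(\theta r_h\bigr)^{n-1+\delta/2}h^{\delta/(2N)}=C\theta^{\delta/2}W(x,h).
\]
\item Every remaining point $x'$ satisfies $c\theta r_h\le r_{\tau h}(x')\le Cr_h$. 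By Vitali, these points are covered by a disjoint-up-to-dilation family of balls $B_{r_{\tau h}(x')}(x')$. Their number is bounded by $C\theta^{-n}$, since the radii are comparable to $r_h$ up to the factor $\theta$. Their total weight is at most
\[
C\theta^{-n}\,C^{n-1+\delta/2}\,r_h^{n-1+\delta/2}(\tau h)^{\delta/(2N)}=C\theta^{-n}\tau^{\delta/(2N)}W(x,h).
\]
\end{itemize}
We first choose $\theta$ so that $C\theta^{\delta/2}\le1/4$, and then $\tau$ so that $C\theta^{-n}\tau^{\delta/(2N)}\le1/4$. With these choices, one step replaces $B$ by balls of total weight at most $\frac12W(x,h)$, all drawn from the same admissible class but at smaller heights.

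To turn this into a vanishing statement, define
\[
M(\eta)=\sup_{x,h}\ \frac{\inf\Bigl\{\sum W(x_i,h_i):\ \text{covers of }\mathcal P_0^{(k)}\cap B_{r_h(x)}(x)\text{ by admissible balls with }h_i\le\eta\Bigr\}}{W(x,h)}.
\]
The sup runs over $x\in\mathcal P_0^{(k)}$ and $h<h_k$; the inf is over covers by admissible balls $B_{r_{h_i}(x_i)}(x_i)$, $x_i\in\mathcal P_0^{(k)}$, with $h_i\le\eta$.

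The first task is to show $M<\infty$, which is a priori unclear. We get it by applying the refinement step once to reach any target height $\eta$, or by using the trivial cover with $\eta=h$, giving $M\le1$ at that level. The recursion then gives $M\le M/2$, so $M=0$ at all levels, uniformly as $\eta\to0$. Covering $\mathcal P_0^{(k)}$ by countably many initial balls $B_{r_h(x)}(x)$ (it has bounded diameter, and $r_h$ attains small values) yields $\HH^{n-1+\delta}(\mathcal P_0^{(k)})=0$. Letting $\delta\downarrow0$ gives $\dim_{\mathrm H}\mathcal P_0\le n-1$.

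The main obstacle is making the self-similar recursion rigorous. Iterated refinements must stay inside the class where Proposition~\ref{refinement} applies: centers in $\mathcal P_0^{(k)}$ and heights below $h_k$. Heights for the exceptional balls must be chosen consistently so that the radii match. The most delicate part is bounding the number of non-exceptional balls independently of $\tau$. I would obtain this from the upper bound $r_{\tau h}(x')\le Cr_h$ and the lower bound $c\theta r_h$, using Besicovitch rather than Vitali, so that overlaps are controlled by a dimensional constant. The definition of $M$ as a supremum over normalized contents is designed so that the finiteness of $M$ is justified before dividing.
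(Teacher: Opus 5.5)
Your proposal is correct and follows essentially the paper's proof. It uses the same weight $r_h^{n-1+\delta/2}h^{\delta/(2N)}$, the same splitting of $\mathcal P_0$ into pieces where $\Theta_h<\Theta_*$ below a fixed height, the same exceptional/good decomposition from Proposition~\ref{refinement} with $\theta$ fixed before $\tau$, and the same $M\le M/2$ self-improvement.

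The only differences are minor. The paper puts $\HH^s_\infty(E_m\cap B_{r_h(x)}(x))$ in the numerator of $M$, so $M<\infty$ is immediate from $r_h\le C_0h^{1/N}$. In your version, taking $\eta\ge h_k$ already makes the trivial cover give $M\le1$, and no limit $\eta\to0$ is needed, since zero content gives zero measure. The paper also covers the good points by a maximal $c\theta r_h(x)/2$-separated net instead of Vitali or Besicovitch.
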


\begin{proof}
\smallskip\noindent \emph{Step 1. Covering contraction.} Fix $\delta>0$ and put
\[
s=n-1+\delta,\qquad d=n-1+\delta/2,\qquad a=\delta/(2N).
\]
Choose $\theta<c_2$ so small that $C\theta^{\delta/2}<1/4$, and then choose $\tau>0$ so small that $C\theta^{-n}\tau^a<1/4$. The constants here include those in the estimates below. With these $\theta,\tau$, take $h_* =h_*(\theta,\tau)$ and $\Theta_*=\Theta_*(\theta,\tau)$ 
from Proposition~\ref{refinement}. Fix $m$ and write
\[
E_m=\{x\in\mathcal P_0:\Theta_h(x)<\Theta_* \text{ for every }0<h<1/m\} \quad\text{and}\quad \bar h=\min\{h_*,1/m\}.
\]
These sets cover $\mathcal P_0$. Fix $x\in E_m$ and $h\in (0,\bar h)$.

For each $E_m$, we consider the following decompositions
\[
\mathcal A=\{x'\in E_m\cap B_{r_h(x)}(x):r_{\tau h}(x')\notin[c\theta r_h(x),Cr_h(x)]\} \quad\text{and}\quad \mathcal G=(E_m\cap B_{r_h(x)}(x))\setminus\mathcal A.
\]
By Proposition~\ref{refinement} (ii), $\mathcal A\subset\mathcal E_{x,h}$. 
According to Proposition~\ref{refinement} (i)
\[
\mathcal A\subset \bigcup_{i\in I} B_{\theta r_h(x)}(x_i),\quad\text{with} \ x_i\in\mathcal A \ \text{and}\ \# I\le C\theta^{-(n-1)}. 
\] 
For $\mathcal G$, choose a maximal $c\theta r_h(x)/2$-separated family of centers $\{x_j\}_{j\in J}\subset\mathcal G$. The balls of radius $c\theta r_h(x)/4$ about them are disjoint in $B_{2r_h(x)}(x)$, so there are at most $C\theta^{-n}$ centers. Since $r_{\tau h}(x_j)\ge c\theta r_h(x)$ by the definition of $\mathcal G$, maximality of the separated family then ensures
\[
\mathcal G\subset\bigcup_{j\in J}B_{r_{\tau h}(x_j)}(x_j),\quad\text{with} \ x_j\in\mathcal G, \ \# J\le C\theta^{-n}\
\text{and}\ r_{\tau h}(x_j)\le Cr_h(x).
\]

Together these balls cover $E_m\cap B_{r_h(x)}(x)$. 
We also have
\begin{equation}\label{contraction}
\begin{aligned}
\sum_{i\in I} (\theta r_h(x))^d h^a+\sum_{j\in J} r_{\tau h}(x_j)^d (\tau h)^a
&\le C\theta^{-(n-1)}(\theta r_h(x))^dh^a +C\theta^{-n}r_h(x)^d(\tau h)^a\\
&\le C\bigl(\theta^{\delta/2}+\theta^{-n}\tau^a\bigr)r_h(x)^d h^a \le\frac12 r_h(x)^d h^a.
\end{aligned}
\end{equation}
\smallskip\noindent \emph{Step 2. Hausdorff content.} Set
\[
M=\sup_{\substack{x\in E_m\\
0<h<\bar h}} \frac{\HH^s_\infty(E_m\cap B_{r_h(x)}(x))}{r_h(x)^d h^a},
\]
with $M=0$ if $E_m$ is empty. This supremum is finite, since \eqref{shrinkage} gives
\[
\HH^s_\infty(E_m\cap B_{r_h(x)}(x))\le C_sr_h(x)^s \le C r_h(x)^d h^a.
\]
All covering centers lie in $E_m$. For each $i\in I$, \eqref{sameheight} gives $r_h(x_i)\ge c_2r_h(x)>\theta r_h(x)$. Continuity in the height and $r_t(x_i)\to0$ as $t\downarrow0$ give a height $t\in(0,h]$ with $r_t(x_i)=\theta r_h(x)$. Since $t\le h<\bar h$, the definition of $M$ gives
\[
\HH^s_\infty(E_m\cap B_{\theta r_h(x)}(x_i))\le M(\theta r_h(x))^d t^a\le M(\theta r_h(x))^d h^a.
\]
For each $j\in J$, the corresponding height is $\tau h<\bar h$, so
\[
\HH^s_\infty(E_m\cap B_{r_{\tau h}(x_j)}(x_j))\le M r_{\tau h}(x_j)^d(\tau h)^a.
\]
Apply subadditivity to the cover in Step 1 and use \eqref{contraction}:
\begin{align*}
\HH^s_\infty(E_m\cap B_{r_h(x)}(x)) &\le\sum_{i\in I}\HH^s_\infty(E_m\cap B_{\theta r_h(x)}(x_i)) +\sum_{j\in J}\HH^s_\infty(E_m\cap B_{r_{\tau h}(x_j)}(x_j))\\
&\le M\left(\sum_{i\in I}(\theta r_h(x))^d h^a+\sum_{j\in J}r_{\tau h}(x_j)^d(\tau h)^a\right) \le\frac M2 r_h(x)^d h^a.
\end{align*}
Dividing by $r_h(x)^dh^a$ and taking the supremum over $x\in E_m$ and $0<h<\bar h$ gives $M\le M/2$, hence $M=0$. A countable subcover of $E_m$ by the balls in this supremum shows
\[
\HH^s_\infty(E_m)=0.
\]
Since zero Hausdorff content is equivalent to zero Hausdorff measure, $\HH^s(E_m)=0$. Taking the countable union over $m$ and letting $\delta\downarrow0$ completes the proof.

\end{proof}

\section{Higher dimensions}\label{higherdimensions}

Let $n\ge3$. We retain the localization and fixed sets $\mathcal P$ from Section~\ref{localization}. Lemma~\ref{radiuscomparison} is valid in every dimension. To use Propositions~\ref{refinement} and~\ref{zeromass}, we must establish the limit classification assumed in Proposition~\ref{refinement}. We introduce an additional finite measure on the subgradient graph to estimate the transverse derivatives of the normalized boundary polynomial. Together with the target and source mass estimates, these estimates yield the alternatives in Proposition~\ref{classification}.

For the fixed potential $v$ in \eqref{fixeddual}, let $v_1$ be its Moreau regularization,
\begin{equation}\label{regularization}
v_1(z)=\min_{y'\in\R^n}\bigl\{v(y')+\tfrac12|z-y'|^2\bigr\}, \qquad z\in\R^n.
\end{equation}
The minimizer $y(z)$ is unique. Take $x(z)=z-y(z)$.
By the standard properties of Moreau regularization~\cite{M65}, 
$v_1\in C^{1,1}$, both $v_1$ and $\tfrac12|z|^2-v_1$ are convex, and
\begin{equation}\label{s6:e0}
Dv_1(z)=x(z).
\end{equation}
The mapping $z\mapsto y(z)$ was introduced in \cite{M62}, and can be written as
\begin{equation}\label{s6:e1}
y(z) = z-  Dv_1(z) .
\end{equation}
The maps $x,y$ are Lipschitz continuous, $x(z)\in\partial v(y(z))$ 
and every pair $x\in\partial v(y)$ arises from exactly one $z=x+y$.

Consider $K(z)=D_zy$. Then $K = I -D^2v_1$ almost everywhere. By \eqref{s6:e0} and \eqref{s6:e1}
\begin{equation}\label{graphderivatives}
K=K^{\mathsf{T}},\qquad 0\le K\le I,\qquad D_zx=I-K.
\end{equation}
All graph integrals below are restricted to a fixed compact subset of the graph
\[
\{(y,x)\in\R^n\times\R^n:x\in\partial v(y)\},
\]
with pairs $(y,x)\in\Omega_0^*\times\Omega$, chosen to contain the sets used below. 
The margins in Definition~\ref{preparedset} allow this choice uniformly at small heights.

\subsection{Weighted measures on the graph}\label{graphmeasures}
{~}

Let $\Gamma_0\Subset\Omega_0^*\times\Omega$ be a fixed compact graph subset and set
\[
\mathcal Z_0:=\{x+y:(y,x)\in\Gamma_0\}
=\{z\in\R^n:(y(z),x(z))\in\Gamma_0\}.
\]
Define $\widetilde\nu_\epsilon$ on the $z$-parameter space $\R^n$ by the density below on $\mathcal Z_0$, extended by zero outside $\mathcal Z_0$. Its pushforward $\widetilde\mu_\epsilon$ under $z\mapsto x(z)$ is a measure on the source space $\R^n$:
\begin{equation}\label{graphmeasure}
\begin{gathered}
d\widetilde\nu_\epsilon(z)=\mathbf1_{\{F(y(z))\ne0\}} |F(y(z))|^{-1+\epsilon} DF(y(z))^{\mathsf{T}}K(z)DF(y(z))\,dz\\
\widetilde\mu_\epsilon=x_\#\widetilde\nu_\epsilon.
\end{gathered}
\end{equation}
The finiteness of these measures follows directly by integration by parts on the graph.

\begin{lemma}\label{graphfiniteness}
The measures $\widetilde\nu_\epsilon$ and $\widetilde\mu_\epsilon$ are finite.
\end{lemma}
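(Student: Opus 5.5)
The plan is to recognize the density in \eqref{graphmeasure} as a divergence in the $z$-variable and then integrate by parts against a cutoff. Since $\widetilde\mu_\epsilon=x_\#\widetilde\nu_\epsilon$ has the same total mass as $\widetilde\nu_\epsilon$, it suffices to prove $\widetilde\nu_\epsilon(\R^n)<\infty$. The key identity is the chain rule for the Lipschitz map $z\mapsto F(y(z))$. By \eqref{graphderivatives}, $D_z\bigl(F(y(z))\bigr)=K(z)^{\mathsf T}DF(y(z))=K(z)DF(y(z))$ almost everywhere. Hence, for $\Phi(s)=\epsilon^{-1}\operatorname{sign}(s)|s|^\epsilon$, whose derivative is $|s|^{-1+\epsilon}$, we have formally
\[
DF(y(z))\cdot D_z\bigl[\Phi(F(y(z)))\bigr]=|F(y(z))|^{-1+\epsilon}DF(y(z))^{\mathsf T}K(z)DF(y(z)),
\]
which is exactly the density of $\widetilde\nu_\epsilon$.

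\emph{Cutoff.} Since $\mathcal Z_0$ is compact and $y$ is continuous, I choose $\eta\in C_c^\infty(\R^n)$ with $0\le\eta\le1$ and $\eta=1$ on $\mathcal Z_0$, supported in a neighbourhood on which $y(z)$ stays in a slightly larger compact subset of the chart $\Omega_0^*$. On that subset $F$ and its first and second derivatives are bounded.

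\emph{Truncation.} To avoid the singularity of $\Phi'$, I replace $\Phi$ by $\Phi_k$ with $\Phi_k(0)=0$ and $\Phi_k'(s)=\min(|s|^{-1+\epsilon},k)$. Then $\Phi_k$ is Lipschitz, $\Phi_k\circ F\circ y$ is Lipschitz, and $|\Phi_k(s)|\le|\Phi(s)|\le\epsilon^{-1}|s|^\epsilon$. On the set $\{F(y(z))=0\}$ the gradient of $F\circ y$ vanishes almost everywhere, so that set contributes nothing to the left-hand side below.

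\emph{Integration by parts.} For Lipschitz functions the divergence theorem gives
\[
\int\eta\,\Phi_k'(F(y))\,DF(y)^{\mathsf T}K\,DF(y)\,dz
=-\int\Phi_k(F(y(z)))\,\operatorname{div}_z\bigl(\eta(z)DF(y(z))\bigr)\,dz .
\]
Here $DF\circ y$ is Lipschitz, and by the chain rule $\operatorname{div}_z(DF(y(z)))=\operatorname{tr}\bigl(D^2F(y(z))K(z)\bigr)$. This quantity is bounded because $0\le K\le I$ and $D^2F$ is bounded on the chart. Also $|D\eta|$ is bounded, and $|\Phi_k(F(y))|\le\epsilon^{-1}\sup|F|^\epsilon$ is bounded. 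Hence the right-hand side is bounded independently of $k$.

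\emph{Limit.} The left integrand is nonnegative because $K\ge0$, and it increases in $k$. Monotone convergence therefore yields
\[
\widetilde\nu_\epsilon(\R^n)\le\int\eta\,|F(y)|^{-1+\epsilon}\mathbf 1_{\{F(y)\ne0\}}DF(y)^{\mathsf T}K\,DF(y)\,dz<\infty .
\]

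The main obstacle is justifying the chain rule and the integration by parts for merely Lipschitz compositions. In particular, one must handle the zero set of $F\circ y$ and the almost-everywhere identity $K=D_zy$ from \eqref{graphderivatives}. The truncation $\Phi_k$ together with the standard Lipschitz chain rule (the gradient vanishes a.e. on level sets) handles this. The nonnegativity of $K$ makes the monotone limit legitimate, so no cancellation issues arise.
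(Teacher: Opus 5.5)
Your proof is correct and follows the paper's argument. You integrate by parts against a cutoff using a regularized primitive of $|s|^{-1+\epsilon}$. You bound the resulting right-hand side via $|F|^\epsilon/\epsilon$, $0\le K\le I$ and the $C^2$ bounds on $F$, and you pass to the limit by monotone convergence, with the pushforward having the same total mass. The only difference is cosmetic: you truncate with $\min(|s|^{-1+\epsilon},k)$ where the paper uses $(|s|+t)^{-1+\epsilon}$, and your remark that $K\,DF(y)=D_z(F\circ y)$ vanishes a.e.\ on $\{F\circ y=0\}$ correctly disposes of the zero set.
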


\begin{proof}
Choose a nonnegative $\chi\in C_c^1(y^{-1}(\Omega_0^*))$ equal to one on $\mathcal Z_0$. 
For $t>0$, consider
\[
b_t(s)=\int_0^s(|r|+t)^{-1+\epsilon}\,dr \quad\text{and}\quad |b_t(s)|\le |s|^\epsilon/\epsilon.
\]
Since $K=Dy$ is symmetric, the Lipschitz chain rule gives, almost everywhere,
\[
\begin{aligned}
D_z[b_t(F(y))]&=(|F(y)|+t)^{-1+\epsilon}KDF(y),\\
\operatorname{div}_z(\chi DF(y))&=D\chi\cdot DF(y)+\chi\,\tr(D^2F(y)K).
\end{aligned}
\]
Integration by parts, using the compact support of $\chi$, therefore gives
\begin{align*}
&\int_{\R^n}\chi\cdot(|F(y)|+t)^{-1+\epsilon} DF(y)^{\mathsf{T}}KDF(y)\,dz\\
&\qquad=-\int_{\R^n} b_t(F(y))\bigl[D\chi\cdot DF(y) +\chi\,\tr(D^2F(y)K)\bigr]\,dz\le C.
\end{align*}
The constant $C$ is independent of $t$, by the bound for $b_t$, \eqref{graphderivatives}, and the bounds for $F,DF,D^2F$ on the fixed compact set $y(\operatorname{supp}\chi)$. Restrict the left side to $\mathcal Z_0\cap\{F\circ y\ne0\}$ and let $t\downarrow0$. Monotone convergence proves that $\widetilde\nu_\epsilon$ is finite. Its pushforward $\widetilde\mu_\epsilon$ is finite as well.
\end{proof}

\subsection{Regularized sections}\label{regularizedsections}
{~}

For a pair $(y_0,x_0)$ set $z_0=x_0+y_0$. 
According to \eqref{regularization}, $v_1(z_0) =v(y_0)+\frac12|x_0|^2$, and
\[
v_1(z_0+z) = \min_y\big\{ v(y_0+y)+\frac12 |x_0+z-y|^2\big\}.
\]
Completing the square then shows that
\begin{equation}\label{regularizedidentity}
v_1(z_0+z)-v_1(z_0)-x_0\cdot z =\min_y\left\{v(y_0+y)-v(y_0)-x_0\cdot y +\tfrac12|z-y|^2\right\}.
\end{equation}
Together with $Dv_1(z_0) =x_0$, see \eqref{s6:e0}, we conclude that
\begin{equation}\label{regularizedinclusions}
\tfrac12\bigl(S_h[v](y_0,x_0)-y_0\bigr)+B_{\sqrt h} \subset S_h[v_1](z_0)-z_0 \subset S_h[v](y_0,x_0)-y_0+B_{\sqrt{2h}}.
\end{equation}
For the first inclusion, write $z=\xi/2+q$ with $\xi\in S_h[v](y_0,x_0)-y_0$ and $|q|<\sqrt h$. 
Taking $y=\xi/2$ in \eqref{regularizedidentity} and using convexity shows
\[
v_1(z_0+z)-v_1(z_0)-x_0\cdot z\le\frac12\bigl(v(y_0+\xi)-v(y_0)-x_0\cdot\xi\bigr)+\frac12|q|^2<h.
\]
On the other side, take $z\in S_h[v_1](z_0)-z_0$ and 
let $y$ be the minimizer in \eqref{regularizedidentity}.
Then
\[
v(y_0+y)-v(y_0)-x_0\cdot y \le h -\tfrac12|z-y|^2.
\]
Hence $y\in S_h[v](y_0,x_0)-y_0$.
Since $v(y_0+y)-v(y_0)-x_0\cdot y\ge0$, we obtain $|z-y|\le \sqrt{2h}$.
The second inclusion follows.

Denote by $b_h+T_hB_1$ the inner normalizing ellipsoid of $S_h[v](y_0,x_0)$.
Suppose $T_h=O_h\operatorname{diag}(a_i)$ for some orthogonal matrix $O_h$. Let
\begin{equation}\label{regularizedmatrix}
a_i'=\sqrt{a_i^2+h},\qquad \widehat T_h=O_h\operatorname{diag}(a_i'),\qquad \beta_h'=\sqrt{\beta(h)^2+h}.
\end{equation}
Then $(\det \widehat T_h)^2=\prod_i(a_i^2+h)$ and $\beta_h'\approx\beta(h)$ by \eqref{determinant}. 
The product of the largest $n-1$ semiaxes of the ellipsoid $x_0+h\widehat T_h^{-\mathsf{T}}B_1$ is
\begin{equation}\label{regularizedcost}
\cc'(x_0,h)=\frac{\beta_h'h^{n-1}}{|\det \widehat T_h|}.
\end{equation}
For $|Z|<1$, take $z=b_h-y_0+\widehat T_hZ$ and $y=b_h-y_0+T_hZ\in S_h[v](y_0,x_0)-y_0$ in \eqref{regularizedidentity}. 
Then
\begin{align}\label{graphinterior}
v_1(x_0+b_h+\widehat T_hZ)-v_1(z_0) -x_0\cdot(b_h-y_0+\widehat T_hZ)\le h+\frac12|(\widehat T_h-T_h)Z|^2
<\frac32h,
\end{align}
since $\|\widehat T_h-T_h\|\le\sqrt h$ in the principal frame. In particular,
\[
x_0+b_h+\widehat T_hB_1\subset S_{4h}[v_1](z_0).
\]
With the fixed $\rho$ from \eqref{rhochoice}, let us set
\begin{equation}\label{graphset}
G_h(x_0)=x_0+b_h+\rho \widehat T_hB_1.
\end{equation}
In the coordinates $z=x_0+b_h+\widehat T_hZ$, the set $G_h(x_0)$ corresponds to $B_\rho$ 
and the image of the section $S_{4h}[v_1](z_0)$ in these coordinates contains $B_1$. Thus $G_h(x_0)$ has distance at least $1-\rho>1/2$ from the section boundary in these coordinates, uniformly in $x_0$ and $h$.

To apply Lemma~\ref{selectionlemma} to $v_1$, 
we use base point $z_0$, supporting slope $x_0$, height $h$, 
linear map $\widehat T_h$ and normalized center
$Z_0:=\widehat T_h^{-1}(b_h-y_0)$. In the coordinates
$z=z_0+\widehat T_hZ$, the corresponding normalized potential is
\begin{equation}\label{normalizedregularization}
\widetilde v_{1,h}(Z)
=\frac{v_1(z_0+\widehat T_hZ)-v_1(z_0)-x_0\cdot\widehat T_hZ}{h}.
\end{equation}
Since $\|\widehat T_h^{-1}T_h\|\le1$ and $S_h[v](y_0,x_0)\subset b_h+nT_hB_1$,
we have
\[
|Z_0|\le |T_h^{-1}(b_h-y_0)|\le n.
\]
Estimate~\eqref{graphinterior} shows $\widetilde v_{1,h}\le3/2$ on $B_1(Z_0)$.
This verifies \eqref{localbounds} for $\widetilde v_{1,h}$.
Set $R:=2n+\sqrt2$. Since $\sqrt h\,\|\widehat T_h^{-1}\|\le1$,
we infer by the second conclusion in \eqref{regularizedinclusions} that
\[
\{\widetilde v_{1,h}<1\}
=\widehat T_h^{-1}\big(S_h[v_1](z_0)-z_0\big)\subset B_R.
\]
For $|Z|\ge R$, take $t=R/|Z|\le1$. Convexity gives $1\le\widetilde v_{1,h}(tZ)\le t\widetilde v_{1,h}(Z)$.
Hence
\[
\widetilde v_{1,h}(Z)\ge |Z|/R-1\qquad(Z\in\R^n).
\]
Consequently, for every $\xi\in\R^n$,
\[
\widetilde v_{1,h}(Z_0+\xi)+\widetilde v_{1,h}(Z_0-\xi)
\ge\frac{|Z_0+\xi|+|Z_0-\xi|}{R}-2
\ge\frac{2|\xi|}{R}-2.
\]
This verifies \eqref{symmetricgrowth} at the prescribed center $Z_0$
with $c=2/(2n+\sqrt2)$ and $C=2$, uniformly in $x_0$ and $h$.
Apply Lemma~\ref{selectionlemma} to $v_1$ with
$Q_i=G_{h_i}(x_i)$. Since $r_1=1$ and
$\rho<1/20$ by \eqref{rhochoice},
\[
Q_i=z_i+\widehat T_iB_\rho(Z_i)
\subset z_i+\widehat T_iB_{r_1/20}(Z_i),
\]
where $z_i=x_i+Du(x_i)$ and
$Z_i=\widehat T_i^{-1}(b_i-Du(x_i))$.
In each height range $h/2<h_i\le h$, the lemma selects
a countable subfamily, indexed by $j$, such that
\[
\sum_j\widetilde\nu_\epsilon(G_{h_j}(x_j)) \le C\widetilde\nu_\epsilon(\R^n)\quad\text{and}\quad \{x_i:h/2<h_i\le h\} \subset\bigcup_j \big(x_j+Ch_j\widehat T_j^{-\mathsf{T}}B_1\big).
\]

Let $\nu_0$ be the measure defined in \eqref{weightedmeasures}.
Using \eqref{regularizedcost} and \eqref{graphset}, we replace \eqref{massratio} by
\begin{equation}\label{graphmassratio}
\Theta_h(x)= \frac{\nu_0(\Omega_h^*(x))+\widetilde\mu_\epsilon(\Omega_h(x))} {h^\epsilon\cc(x,h)} +\frac{\widetilde\nu_\epsilon(G_h(x))} {h^\epsilon\cc'(x,h)}.
\end{equation}
Define $\mathcal P_+$ and $\mathcal P_0$ by \eqref{masspartition} using this function. Since $a_i'\ge\sqrt h$, each semiaxis of $h\widehat T_h^{-\mathsf{T}}B_1$ is at most $\sqrt h$. The finiteness in Lemma~\ref{graphfiniteness}, the selection just described, and Lemma~\ref{ellipsoidcover} therefore yield the same covering estimate for the third term. 
The proof of Proposition~\ref{positivemass}, now with three alternatives instead of two, shows
\[
\dim_{\mathrm H}\mathcal P_+\le n-1+2\epsilon.
\]

\subsection{Limits of normalized sections}\label{higherlimit}
{~}

Let $x_j\in\mathcal P$, $y_j=Du(x_j)$ and $h_j\downarrow0$.
Write $C_{\mathrm{cov},j}=C_{\mathrm{cov}}(x_j,h_j)$ and
$C_{\mathrm{cov},j}'=C_{\mathrm{cov}}'(x_j,h_j)$,
with the covering costs defined in
\eqref{ellipsoids} and \eqref{regularizedcost}, respectively.
Let $b_j+T_jB_1$ be a normalizing ellipsoid of
$S_{h_j}[v](y_j,x_j)$ as in \eqref{normalization}, with ordered
semiaxis lengths $a_{1,j}\ge\cdots\ge a_{n,j}>0$.
Recall that $\beta_j=\|T_j\|=a_{1,j}$.
Denote by $\tilde v_j$ the normalized target potential and by $\tilde u_j$ its Legendre transform defined in \eqref{potentials}.
For all sufficiently large $j$, \eqref{containment} and \eqref{fixeddual}
allow us to use the source normalization \eqref{sourcenormalization}
with $v^*=u$ for $X\in B_{2\rho}$.
Let $F_j(Y)=F(y_j+T_jY)$. Define
\begin{equation}\label{axisratios}
\sigma_{i,j}=a_{i,j}^2/h_j\quad\text{and}\quad \kappa_j=(\det T_j)^2/h_j^n=\prod_{i=1}^n\sigma_{i,j}.
\end{equation}
Thus $\kappa_j$ is the scaling factor for the Monge-Amp\`ere density under the normalization \eqref{potentials}. Choose orthonormal bases in the domain and range of $T_j$ in which $T_j$ is diagonal, and pass to a subsequence on which these bases converge. After passing to a further subsequence, $\tilde{u}_j\to \tilde{u}$ locally uniformly on ${B_{2\rho}}$, by the compactness argument in Step 1 of the proof of Proposition~\ref{planarclassification}. 
By Lemma~\ref{taylorlimit}, after passing to a further subsequence, the convergence~\eqref{polynomiallimit} holds for a nonzero polynomial $P$ of degree at most $d_0$.

\begin{proposition}\label{classification}
Suppose $\Theta_{h_j}(x_j)\to0$, with $\Theta_h$ given by \eqref{graphmassratio}. After a subsequence,
\[
T_j/\beta_j\to e^*\otimes e \quad\text{and}\quad |e^*|=|e|=1.
\]
At least one of the following holds:

\begin{enumerate}
\item $P(Y)=p(e\cdot Y)$ for a nonzero polynomial $p$ of degree at most $d_0$.
\item $\#\{D_e\tilde{u}(X):X\in B_\rho,\ \tilde{u}\text{ is differentiable at }X\}\le d_0$.
\end{enumerate}

\end{proposition}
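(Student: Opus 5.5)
We mirror the planar argument of Proposition~\ref{planarclassification}, with the three terms in \eqref{graphmassratio} playing separate roles. First, the target and source terms give lower bounds exactly as in \eqref{targetmass} and \eqref{sourcemass}:
\[
\nu_0(\Omega_{h_j}^*(x_j))\ge c\gamma_j^{-1+\epsilon}|\det T_j|,\qquad
\widetilde\mu_\epsilon(\Omega_{h_j}(x_j))\ \text{or}\ \mu_0(\Omega_{h_j}(x_j))\ge c\gamma_j^{-1+\epsilon}\frac{h_j^n}{|\det T_j|}.
\]
For the source bound with $\widetilde\mu_\epsilon$ in place of $\mu_0$, we must check that the pushforward dominates the Lebesgue weight: on regular points $K=(I+D^2u)^{-1}D^2u$ in the $x$-frame, so the density of $\widetilde\mu_\epsilon$ involves $DF^{\mathsf T}KDF$. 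Here we would split: either this quadratic form is bounded below on a large portion of the normalized source ball, giving a usable bound, or it is small, which is itself information. Multiplying the two bounds and dividing by $h_j^{2\epsilon}\cc(x_j,h_j)^2=h_j^{2\epsilon}\beta_j^2h_j^{2n-2}/|\det T_j|^2$ yields, as in \eqref{massproduct}, $(\gamma_j/h_j)^{2\epsilon}\,h_j^2/(\beta_j\gamma_j)^2\cdot|\det T_j|^2/h_j^n$-type quantities tending to zero. With $a_{1,j}=\beta_j$ and $\gamma_j\le C\beta_j$, this should force $a_{n,j}/\beta_j\to0$, and with some bookkeeping even $a_{2,j}\cdots a_{n,j}/(\gamma_j\beta_j^{n-2})\to0$. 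This shows that at least one transverse axis collapses, but not yet that $T_j/\beta_j$ has rank one.

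To get rank one, we use the graph term. In the normalized graph coordinates $z=x_j+b_j+\widehat T_jZ$ on $G_{h_j}(x_j)$, the density $DF(y)^{\mathsf T}KDF(y)$ rescales to $\gamma_j^2\,DP_j^{\mathsf T}\widehat K_jDP_j$ divided by the squares of the axes. Here $\widehat K_j$ is the normalized Jacobian of $Z\mapsto Y$, between $0$ and $I$ after the $\sigma_{i,j}$ scaling in \eqref{axisratios}. Thus
\[
\widetilde\nu_\epsilon(G_{h_j}(x_j))\gtrsim\gamma_j^{1+\epsilon}|\det\widehat T_j|\int_{B_\rho}|\widehat T_j^{-1}\text{-weighted }DP_j|^2 .
\]
Dividing by $h_j^\epsilon\cc'$ and using $\Theta\to0$ forces $\gamma_j\,|D_{e_{i,j}}P_j|/a'_{i,j}\to0$ in an averaged sense along every direction $i\ge2$. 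Splitting indices by whether $a_{i,j}/\beta_j\to0$, we then have two possibilities. If all transverse ratios $a_{i,j}/\beta_j$ vanish, we obtain $T_j/\beta_j\to e^*\otimes e$, and the chain-rule estimate of Step~3 of Proposition~\ref{planarclassification} gives $D_{e_i}P=0$ for $i\ge2$, that is, alternative~(1). Otherwise, some transverse direction survives and the graph term kills $DP$ along it; iterating this along the surviving directions again yields $P=p(e\cdot Y)$.

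Alternative~(2) arises when the graph measure cannot see $P$ because $K$ degenerates. This means the normalized subgradient graph of $\tilde u_j$ becomes vertical (flat in $Y$) in the $e$-direction, so $D_e\tilde u_j$ concentrates near zeros of the limiting boundary polynomial restricted to the line. Passing to the limit via the Alexandrov approximation and the $C^2$ convergence \eqref{polynomiallimit}, $D_e\tilde u$ then takes values among at most $d_0$ roots.

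The main obstacle is making the graph-measure lower bound rigorous uniformly in $j$. This requires controlling $\widehat K_j$ from below on a set of positive normalized measure, or otherwise extracting the finite-value alternative. We would attempt this by a Fubini argument along lines parallel to $e$ in $Z$-space, combined with the monotonicity of $Z\mapsto Y(Z)$.
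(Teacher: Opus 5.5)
There is a genuine gap, and it sits at the step you flag yourself. The missing idea is that no lower bound on $K$ is needed. Bounding $\gamma_j^2DP_j^{\mathsf T}\widehat K_jDP_j$ below by a weighted $|DP_j|^2$ requires $\widehat K_j\gtrsim I$. That is not available, since $K=(I+D^2v)^{-1}$ degenerates wherever $D^2v$ is large. The paper instead uses $0\le K\le I$ to get $|D_z(F\circ y)|^2=|KDF(y)|^2\le DF(y)^{\mathsf T}KDF(y)$. Hence $\widetilde\nu_\epsilon(G_{h_j}(x_j))\ge c\gamma_j^{-1+\epsilon}\int_{G_{h_j}(x_j)}|D_z(F\circ y)|^2\,dz$. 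This controls the Dirichlet energy of the composite $\widehat P_j(Z)=P_j(Y_j(Z))$, where $Y_j(Z)=T_j^{-1}(y(z)-y_j)$ and $z=x_j+b_j+\widehat T_jZ$; multiplied by the target term, it gives \eqref{graphenergy}.

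Because this controls $\widehat P_j$ rather than $DP_j$ at identified points, you must also identify the limit of $\widehat P_j$. The bound $|x(z)-x_j|\le C\sqrt{h_j}$ gives $[Y_j(Z)]_i\to[Y^0]_i+Z_i$ only in directions with $\sigma_{i,j}=a_{i,j}^2/h_j\to\infty$. So the graph term yields $\partial_lP\equiv0$ for $l\ne i_*$ (the graph identity) only once $P$ is known not to depend on the coordinates with bounded $\sigma_{i,j}$. The relevant split of indices is therefore $\sigma_{i,j}\to\infty$ versus bounded, not $a_{i,j}/\beta_j\to0$.

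On the source side, your crude lower bound for $\widetilde\mu_\epsilon$ is actually true: $\cof(I+D^2v)\ge I$ and $|DF|\ge1$, so no case split is needed. For $n\ge3$, however, it is too weak. The paper restricts to $\mathcal Q_j(M)=\{D^2\tilde u_j\le MI\}$, which has nearly full measure by the Laplacian bound. There, in the principal frame, $\cof(I+D^2v)\ge c_M\diag\bigl(\prod_{i\ne l}(1+\sigma_{i,j}^{-1})\bigr)_{l=1}^n$, and this gives the weighted estimate \eqref{sourceenergy}. From it one gets $\partial_lP(D\tilde u)=0$ a.e.\ for $l\ne i_*$, where $a_{i_*,j}\approx\beta_j$ (the source identity). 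Using $n\ge3$, $|DF|\ge1$ and $a_{n,j}\ge ch_j$, one also gets $a_{2,j}\sqrt{h_j}/\gamma_j\to0$.

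Your case analysis also does not go through.

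\emph{Alternative (1) via the planar chain rule.} The chain rule gives $|DP_j\cdot e_{i,j}|\le Ca_{i,j}/\gamma_j$, so it needs $a_{i,j}/\gamma_j\to0$. This does not follow from $a_{i,j}/\beta_j\to0$, because only $ca_{\min}\le\gamma_j\le C\beta_j$ is known. In $n\ge3$ what one extracts is the weaker $a_{2,j}\sqrt{h_j}/\gamma_j\to0$. That bound controls mixed derivatives: $|\partial_l\partial_kP_j|\le Ca_{k,j}a_{l,j}/\gamma_j\to0$ for $l\ge2$ and $\sigma_{k,j}$ bounded. Hence $P=p_0(Y_1,\dots,Y_m)+\sum_{k>m}p_k(Y_1)Y_k$.

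\emph{Rank one.} This must come from a contradiction, which your plan lacks. If $a_{2,j}\ge c\beta_j$, then $\beta_j\sqrt{h_j}/\gamma_j\to0$, which makes each $p_k$ constant, hence zero by the source identity. The graph identity then holds with both $i_*=1$ and $i_*=2$, forcing $DP\equiv0$. This contradicts $P(0)=0$ and $\|P\|_{L^\infty(B_1)}=1$. Your \emph{otherwise} branch instead concludes $P=p(e\cdot Y)$, which leaves $T_j/\beta_j$ of rank at least two and so does not prove the statement.

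\emph{Alternative (2).} This is exactly the case of some nonzero $p_k$, $k>m$. The source identity with $i_*=1$ gives $p_k(e\cdot D\tilde u)=0$ a.e., hence at every differentiability point. So $D_e\tilde u$ takes values among the real roots of $p_k$. Your heuristic (degenerate $K$, a vertical graph, $D_e\tilde u_j$ concentrating near zeros of $P$ on the line) is not an argument, and it involves the wrong polynomial. When all $p_k$ vanish, the graph identity with $i_*=1$ gives (1).
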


\begin{proof}
Write $\Theta_j=\Theta_{h_j}(x_j)$. We work in the converging orthonormal bases chosen above.

\smallskip\noindent \emph{Step 1. Source estimate.} 
Estimates \eqref{axisbounds}, \eqref{determinant} and \eqref{amplitude} show 
\begin{equation}\label{p6.2:e0}
\kappa_j\ge c,\quad a_{n,j}\ge ch_j\quad \text{and} \quad ch_j\le\gamma_j\le C\beta_j.
\end{equation}
By \eqref{ellipsoids}, the estimate~\eqref{targetmass}, valid in all dimensions, and \eqref{axisratios}, we obtain
\begin{equation}\label{targetbound}
\frac{\nu_0(\Omega_{h_j}^*(x_j))}{h_j^\epsilon C_{\mathrm{cov},j}}
 \ge c\left(\frac{\gamma_j}{h_j}\right)^\epsilon \frac{\kappa_jh_j}{\beta_j\gamma_j}.
\end{equation}
By \eqref{p6.2:e0}, the right-hand side is at least $ch_j/\beta_j^2=c/\sigma_{1,j}$. 
As $\Theta_{h_j}(x_j)\to0$,   we find that
\begin{equation}\label{largestaxisdivergence}
\sigma_{1,j}\to\infty.
\end{equation}

Almost every $x\in\Omega_{h_j}(x_j)$ belongs to the regular set $\Omega'$ of Lemma~\ref{regularset}. 
At such $x$, we have $D^2u(x)=[D^2v(y)]^{-1}$ with $y=Du(x)$. 
By \eqref{s6:e0} and \eqref{s6:e1}, $z=y+Dv(y)$.
Differentiating this identity and using $K=D_zy$ as in \eqref{graphderivatives}, 
we obtain 
\[
K=(I+D^2v(y))^{-1}\quad\text{and}\quad dz=\det(I+D^2v(y))\,dy.
\] 
Changing variables in~\eqref{graphmeasure} and using~\eqref{dualequation}, we obtain
\begin{equation}\label{sourcegraphmass}
\widetilde\mu_\epsilon(\Omega_{h_j}(x_j))\ge 
\lambda/\Lambda\int_{\Omega_{h_j}(x_j)} F(Du)^{-1+\epsilon}DF(Du)^{\mathsf{T}}\cof(I+D^2v(y))DF(Du)\,dx,
\end{equation}
where $\cof A=(\det A)A^{-1}$ denotes the cofactor matrix. For $M\ge1$, let
\[
\mathcal Q_j(M)=\{X\in{B_\rho}:D^2\tilde{u}_j(X)\le MI\}.
\]
Choose $\chi\in C_c^\infty({B_{2\rho}})$ with $\chi\ge0$ and $\chi=1$ on ${B_\rho}$. The uniform bounds for $\tilde{u}_j$ imply
\begin{equation*}
\int_{{B_\rho}}\tr D^2\tilde{u}_j\,dX \le\int\chi\,d(\Delta \tilde{u}_j)=\int \tilde{u}_j\Delta\chi\,dX\le C.
\end{equation*}
It follows that
\begin{equation}\label{hessianbound}
|B_\rho\setminus\mathcal Q_j(M)|\le C/M.
\end{equation}
For all sufficiently large $j$ and almost every $X\in\mathcal Q_j(M)$, write
\begin{equation}\label{p6.2:e1}
x=x_j+h_jT_j^{-\mathsf{T}}X.
\end{equation}
Then $D^2u(x)\le Mh_j^{-1}T_jT_j^{\mathsf{T}}$. Thus, for $y=Du(x)$ at such $x$,
\[
D^2v(y)\ge M^{-1}h_j(T_jT_j^{\mathsf{T}})^{-1}.
\]
Therefore, in the principal-axis coordinates of the target ellipsoid,
\begin{equation}\label{p6.2:e2}
\cof(I+D^2v(y))\ge c_M\operatorname{diag}\Big(\prod_{i\ne l}(1+\sigma_{i,j}^{-1})\Big)_{l=1}^n.
\end{equation}
At these points, 
\begin{equation}\label{p6.2:e3}
0<F(Du(x))\le C\gamma_j\quad\text{and}\quad dx=h_j^n|\det T_j|^{-1}\,dX.
\end{equation}
Recall that $P_j(Y) = \gamma_j^{-1} F(y_j+T_jY)$ in \eqref{polynomiallimit}. 
Differentiating it and using \eqref{p5.1:tt} and \eqref{p6.2:e1},
\begin{equation}\label{energyidentities}
[DF(Du(x))]_l=\frac{\gamma_j}{a_{l,j}}\partial_lP_j(D\tilde{u}_j(X)).
\end{equation}
Restricting the integral in \eqref{sourcegraphmass} to the points corresponding to $\mathcal Q_j(M)$ and using \eqref{p6.2:e2}, \eqref{p6.2:e3} and \eqref{energyidentities}, we obtain
\begin{align}
\widetilde\mu_\epsilon(\Omega_{h_j}(x_j))\notag
&\ge c_M\gamma_j^{1+\epsilon}\frac{h_j^n}{|\det T_j|}
\int_{\mathcal Q_j(M)}\sum_{l=1}^n a_{l,j}^{-2}
|\partial_lP_j(D\tilde u_j(X))|^2
\prod_{i\ne l}(1+\sigma_{i,j}^{-1})\,dX\notag\\
&\quad= c_M\gamma_j^{1+\epsilon}\frac{C_{\mathrm{cov},j}}{\kappa_j\beta_j}
\int_{\mathcal Q_j(M)}\sum_{l=1}^n
|\partial_lP_j(D\tilde u_j(X))|^2
\prod_{i\ne l}(1+\sigma_{i,j})\,dX.\notag
\end{align}
Dividing by $h_j^\epsilon C_{\mathrm{cov},j}$ and multiplying by \eqref{targetbound}, 
we use $\gamma_j/h_j\ge c$ to obtain
\begin{equation}\label{sourceenergy}
\int_{\mathcal Q_j(M)}\sum_{l=1}^n \frac{\prod_{i\ne l}(1+\sigma_{i,j})}{\sigma_{1,j}} |\partial_lP_j(D\tilde{u}_j)|^2\,dX\le C_M\Theta_j^2.
\end{equation}

\smallskip\noindent \emph{Step 2. Boundary polynomial.} 
Fix an index $i_*$ with $a_{i_*,j}\approx\beta_j$. 
For $l\ne i_*$, the coefficient of $|\partial_lP_j(D\tilde u_j)|^2$ in \eqref{sourceenergy} is bounded from below by a positive constant.
It then follows from \eqref{sourceenergy} and \eqref{hessianbound} that
\[
\int_{{B_\rho}}|\partial_lP_j(D\tilde{u}_j)|^2\,dX \le C_M\Theta_j^2+C/M.
\]
Sending $j\to\infty$, $M\to\infty$ and using $D\tilde{u}_j\to D\tilde{u}$ almost everywhere, we obtain
\begin{equation}\label{transverse}
\partial_lP(D\tilde{u}(X))=0\quad\text{for a.e. }X\in B_\rho \quad(l\ne i_*,\ a_{i_*,j}\approx\beta_j).
\end{equation}

Since $\sigma_{l,j}\ge\sigma_{n,j}$ and $\sigma\mapsto\sigma/(1+\sigma)$ is increasing on $(0,\infty)$,
\[
a_{l,j}^2\prod_{i\ne l}(1+\sigma_{i,j})
=\frac{h_j\sigma_{l,j}}{1+\sigma_{l,j}}\prod_{i=1}^n(1+\sigma_{i,j})
\ge \frac{h_j\sigma_{n,j}}{1+\sigma_{n,j}}\prod_{i=1}^n(1+\sigma_{i,j})
=a_{n,j}^2\prod_{i<n}(1+\sigma_{i,j}).
\]
Since $|DF|\ge1$, we conclude by \eqref{energyidentities} that
\begin{equation}\label{p6.2:t1}
\sum_{l=1}^n\frac{\prod_{i\ne l}(1+\sigma_{i,j})}{\sigma_{1,j}} |\partial_lP_j(D\tilde{u}_j)|^2 \ge\frac{a_{n,j}^2}{\gamma_j^2\sigma_{1,j}} \prod_{i<n}(1+\sigma_{i,j}) \ge\frac{a_{n,j}^2a_{2,j}^2}{h_j\gamma_j^2}.
\end{equation}
Here the last inequality uses $n\ge3$ to retain both $\sigma_{1,j}$ and $\sigma_{2,j}$ in the product. By \eqref{hessianbound}, fix $M$ with $|\mathcal Q_j(M)|\ge|{B_\rho}|/2$. Since $a_{n,j}\ge ch_j$, \eqref{sourceenergy} and \eqref{p6.2:t1} imply
\begin{equation}\label{axisproduct}
\frac{a_{2,j}\sqrt{h_j}}{\gamma_j}\le C\Theta_j\to0.
\end{equation}
By \eqref{largestaxisdivergence} and the ordering of the $\sigma_{i,j}$, after passing to a subsequence there is $m\in\{1,\ldots,n\}$,
\[
\sigma_{i,j}\to\infty\ \text{for}\ i\le m \quad \text{and}\quad  \sigma_{i,j} \ \text{is bounded for} \ i>m.
\]
For $k>m$ and $l\ge2$, we have $a_{k,j}\le C\sqrt{h_j}$ and $a_{l,j}\le a_{2,j}$. 
The uniform $C^2$ bound for $F$ and \eqref{axisproduct} therefore imply, on bounded sets,
\begin{equation}\label{mixedderivatives}
|\partial_l\partial_kP_j| \le C\frac{a_{k,j}a_{l,j}}{\gamma_j} \le C\frac{a_{2,j}\sqrt{h_j}}{\gamma_j}\to0 \quad (k> m, \ l\ge2).
\end{equation}
Thus $\partial_kP$ depends only on $Y_1$ for each $k>m$, and
\begin{equation}\label{polynomialform}
P(Y)=p_0(Y_1,\ldots,Y_m) +\sum_{k=m+1}^np_k(Y_1)Y_k.
\end{equation}
Here $p_0(Y_1,\ldots,Y_m)=P(Y_1,\ldots,Y_m,0,\ldots,0)$ is a polynomial of degree at most $d_0$ with $p_0(0)=0$;
and $p_k$ is the polynomial defined by $\partial_kP(Y)=p_k(Y_1)$, of degree at most $d_0-1$.

\vskip5pt
\noindent \emph{Step 3. Graph estimate.} 
Let $b_j$ be the target ellipsoid center and take $\widehat T_j$ from \eqref{regularizedmatrix} at $h_j$. 
Set
\begin{align}\label{graphcoordinates}
z=x_j+b_j+\widehat T_jZ\quad \text{and}\quad Y_j(Z)=T_j^{-1}(y(z)-y_j),\quad  (Z\in B_\rho) 
\end{align}
and
\[
\widehat P_j(Z)=P_j(Y_j(Z)),\quad  (Z\in B_\rho).
\]
Applying \eqref{regularizedidentity} with $(y_0,x_0)=(y_j,x_j)$ and evaluating at the minimizer, we have
\begin{align}
v_1(z)-v_1(x_j+y_j)-x_j\cdot(z-x_j-y_j)
&=v(y(z))-v(y_j)-x_j\cdot(y(z)-y_j)
+\tfrac12|x(z)-x_j|^2\notag\\
&=h_j\tilde{v}_j(Y_j(Z))+\tfrac12|x(z)-x_j|^2.
\label{graphidentity}
\end{align}
Here $Y_j(Z)$ is defined in \eqref{graphcoordinates}.
The two terms on the right are nonnegative,
and by \eqref{graphinterior} and \eqref{graphcoordinates}, each term is bounded by $3h_j/2$. Therefore
\begin{equation}\label{graphbounds}
|x(z)-x_j|\le C\sqrt{h_j}\quad\text{and}\quad \tilde{v}_j(Y_j(Z))\le3/2.
\end{equation}
Estimate~\eqref{raybound} then bounds $Y_j(Z)$ uniformly, thus $|\widehat P_j(Z)|\le C$ on $B_\rho$. 
By \eqref{graphderivatives},
\[
|D_z(F\circ y)|^2=|KDF(y)|^2\le DF(y)^{\mathsf{T}}KDF(y).
\]
The derivative on the left is zero almost everywhere on $\{F\circ y=0\}$. 
On its complement, $|F\circ y|\le C\gamma_j$. 
Consequently \eqref{graphmeasure} controls the integral over the whole ball:
\begin{align*}
\widetilde\nu_\epsilon(G_{h_j}(x_j))
\ge c\gamma_j^{-1+\epsilon}\int_{G_{h_j}(x_j)}|D_z(F\circ y)|^2\,dz
=c\gamma_j^{1+\epsilon}|\det \widehat T_j|\int_{B_\rho}\sum_{l=1}^n\frac{|\partial_l\widehat P_j|^2}{(a_{l,j}')^2}\,dZ.
\end{align*}
Dividing by $h_j^\epsilon C_{\mathrm{cov},j}'$ and multiplying by \eqref{targetbound}, we obtain
\begin{align}
\frac{\widetilde\nu_\epsilon(G_{h_j}(x_j))}{h_j^\epsilon C_{\mathrm{cov},j}'}
\frac{\nu_0(\Omega_{h_j}^*(x_j))}{h_j^\epsilon C_{\mathrm{cov},j}}
&\ge c\left(\frac{\gamma_j}{h_j}\right)^{2\epsilon}
\frac{\kappa_jh_j|\det\widehat T_j|}{\beta_j C_{\mathrm{cov},j}'}
\int_{B_\rho}\sum_{l=1}^n\frac{|\partial_l\widehat P_j|^2}{(a_{l,j}')^2}\,dZ\notag\\
&=c\left(\frac{\gamma_j}{h_j}\right)^{2\epsilon}
\frac{\kappa_jh_j}{\beta_j\beta_j'}
\int_{B_\rho}\sum_{l=1}^n\prod_{i\ne l}(1+\sigma_{i,j})|\partial_l\widehat P_j|^2\,dZ,
\label{graphproduct}
\end{align}
where we used $C_{\mathrm{cov},j}'=\beta_j'h_j^{n-1}/|\det\widehat T_j|$ and $(a_{l,j}')^2=h_j(1+\sigma_{l,j})$.
By \eqref{largestaxisdivergence}
\[
\beta_j'/\beta_j=\sqrt{1+\sigma_{1,j}^{-1}}\to1.
\] 
This together with $\frac{h_j}{\beta_j\beta_j'}=\frac{1}{\sigma_{1,j}}\frac{\beta_j}{\beta_j'}$
implies the coefficient of $|\partial_l\widehat P_j|^2$ is, up to fixed factors,
\[
\left(\frac{\gamma_j}{h_j}\right)^{2\epsilon} \frac{\kappa_j}{\sigma_{1,j}}\prod_{i\ne l}(1+\sigma_{i,j}).
\]
Using $\kappa_j\ge c$ and $\gamma_j/h_j\ge c$ from \eqref{p6.2:e0}, together with \eqref{graphproduct} and \eqref{graphmassratio}, we obtain
\begin{equation}\label{graphenergy}
\int_{B_\rho}\sum_{l=1}^n \frac{\prod_{i\ne l}(1+\sigma_{i,j})}{\sigma_{1,j}} |\partial_l\widehat P_j|^2\,dZ\le C\Theta_j^2.
\end{equation}
In particular, $\partial_l\widehat P_j\to0$ in $L^2$ whenever $l\ne i_*$ and $a_{i_*,j}\approx\beta_j$.

Let $Y_j^0=T_j^{-1}(b_j-y_j)\to Y^0$ after a subsequence. 
We infer by \eqref{graphcoordinates}
\begin{equation}\label{graphcoordinateconvergence}
[Y_j(Z)]_i=[Y_j^0]_i+ \frac{a_{i,j}'}{a_{i,j}}Z_i -\frac{[x(z)-x_j]_i}{a_{i,j}} \to[Y^0]_i+Z_i\quad\text{for} \ i=1,\ldots,m,
\end{equation}
uniformly, using the displacement bound in \eqref{graphbounds}, $a_{i,j}'/a_{i,j}\to1$, and $\sqrt{h_j}/a_{i,j}\to0$. 
If all $p_k$ with $k>m$ vanish in \eqref{polynomialform}, the coordinate convergence~\eqref{graphcoordinateconvergence}, the uniform boundedness of $Y_j$ and \eqref{polynomiallimit} imply that $\widehat P_j\to P(Y^0+Z)$ uniformly on $B_\rho$.
Passing to weak derivatives in \eqref{graphenergy} then yields
\begin{equation}\label{graphdirections}
\partial_lP\equiv0\quad(l\ne i_*,\ a_{i_*,j}\approx\beta_j),
\end{equation}
provided that $p_k\equiv0$ for every $k>m$ in \eqref{polynomialform}.
Unlike \eqref{transverse}, this identity is global but requires the extra condition above.

\smallskip\noindent \emph{Step 4. Classification.} 
Suppose $a_{2,j}\ge c\beta_j$ along a subsequence. Then, by \eqref{axisproduct},
\[
\beta_j\sqrt{h_j}/\gamma_j\to0.
\]
For $k>m$, the argument for \eqref{mixedderivatives} therefore also applies with $l=1$. As a result, 
\begin{equation*}
p_k(Y_1)=\partial_kP(Y)\equiv \text{const.},\qquad k=m+1,\ldots,n.
\end{equation*}
Applying \eqref{transverse} with $i_*=1$ makes $p_k\equiv0$. 
Since now $i_*=1,2$, we further deduce by \eqref{graphdirections} that $DP(Y)\equiv0$.
This contradicts $P(0)=0$ and $P\not\equiv0$. 
Hence 
\[
\frac{a_{2,j}}{\beta_j}\to0.
\]
As in Step 2 of the proof of Proposition~\ref{planarclassification}, 
since $a_{i,j}\le a_{2,j}$ for $i\ge2$, we have $T_j/\beta_j\to e^*\otimes e$, with $|e|=|e^*|=1$.

If some $p_k$ with $k>m$ is nonzero, then \eqref{transverse} and \eqref{polynomialform} imply
$p_k(e\cdot D\tilde{u})=0$ almost everywhere. By continuity of the gradient on the differentiability set of a convex function, this identity holds at every differentiability point. The real roots of $p_k$ give alternative (2).

Otherwise all these $p_k$ vanish.
By virtue of \eqref{graphdirections} with $i_*=1$,
we find $\partial_lP\equiv0$ for every $l\ge2$. 
As in Step 3 of the proof of Proposition~\ref{planarclassification}, 
this yields $P(Y)=p(e\cdot Y)$, where $p(t)=P(te)$ is nonzero and has degree at most $d_0$, proving alternative (1).
\end{proof}

\begin{proof}[Proof of Proposition~\ref{boundarydimension}]
Fix $\delta>0$. 
Take $0<\alpha<\min\{1,\delta\}$, $0<\epsilon<\min\{1,\delta/2\}$ and $N=2/\alpha$. 
By Lemma~\ref{growth}, $\dim_{\mathrm H}(E\cap Z_\alpha)\le n-1+\alpha<n-1+\delta$. By Lemmas~\ref{growth} and~\ref{duallocalization}, the remaining set~$E\setminus Z_\alpha$ admits a countable cover by sets~$\mathcal P$ as in Definition~\ref{preparedset}. The section and Taylor estimates in Lemmas~\ref{sectionbounds} and~\ref{taylorlimit} are uniform on each such set.

On each subset, we split $\mathcal P=\mathcal P_+\cup\mathcal P_0$. 
The selection argument of Proposition \ref{positivemass} shows $\dim_{\mathrm H}\mathcal P_+\le n-1+2\epsilon<n-1+\delta$. Proposition~\ref{planarclassification} for $n=2$ and Proposition~\ref{classification} for $n\ge3$ verify the hypotheses of Proposition~\ref{refinement}. Proposition~\ref{zeromass} therefore gives $\dim_{\mathrm H}\mathcal P_0\le n-1$. 
Countable stability and then $\delta\downarrow0$ yield $\dim_{\mathrm H}E\le n-1$.
\end{proof}

\begin{proof}[Proof of Theorem~\ref{main}]
By \eqref{decomposition} and Lemma~\ref{regularset}, $\Sigma=A\cup E$ and $\dim_{\mathrm H}A\le n-1$. Proposition~\ref{boundarydimension} gives $\dim_{\mathrm H}E\le n-1$. The assertion follows.
\end{proof}

\section*{Acknowledgement}
The AI tool ChatGPT (OpenAI) was used in the
preparation of this manuscript. The authors have reviewed all AI-assisted
content and take full responsibility for the final manuscript.

\end{document}